\numberwithin{equation}{section}
\Crefname{paragraph}{Section}{Sections}
\newcommand\inter[1]{\left\llbracket #1\right\rrbracket}
\DeclareMathOperator*{\esssup}{ess\,sup}
\newcommand{\dive}[1]{\mathrm{div}}
\newcommand{\TT}{\mathbb T^d}
\newcommand{\dif}[1]{\partial^{#1}}
\DeclareMathOperator{\spn}{span}
\providecommand{\keywords}[1]{\noindent {\textit{Keywords:}} #1}
\theoremstyle{plain} 
\newtheorem{prop}{Proposition}[section] 
\newtheorem{theo}[prop]{Theorem}
\newtheorem{lem}[prop]{Lemma}
\newtheorem{cor}[prop]{Corollary}
\theoremstyle{definition}
\newtheorem{defi}[prop]{Definition}
\newtheorem{rmk}[prop]{Remark}
\def\dx{\,\textnormal{d}x}
\def\dt{\textnormal{d}t}
\def\d{\textnormal{d}}
\newcommand{\vertiii}[1]{{\left\vert\kern-0.25ex\left\vert\kern-0.25ex\left\vert #1 
    \right\vert\kern-0.25ex\right\vert\kern-0.25ex\right\vert}}
\let\original@addcontentsline\addcontentsline
\newcommand{\dummy@addcontentsline}[3]{}
\newcommand{\DeactivateToc}{\let\addcontentsline\dummy@addcontentsline}
\newcommand{\ActivateToc}{\let\addcontentsline\original@addcontentsline}
\newcommand{\norm}[1]{\left\|#1\right\|}
\begin{document}

\title{Global controllability of the  Cahn-Hilliard equation\thanks{This work has received support from UNAM-DGAPA-PAPIIT grant IN117525 (Mexico).}}
\author{V\'ictor Hern\'andez-Santamar\'ia\thanks{V. Hern\'andez-Santamar\'ia is supported by the Project CBF2023-2024-116 of SECIHTHI and by UNAM-DGAPA-PAPIIT grants IA100324 and IN102925 (Mexico).} \and Subrata Majumdar\thanks{Subrata Majumdar is supported by the UNAM Postdoctoral Program (POSDOC).} \and Luz de Teresa}

\maketitle

\begin{abstract}
This paper investigates the global controllability properties of the Cahn--Hilliard equation posed on the $d$-dimensional flat torus $\mathbb{T}^d$.
We first establish small-time global approximate controllability of the system by means of controls acting on finitely many Fourier modes, relying on techniques inspired by geometric control theory.
We then prove null controllability of the linearized equation using a spatially localized control supported on an arbitrary measurable subset of positive Lebesgue measure, based on quantitative propagation of smallness estimates for the free dynamics.
For dimensions $d \in \{1,2,3\}$, we further derive local null controllability for the full nonlinear system via a fixed-point argument.
By combining these results, we establish global null controllability of the Cahn--Hilliard equation.
This work provides the first result on global controllability for this equation, achieved through a two-stage strategy in which the control is first localized in Fourier space and subsequently restricted on a set of positive measure.
\end{abstract}

\keywords{Global controllability, Geometric control theory, Agrachev-Sarychev method, Observability from measurable sets, Propagation of smallness, Lebeau-Robbiano approach, Source term method.} 

\medskip
\noindent
\textbf{2020 MSC:} {93B05, 93B07, 93C20, 93C10, 35K55}
\footnotesize
\tableofcontents
\normalsize

\section{Introduction}
\subsection{Motivation}

The Cahn–Hilliard equation is a fundamental model for describing phase separation in binary mixtures and other segregation-driven processes. Originally introduced by Cahn and Hilliard \cite{CH58,Cah61}, it provides a thermodynamically consistent description of spinodal decomposition, a mechanism by which an initially homogeneous material becomes unstable and separates into distinct phases. Beyond this classical setting, this equation is  also used to describe various pattern-formation phenomena, including nucleation and growth  \cite{CH59,BF93}. Over the years, its range of applications has broadened substantially, with Cahn–Hilliard type models appearing in contexts as diverse as copolymer morphology \cite{OK86}, image inpainting \cite{BEG07}, tumor-growth modelling \cite{OTP10,GLKSS16}, biological applications \cite{DLCRW09,Dua16}, two-phase fluid flows \cite{LS03,AGG12,LW18}, interacting populations \cite{CM81}, astronomical structures \cite{Tre03}, and thin-film dynamics \cite{TK04}. This diversity of applications has established the Cahn–Hilliard equation as a central tool for studying segregation-like behaviour across disciplines. General overviews of its modelling foundations and mathematical theory can be found in the surveys \cite{Nov08} and \cite{Mir19}.

In its classical form, the Cahn--Hilliard equation describes the evolution of an 
order parameter $u$, representing the difference in concentration between the 
components of a binary mixture. On the periodic domain $\TT = \mathbb{R}^d / 2\pi\mathbb{Z}^d$ ($d\in\mathbb N^*$), 
the system takes the form
\begin{equation}\label{eq:ch_intro}
\begin{cases}
u_t = \kappa \Delta \mu & \text{in } (0,T)\times\TT, \\
\mu = -\alpha \Delta u + f(u) & \text{in } (0,T)\times\TT,
\end{cases}
\end{equation}
where $T>0$ is a fixed time horizon, $\mu$ is the chemical potential, $\kappa>0$ 
is the mobility, $\alpha>0$ is related to the surface tension at the interface, 
and $f$ is the derivative of a double-well potential $F$ whose minima correspond 
to the stable phases of the mixture. A typical (and thermodynamically relevant) 
choice is
\begin{equation}\label{nl_intro}
F(s)= \tau (s^2-1)^2,
\qquad\text{so that}\qquad 
f(s)=4\tau(s^3-s),
\end{equation}
where $\tau>0$ controls the strength of the phase-separation tendency.\footnote{%
In physical models, one often normalizes the concentrations of the two components in the
mixture so that each lies in $[0,1]$ and their sum is identically equal to $1$. Under this
convention the order parameter $u$ is defined as their difference, and therefore takes values
in $[-1,1]$. More refined thermodynamic descriptions use a singular (logarithmic) potential
$F$ whose domain is $(-1,1)$, which enforces this constraint at the level of the model.
From both the physical and mathematical viewpoints, this logarithmic potential is the
relevant one, but its singular nature makes the analysis substantially more delicate. For this
reason, the regular polynomial approximation \eqref{nl_intro} is widely used in the
mathematical literature, even though the resulting Cahn--Hilliard dynamics do not prevent
solutions from leaving $[-1,1]$; see, e.g., \cite[Section 1 and Remark 2.2]{Mir17}.}

Periodicity is commonly adopted in the modelling of phase separation (see, e.g., \cite{Ell89,NST89,Mir17}), both for physical and practical reasons. On one hand, it is consistent with the mass-conserving character of the equation: in the absence of boundary fluxes, we have
\begin{equation*}
\frac{\d}{\dt}\int_{\TT} u(t,x)\dx = 0,
\end{equation*}
which reflects the preservation of the total mass of the mixture. On the other hand, periodic domains eliminate the need to prescribe boundary behaviour whose physical relevance may be limited in bulk models, while offering numerical and computational advantages such as the use of efficient Fourier-based methods and the avoidance of artificial boundary layers without altering the essential phase-separation dynamics \cite{CE90,LS19,GWW14}.

From a mathematical viewpoint, the Cahn–Hilliard system is a nonlinear fourth-order parabolic equation for the order parameter $u$. In order to obtain a well-posed evolution problem, it must therefore be supplemented with appropriate initial data. In this work, we consider an initial condition of the form
\begin{equation}\label{u_0}
u(0,x)=u_0(x),
\end{equation}
posed on the periodic domain introduced above.

Beyond the modelling aspects, our interest lies in the analysis of the Cahn–Hilliard equation from the perspective of control theory. Roughly speaking, to control a dynamical system governed by a PDE means to act on its trajectories, typically through a forcing term, a boundary input, or another external mechanism, in order to reach a established goal. More precisely, a fundamental question in PDE control is whether one can guide the system to a desired target (either \textit{exactly} or \textit{approximately}) within a finite time, or in particular whether the system can be driven exactly to zero (\textit{null controllability}). Classical references on these notions include the works \cite{MZ05,Cor07,TW09}.

From the control viewpoint, the two notions that we consider in this work are
\emph{approximate controllability} and \emph{null controllability}. To introduce these notions, we consider
a controlled version of the Cahn--Hilliard system \eqref{eq:ch_intro} in which an internal control $h=h(t,x)$ acts on a region $\omega\subseteq\TT$ (usually an nonempty open subset or even the full domain $\TT$)
\begin{equation}\label{ch_control}
\begin{cases}
u_t = \kappa \Delta \mu + \mathbf{1}_\omega(x)\,h(t,x) & \text{ in } (0,T)\times\TT,\\
\mu = -\alpha \Delta u + f(u) & \text{ in } (0,T)\times\TT,
\end{cases}
\end{equation}
 with the initial condition \eqref{u_0}. In \eqref{ch_control}, $\mathbf{1}_\omega$ stands for the indicator function of the set $\omega$. 

Formally, \emph{approximate controllability} asks whether, for any prescribed target profile
$u_{\mathrm{tar}}$ and any $\epsilon>0$, one can choose a control $h$ so that the
solution of the controlled system \eqref{ch_control} satisfies
\begin{equation*}
\|u(T,\cdot)-u_{\mathrm{tar}}\|_{X}<\epsilon,
\end{equation*}
for a suitable functional space $X$. 
In other words, the question is whether the evolution starting from $u_0$ can be steered
arbitrarily close to a desired configuration at the final time.\footnote{In physical models, this may be interpreted as the possibility of influencing the evolution from the initial state $u_0$ so that the configuration at time $T$ comes close to a prescribed distribution of phases.}

The notion of \emph{null controllability} concerns
the question of steering the system exactly
to the zero state at the final time, that is, whether there exists a control $h$ such that
\begin{equation*}
u(T,x)\equiv 0  \,\text{ in }\, \TT.
\end{equation*}
Thus, one seeks to determine whether the dynamics starting from $u_0$ can be driven all the way to zero at time $T$.\footnote{In models for phase–separating mixtures, the state $u\equiv 0$ corresponds to a spatially uniform distribution of the two components. For this reason, steering the system to the zero state is often interpreted as reaching a homogeneous configuration.}

The global controllability of fourth-order systems, and in particular of the Cahn--Hilliard equation, remains largely open and challenging. Existing contributions, such as \cite{DR98}, \cite{Guz20}, \cite{Kas20}, and \cite{LY25}, typically rely on restrictive assumptions concerning the spatial dimension, the structure of the nonlinearity, or the choice of boundary conditions. Among these works, only \cite{Guz20} addresses the Cahn--Hilliard equation \eqref{ch_control} in one spatial dimension with the physically relevant nonlinearity $f(s)=u^3-u$, whereas the remaining papers  study control properties of fourth-order parabolic equations with different and more tractable nonlinearities (globally Lipschitz or power-type with good sign). We also note that the result in \cite{Guz20} is local in the sense that it requires small initial data, while the other works achieve global controllability under specific assumptions on the nonlinear terms. Altogether, these limitations highlight the intrinsic difficulties of controlling nonlinear fourth-order dynamics globally, particularly in higher dimensions.

To the best of the author’s knowledge, no existing result addresses global controllability properties of the Cahn–Hilliard equation \eqref{ch_control}. In particular, the analysis of small-time global controllability properties for such systems remains largely unexplored, which motivates the present study.

\subsection{Global approximate controllability}

Building on the discussion above, we now introduce a more precise formulation of the approximate controllability problem. To simplify the exposition, we rewrite the controlled Cahn--Hilliard equation as 
\begin{equation}\label{eq_main}
\begin{cases}
u_t + \Delta^2 u + \Delta u = \Delta(u^3) + \eta & \text{in } (0,T)\times \TT,\\[1mm]
u(0) = u_0, &\text{in } \TT.
\end{cases}
\end{equation}
This formulation corresponds to choosing $\kappa=\nu=1$ and $\tau=1/4$ in \eqref{eq:ch_intro}--\eqref{nl_intro}. We note that a similar analysis can be performed for general values of the positive constants $\kappa$, $\nu$ and $\tau$.

In \eqref{eq_main}, $\eta=\eta(t,x)$ corresponds to a control applied on the whole domain $\TT$. With this function, our aim is to steer the solution from any given initial data $u_0$ arbitrarily close to a target function $u_1$ in a suitable norm. In more detail, for any nonnegative integer $k$, let us denote by $H^k(\TT)$ the usual Sobolev space defined on the $d$-dimensional torus (see \Cref{functional}) and consider the canonical basis of $\mathbb R^d$ given by 
\begin{equation*}
	\mathcal K=\left\{\left(1,0,\ldots,0\right), \left(0,1,\ldots,0\right),\ldots,\left(0,\ldots,0,1\right)\right\}\subset \mathbb{Z}^d.
\end{equation*}
Let us denote the linear space \begin{equation}\label{hnot}
	\mathscr{H}_0=\spn\{1, \sin (x\cdot k), \cos(x\cdot k) \}_{k\in \mathcal K}.
\end{equation}

Our interest is the following notion of approximate controllability. 
\begin{defi}
The Cahn--Hilliard equation \eqref{eq_main} is said to be \emph{globally $H^k$-approximately controllable} at time $T>0$ by an $\mathscr H_0$-valued control if the following holds: for any $u_0,u_1\in H^k(\TT)$ and any $\epsilon>0$, there exists a control
$
\eta\in L^\infty(0,T;\mathscr H_0)
$
such that the corresponding solution $u$ to \eqref{eq_main} satisfies
\begin{equation*}
\|u(T)-u_1\|_{H^k(\TT)} < \epsilon.
\end{equation*}
If this property holds for every time horizon $T>0$, we say that the Cahn--Hilliard equation \eqref{eq_main} is \emph{small-time globally $H^k$-approximately controllable}.  
\end{defi}

Our first main result reads as follows.

\begin{theo}\label{small_time}
Let $k\in\mathbb N^*$ be such that $k>d/2$. Then, the Cahn-Hilliard equation \eqref{eq_main} is small-time globally $H^k$-approximately controllable.  
\end{theo}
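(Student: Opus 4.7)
My plan is to implement the \emph{Agrachev--Sarychev geometric control method} adapted to the fourth-order cubic nonlinearity $\Delta(u^3)$. The strategy proceeds in three conceptual steps: (i) establish local well-posedness and continuous dependence of the controlled system in a functional setting compatible with $\mathscr{H}_0$-valued controls; (ii) identify a saturating sequence of finite-dimensional subspaces generated from $\mathscr{H}_0$ through iterated cubic interactions, and prove density of the union in $H^k(\TT)$; (iii) run a convexification/relaxation argument that reduces controls valued in the saturated space down to controls valued in $\mathscr{H}_0$.

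\textbf{Step 1 (Functional framework).} Since $k>d/2$, the space $H^k(\TT)$ is a Banach algebra, which delivers local well-posedness in $C([0,T];H^k)$ for \eqref{eq_main} with continuous dependence of the end-point map $(u_0,\eta)\mapsto u(T)$ on both arguments, in particular with respect to strong convergence of $\eta$ in, say, $L^2(0,T;H^k)$ and weak-$*$ convergence of certain high-frequency averages. I would also quantify this continuity on short time intervals, since the convexification below involves rapidly oscillating controls of large amplitude on time intervals of size $\varepsilon\to 0$.

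\textbf{Step 2 (Saturation).} Define inductively
\begin{equation*}
\mathcal{H}_{j+1} \= \mathcal{H}_j + \spn\bigl\{\Delta(\eta_1\eta_2\eta_3) : \eta_1,\eta_2,\eta_3 \in \mathcal{H}_j\bigr\},
\end{equation*}
with $\mathcal{H}_0=\mathscr{H}_0$, and set $\mathscr{H}_\infty = \bigcup_{j\ge 0}\mathcal{H}_j$. The goal is to prove that $\mathscr{H}_\infty$ is dense in $H^k(\TT)$. Using complex exponentials, a product $e^{ix\cdot k_1}e^{ix\cdot k_2}e^{ix\cdot k_3}= e^{ix\cdot(k_1+k_2+k_3)}$, and the outer Laplacian merely multiplies by $-|k_1+k_2+k_3|^2$, which is harmless as long as the wavenumber is nonzero (the zero-mode constant is already in $\mathscr{H}_0$). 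Starting from the canonical basis $\mathcal{K}=\{e_1,\dots,e_d\}$ together with $\{-e_i\}$ (available via $\sin,\cos$), signed triples $\pm e_{i_1}\pm e_{i_2}\pm e_{i_3}$ are reached in one iteration, and since $\mathcal{K}$ generates $\mathbb{Z}^d$ as a group, induction on $|k|$ shows that every $k\in\mathbb{Z}^d$ eventually appears as a Fourier mode in some $\mathcal{H}_j$. This yields density in any $H^k(\TT)$.

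\textbf{Step 3 (Descent via convexification).} The heart of the argument is the inductive \emph{extension principle}: if \eqref{eq_main} is (small-time, $H^k$) approximately controllable by $\mathcal{H}_{j+1}$-valued controls, then it is also approximately controllable by $\mathcal{H}_j$-valued controls. Given a target reached by $\eta\in\mathcal{H}_{j+1}$, one replaces each new direction $\Delta(\eta_1\eta_2\eta_3)$ by the averaged cubic effect of a fast-oscillating, large-amplitude $\mathcal{H}_j$-valued control of the form $\varepsilon^{-\alpha}\bigl(a(t/\varepsilon)\eta_1+b(t/\varepsilon)\eta_2+c(t/\varepsilon)\eta_3\bigr)$, where $a,b,c$ are chosen (e.g.\ via trigonometric profiles with zero mean and prescribed mixed averages) so that, after passage through the cubic nonlinearity and time averaging as $\varepsilon\to 0$, the remainder produces precisely $\Delta(\eta_1\eta_2\eta_3)$. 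Iterating this descent finitely many times reduces the saturated control to one valued in $\mathscr{H}_0$ while keeping the final error below $\epsilon$. Combining with Step~2, approximate controllability holds in any arbitrarily small time $T>0$, proving \Cref{small_time}.

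\textbf{Main obstacle.} The technically delicate part is the convexification step for a nonlinearity that is both \emph{cubic} and \emph{differentiated} (the outer $\Delta$). One must verify that the large-amplitude, rapidly oscillating $\mathcal{H}_j$-controls produce the desired cubic Reynolds-type average $\Delta(\eta_1\eta_2\eta_3)$ while all remainder terms remain small in $H^k$, uniformly on short intervals. This forces a careful tuning of the amplitude scaling $\varepsilon^{-\alpha}$ against the interval length $\varepsilon$, exploiting the parabolic smoothing of $\Delta^2$ to absorb the lower-order destabilizing term $\Delta u$ and the cubic remainders. The remaining ingredients (well-posedness, Fourier saturation, finite induction) are structural, but this quantitative matching concentrates most of the analytical work.
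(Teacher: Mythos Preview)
Your three-step architecture (well-posedness, saturation, descent) matches the paper's, and your saturation argument in Step~2 is essentially its Proposition~\ref{prop:charac}: the paper writes the new directions as $\Delta(\varphi^3)$ rather than $\Delta(\eta_1\eta_2\eta_3)$, but these coincide after the polarization identity that the paper records as Lemma~\ref{lem_aux}. The genuine divergence is in Step~3. The paper, following Nersesyan, does \emph{not} use rapidly oscillating controls. Instead it proves an asymptotic identity for an \emph{extended} system in which a constant-in-time shift $\delta^{-1/3}\varphi$ is inserted into the nonlinearity alongside a constant control $\delta^{-1}\eta$, namely $\mathcal R_\delta(u_0,\delta^{-1/3}\varphi,\delta^{-1}\eta)\to u_0+\eta+\Delta(\varphi^3)$ in $H^k$ as $\delta\to 0^+$ (Proposition~\ref{prop_asympt}). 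This shift is then realised with genuine $\mathscr H_0$-controls through the flow identity $\mathcal R_\delta(u_0,\varphi,\eta)=\mathcal R_\delta(u_0+\varphi,0,\eta)-\varphi$: one first steers the state to $u_0+\delta^{-1/3}\varphi$ using the induction hypothesis at level $N-1$, lets the system evolve, and then steers back. All controls are piecewise constant in time; no averaging enters.

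Your Step~3, by contrast, describes a time-averaging mechanism, and there is a conceptual gap in how you present it. The control $\eta$ enters \eqref{eq_main} \emph{additively} and never passes through $\Delta(u^3)$ itself, so an oscillating control $\varepsilon^{-\alpha}(a\eta_1+b\eta_2+c\eta_3)$ cannot directly produce a cubic average of the $\eta_i$'s. What actually happens is a two-step effect: the large control first drives the \emph{state} to amplitude $\varepsilon^{1-\alpha}$ along the primitive profile $\Psi$ with $\Psi'=\psi$, and it is $\Delta(u^3)\approx\varepsilon^{3(1-\alpha)}\Delta(\Psi^3)$ that, integrated over time $\varepsilon$, yields an $O(1)$ contribution only for the specific choice $\alpha=4/3$ (state amplitude $\varepsilon^{-1/3}$, which is exactly the paper's $\delta^{-1/3}$). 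Without making this two-step structure and scaling explicit, your Step~3 is a pointer, not a proof; and even once the scaling is fixed you would still need to control the dangerous cross term $3\delta^{1/3}\Delta((v+w)\varphi^2)$ at $H^k$ level, which in the paper requires a nontrivial iterated integration-by-parts (their Lemma~\ref{estimateofp3}) precisely because a naive Young-inequality bound picks up a negative power of $\delta$. The Nersesyan route adopted in the paper sidesteps the oscillation bookkeeping entirely and gives the cleaner implementation here.
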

 
Theorem \ref{small_time} represents an important advancement in the study of controllability for the Cahn--Hilliard equation and, to the best of our knowledge, provides the first global control result for \eqref{eq_main}. The theorem highlights several significant aspects. First, the control acts only on finitely many Fourier modes. More precisely, the result holds on a $d$-dimensional domain, with a control space of dimension $2d+1$ that is independent of the system parameters as well as the initial and target states. On the other hand, we establish the global approximate controllability without any restriction on the control time $T>0$, in fact, $T$ can be taken arbitrarily small.

The methodology used to prove Theorem \ref{small_time} is inspired by the geometric control approach developed by Agrachev and Sarychev in \cite{AS05,AS06}. Their strategy, originally introduced for the Navier–Stokes and Euler equations on the torus, achieves approximate controllability by means of finite-dimensional controls acting on the whole domain and relies on the use of large controls over small time intervals.

This approach can be traced back to the finite-dimensional framework of \cite{JK85}, where related geometric ideas based on short-time control mechanisms were developed. The works \cite{AS05,AS06} may thus be viewed as infinite-dimensional extensions of this control philosophy; see also \cite{GHHM18} for another extension of these ideas to infinite-dimensional systems.

Since then, the Agrachev–Sarychev strategy has been successfully adapted to a broad class of nonlinear evolution equations, including the three-dimensional Navier–Stokes system \cite{S06}, compressible and incompressible Euler equations \cite{N11,N10}, the viscous Burgers equation \cite{S14,SA18}, and the Boussinesq equations \cite{NR25b}.

There has also been substantial progress for nonlinear scalar equations on periodic domains.
In particular, the approach in \cite{Ner21} plays an important role in our arguments: the overall structure of our proof is inspired by their strategy, suitably adapted to the fourth–order setting of the Cahn–Hilliard dynamics. Further advances motivated by
\cite{Ner21} include controllability results for the Benjamin--Bona--Mahony equation \cite{Jel23}, the Kadomtsev--Petviashvili--I equation \cite{jellouli22}, the Kuramoto--Sivashinsky equation \cite{G22}, the Korteweg--de~Vries equation \cite{C23}, the Kawahara equation \cite{AM25}, the Zakharov--Kuznetsov--Burgers equation \cite{C25}, and the Camassa--Holm equation \cite{CDM25}. Furthermore, regarding small-time global controllability in the framework of bilinear control, we refer to \cite{DN25} for the Schrödinger equation, \cite{Pozzoli_2024} for the wave equation, \cite{DPU25,MM25} for  semilinear parabolic equations, and \cite{DT25} for the Burgers equation.

\subsection{Global null controllability}
Building on the approximate controllability result presented in the previous section, we now turn to an application: global null controllability. The argument relies essentially on this property, combined with a local null controllability result that allows us to reach the null state exactly. 

Let $d\in \{1,2,3\}$. Our second main result can be stated as follows. 
\begin{theo}\label{global_null}
	Let $T>0$ and $u_0\in L^2(\mathbb{T}^d).$ Then there exists a control $\eta\in L^{\infty}(0,T;L^\infty(\mathbb{T}^d))$ such that equation \eqref{eq_main} satisfies $u(T,\cdot)=0$ in $\mathbb{T}^d$. Moreover, the control is of the form
	\begin{equation}\label{exp_con}
		\eta(t,\cdot)=\begin{cases}
			0 & t\in (0,\epsilon),\\
			\eta_1(t,\cdot) & t \in (\epsilon, \delta),\\
			\mathbf{1}_{\omega}\eta_2(t,\cdot) & t \in (\delta, T),
		\end{cases}
	\end{equation}
	 where $\epsilon$ and $\delta$ are arbitrary positive constants satisfying $0<\epsilon<\delta<T$, $\eta_1(t,\cdot)\in \mathscr{H}_0,$ and $\eta_2\in L^{\infty}(\delta,T;L^\infty(\omega))$ where $\omega\subset \mathbb{T}^d$ is any measurable set with positive Lebesgue measure. 
\end{theo}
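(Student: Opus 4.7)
The three--phase structure in \eqref{exp_con} essentially dictates the proof: to each interval corresponds a different ingredient. The plan is to chain together a short parabolic--smoothing phase, the small--time global approximate controllability from Theorem \ref{small_time}, and the local null controllability with an $L^\infty(\omega)$ control established earlier in the paper via a fixed--point argument (valid for $d\in\{1,2,3\}$).

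On $(0,\epsilon)$ I would take $\eta\equiv 0$ and solve \eqref{eq_main} with initial datum $u_0\in L^2(\mathbb{T}^d)$. In dimensions $d\in\{1,2,3\}$ the cubic nonlinearity $\Delta(u^3)$ is sub--critical for the fourth--order parabolic flow, so well--posedness of the uncontrolled Cahn--Hilliard equation together with the instantaneous parabolic smoothing effect gives $u(\epsilon,\cdot)\in H^k(\mathbb{T}^d)$ for any $k\in\mathbb{N}$. Fixing some $k\in\mathbb{N}^*$ with $k>d/2$, I would then regard $u(\epsilon,\cdot)$ as a bona fide $H^k$ initial state for the next phase.

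On $(\epsilon,\delta)$ I would apply Theorem \ref{small_time} with initial state $u(\epsilon,\cdot)$, target $u_1=0$, and time horizon $\delta-\epsilon>0$. For any prescribed $\rho>0$ this produces a control $\eta_1\in L^\infty(\epsilon,\delta;\mathscr{H}_0)$ such that $\|u(\delta,\cdot)\|_{H^k(\mathbb{T}^d)}<\rho$; since $k>d/2$, the Sobolev embedding transfers this smallness to $L^\infty(\mathbb{T}^d)$ or to whichever norm is required by the local result. Then on $(\delta,T)$ I would apply the local null controllability theorem to $u(\delta,\cdot)$: choosing $\rho$ smaller than the smallness radius produced by the corresponding fixed--point argument, there exists $\eta_2\in L^\infty(\delta,T;L^\infty(\omega))$ with $u(T,\cdot)\equiv 0$. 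Concatenating $0$, $\eta_1$, and $\mathbf{1}_\omega\eta_2$ yields a control of the form \eqref{exp_con}.

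The main obstacle is the matching of thresholds between Phases 2 and 3: the admissible radius $\rho$ is fixed once and for all by the fixed--point argument behind the local null controllability, and it must then be \emph{reached} by the approximate controllability step. Fortunately, Theorem \ref{small_time} operates in arbitrarily short time and with arbitrary target precision, so choosing $\rho$ small and $\delta-\epsilon>0$ arbitrary works; this also explains why $\epsilon,\delta$ enter the statement as free parameters. A secondary, more technical point is to check rigorously that the free Cahn--Hilliard flow is globally defined on $[0,\epsilon]$ from merely $L^2$ data and lands in $H^k$ at time $\epsilon$, but this is standard in dimensions $d\leq 3$ thanks to the sub--criticality of the cubic nonlinearity and fourth--order parabolic smoothing estimates.
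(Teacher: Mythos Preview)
Your proposal is correct and follows essentially the same three--phase strategy as the paper: free evolution for smoothing, then Theorem~\ref{small_time} to steer close to zero, then the local null controllability (Proposition~\ref{res_non}) to reach zero exactly. The paper is slightly more economical in that it only invokes $u(\epsilon,\cdot)\in H^2(\TT)$ (obtained from the $L^2(0,T;H^2)$ regularity in Proposition~\ref{global_wellposed}) rather than arbitrary $H^k$, since $k=2$ suffices both for Theorem~\ref{small_time} when $d\le 3$ and for the smallness hypothesis in Proposition~\ref{res_non}.
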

Theorem \ref{global_null} establishes small-time null controllability for \eqref{eq_main}. This property is obtained by combining the global approximate controllability result stated in the previous section with a local null controllability argument that acts on a fixed subdomain. The overall strategy is organized into three stages: an initial waiting time, a finite-dimensional control acting on the whole torus, and finally a localized control that drives the solution exactly to zero. A detailed description of this construction is provided in \Cref{strategies}.

To address the null controllability strategy for the Cahn–Hilliard equation, we rely on the extensive body of work developed over the last two decades on spectral and observability estimates for parabolic equations on measurable sets with positive Lebesgue measure, see, for instance, \cite{PW13, AEWZ14, EMZ15,WZ17}. These results show that the classical geometric requirement that the control region be open can be significantly weakened, thanks to recent advances in spectral geometry and quantitative unique continuation, see \cite{LM18,Lg18a,Lg18b}. More precisely, for linear parabolic-type equations (such as the heat or Stokes equations \cite{CSZ20}), a comprehensive theory is now available: observability holds from sets of the form $E \times \omega$ with $E \subset (0,T)$ and $\omega \subset \Omega$  are measurable sets of positive Lebesgue measure.

More recently, observability for the heat equation has been established even from control sets of zero measure, provided they have positive fractional Hausdorff content, see \cite{BM23, GLMO25}. This yields the sharpest geometric conditions currently known. In line with these developments, we extend our global null controllability results to controls supported on certain sets of zero measure. For clarity, recall that the $s$-Hausdorff content of a set $E\subset\mathbb{R}^d$ is 
\[
\mathcal{C}^s_{\mathcal H}(E)=\inf\Big\{\sum_j r_j^{\,s} : E\subset \bigcup_j B(x_j,r_j)\Big\},
\]
and its Hausdorff dimension is defined as
\[
\dim_{\mathcal H}(E)=\inf\{s : \mathcal{C}^s_{\mathcal H}(E)=0\}.
\]
We may now state the following global null controllability result:
\begin{theo}\label{global_nulld2}
	Let $d\in \{1,2\}$, $T>0$,  $0<\epsilon<\delta<T,$ $m>0$ and $u_0\in L^2(\mathbb{T}^d).$ Then the solution of the system \eqref{eq_main} satisfies $u(T,\cdot)=0$ in $\mathbb{T}^d$ by means of  a control $\eta\in L^{\infty}(0,T ;H^{-2}(\mathbb{T}^d))$ of the form \eqref{exp_con} 
	where, $\eta_1\in L^\infty(\epsilon, \delta;\mathscr{H}_0)$, $\eta_2\in L^{\infty}(\delta,T; H^{-2}(\TT))$ is supported in $(\delta, T)\times\omega$, $\omega\subset \mathbb{T}^d$ is any closed measurable set satisfying $\mathcal{C}^{d-s}_{\mathcal H}(\omega)>m$ for some $0<s<1$. 
\end{theo}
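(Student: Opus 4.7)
The plan is to preserve the three-phase strategy used to establish Theorem \ref{global_null} and to modify only the localized phase, in which the control set $\omega$ now has zero Lebesgue measure but positive $(d-s)$-Hausdorff content. On the first interval $(0,\epsilon)$ we set $\eta\equiv 0$; the parabolic smoothing of the semigroup generated by $\Delta^2+\Delta$ regularizes $u_0\in L^2(\TT)$ into an $H^k(\TT)$-datum for any $k>d/2$. On the second interval $(\epsilon,\delta)$ we invoke Theorem \ref{small_time} to find an $\mathscr{H}_0$-valued control $\eta_1$ such that $\|u(\delta)\|_{H^k(\TT)}$ is arbitrarily small. Both steps coincide verbatim with those used to prove Theorem \ref{global_null}, so the whole difficulty is concentrated in the third interval $(\delta,T)$.

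For this last phase, the goal is a local null controllability statement for \eqref{eq_main} by a control supported in $\omega$ and belonging to $L^\infty(\delta,T;H^{-2}(\TT))$. The cornerstone is a spectral inequality of the form
\begin{equation*}
\|v\|_{L^2(\TT)}\le C\,e^{C\Lambda}\,\|v\|_{L^1(\omega)},\qquad v=\sum_{|k|\le\Lambda} c_k\,e^{ik\cdot x},\quad \Lambda\ge 1,
\end{equation*}
which, in dimensions $d\in\{1,2\}$, is exactly the content of \cite{BM23,GLMO25} and relies on quantitative propagation of smallness for real-analytic functions together with the Hausdorff-content geometry of $\omega$. Since the eigenfunctions of $\Delta^2+\Delta$ on $\TT$ coincide with those of $-\Delta$, this estimate also controls the spectral cut-offs of the biharmonic-type operator, with the parabolic truncation parameter being $\Lambda^4$ instead of $\Lambda^2$. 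A Lebeau--Robbiano telescopic construction then transfers this spectral bound into an observability inequality for the adjoint equation $-\varphi_t+\Delta^2\varphi+\Delta\varphi=0$ from $\omega$, with cost of the form $\exp\!\bigl(C(T-\delta)^{-\gamma}\bigr)$. By duality, we obtain null controllability for the linearized Cahn--Hilliard system by a control $\eta_2\in L^\infty(\delta,T;H^{-2}(\TT))$ supported in $\omega$; the choice of $H^{-2}$ is forced both by the possible vanishing of $|\omega|$ and by the $H^2$-regularity of the adjoint modes being tested.

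The nonlinear local null controllability on $(\delta,T)$ then follows by a source-term method, or equivalently a Banach fixed-point argument in a suitable weighted space, exactly as in the proof of Theorem \ref{global_null}: in dimensions $d\in\{1,2\}$, the Sobolev embedding $H^k(\TT)\hookrightarrow L^\infty(\TT)$ for $k>d/2$ makes $u\mapsto u^3$ locally Lipschitz in the relevant functional framework, and the smallness of $\|u(\delta)\|_{H^k}$ produced in Phase 2 is inherited by the nonlinear iteration. Concatenating the three controls produces a final control of the form \eqref{exp_con}. The main obstacle, and the unique reason for the restriction $d\in\{1,2\}$, is the spectral inequality on sets of positive Hausdorff content: for $d\ge 3$ the currently available propagation-of-smallness estimates require the control region to have positive Lebesgue measure, so the argument does not extend without a genuinely new input at the spectral level.
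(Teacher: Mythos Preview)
Your three-phase strategy coincides with the paper's approach, and the reduction to a local null-controllability result on $(\delta,T)$ via spectral inequality, Lebeau--Robbiano, duality, and the source-term method is exactly what is done. Two points, however, are stated inaccurately and should be corrected.

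First, the spectral inequality you wrote cannot involve $\|v\|_{L^1(\omega)}$: if $|\omega|=0$ this norm vanishes identically and the inequality is vacuous. The correct estimate from \cite{BM23} (Proposition~\ref{spectral} in the paper) is
\[
\|\mathcal E_\lambda \varphi\|_{L^\infty(\TT)}\le C e^{D\sqrt{\lambda}}\sup_{x\in\omega}|\mathcal E_\lambda\varphi(x)|,
\]
and consequently the observability is measured in $L^1(0,T;\sup_\omega|\cdot|)$. By duality, the null control for the linearized problem is not an $L^\infty$ function but a Borel \emph{measure} supported on $\omega$, i.e.\ $f\in L^\infty(0,T;\mathcal M(\TT))$. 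The paper then invokes the embedding $\mathcal M(\TT)\hookrightarrow H^{-k}(\TT)$ for $k>d/2$ (via $H^k\hookrightarrow C$) to place the control in $L^\infty(0,T;H^{-2}(\TT))$ when $d\le 3$; you gesture at this but the mechanism should be made explicit.

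Second, your explanation for the restriction $d\in\{1,2\}$ is wrong. The spectral inequality of \cite{BM23} holds on $\TT$ in \emph{every} dimension; no new propagation-of-smallness input is needed for $d=3$. The genuine obstruction is in the nonlinear fixed-point step: with control only in $H^{-2}$, the source-term method runs in the weighted space built on $X_T^0=C([0,T];L^2)\cap L^2(0,T;H^2)$, and one must bound $\|u^3\|_{L^2(0,T;L^2)}$ in terms of $\|u\|_{X_T^0}$. The Gagliardo--Nirenberg inequality gives $\|u\|_{L^6}^6\le C\|u\|_{L^2}^{4}\|u\|_{H^2}^{2}$ only for $d\le 2$; for $d=3$ the exponents shift to $\|u\|_{L^6}^6\le C\|u\|_{L^2}^{3}\|u\|_{H^2}^{3}$, and the time integral of $\|u\|_{H^2}^3$ is not controlled by $X_T^0$. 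That, and not the spectral inequality, is why the paper stops at $d=2$.
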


\subsection{Strategy of the proof of main results}\label{strategies}
We begin this section by presenting the overall strategy used to establish our two main controllability results.
\begin{figure}[h!]
	\centering
	\includegraphics[width=0.95\textwidth, trim=0cm 9cm 0cm 0cm, clip]{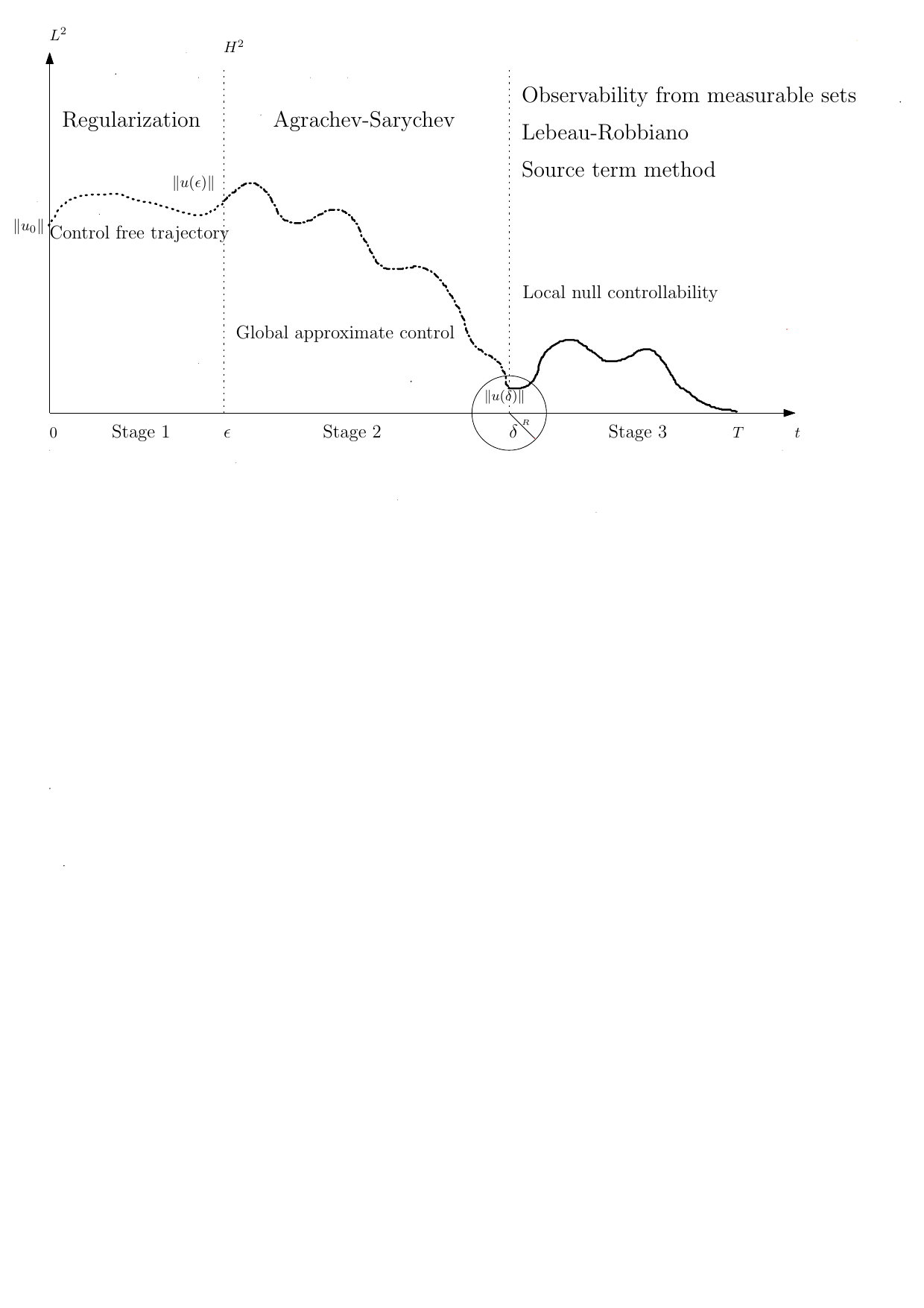}
	\caption{Global null controllability in small time}
	\label{fig:mylabel}
\end{figure}

\begin{itemize}
	\item The proof of Theorem \ref{small_time} relies on three key ingredients:
	 stability estimates (\Cref{Lipschitz type}), asymptotic property (\Cref{prop_asympt}), the saturation property (\Cref{thm_sat}). Using these properties, we establish small-time global approximate controllability of \eqref{eq_main}. The proof technique is inspired by the approach in \cite{Ner21}. A brief idea is that, thanks to the asymptotic property, the system can be driven over sufficiently small time intervals from $u_0$ to an arbitrarily small neighborhood of any point in $u_0 + \mathscr H_0$. By iterating this procedure together with using the saturation property, we reduce the approximate controllability problem to steering the system toward targets lying in a finite-dimensional affine subspace translated by the initial data. The key step is the use of large controls over short intervals, exploiting a specific asymptotic property of the nonlinear dynamics. 
	 Finally, continuity of the solution in time, together with stability estimates (see \Cref{Lipschitz type}), allows us to conclude global approximate controllability at any prescribed time.

	\item To prove global null controllability, we use a strategy that combines global approximate controllability with local null controllability; see \Cref{fig:mylabel}. More precisely, we first prove small-time global approximate null controllability by which we steer the trajectory arbitrarily close to the origin, and then we apply the local null controllability result to reach the origin exactly. 
	
	\item The main idea behind the proof of local null controllability is to reduce the nonlinear problem to the linearized one and to establish the null controllability of this linearized system. Our controllability result applies when the control region is a measurable set of positive Lebesgue measure, or even for certain control sets of zero measure. The overall structure of the argument is inspired by the works \cite{BM23} and \cite{GLMO25} which combine a propagation of smallness of free solution of the linearized system and a Lebeau--Robbiano type strategy \cite{LR}. Furthermore, We establish an appropriate control cost estimate which enables the application of source term method of \cite{Tucsnak_nonlinear}. This linear result then allows us to deduce the local null controllability of the original nonlinear equation via a fixed point argument. 

\end{itemize}

\subsection{Outline}
The scheme of the paper is as follows. \Cref{sec_prem} presents some preliminary results concerning the well-posedness of the concerned system together with stability estimates. \Cref{sec_tech} contains the main technical ingredients, including an asymptotic property (\Cref{prop_asympt}) and a saturation property (\Cref{thm_sat}), which form the foundation of our main result. In \Cref{sec_global}, we prove the main global approximate controllability result Theorem \ref{small_time}. \Cref{sec_globalnull} is devoted to the proof of the global null controllability results Theorem \ref{global_null} and Theorem \ref{global_nulld2}. Finally, we provide all the proofs of well-posedness issues and some auxiliary results in Appendices \ref{app} and \ref{estp3}.

\section{Preliminaries}\label{sec_prem}
This section is devoted to preliminaries concerning the functional setting and several key well-posedness results.
\subsection{Functional framework}\label{functional}
Consider $L^2(\mathbb{T}^d):=L^2(\mathbb{T}^d;\mathbb{R})$ as the collection of real-valued
square-integrable functions on the torus. For $u\in L^2(\TT)$, its Fourier coefficients are given by
\begin{equation*}
u_{k} = \frac{1}{(2\pi)^d} \int_{\mathbb{T}^d} u(x)\, e^{-ix\cdot k}\,dx,
\qquad k \in \mathbb{Z}^d,
\end{equation*}
and we write the corresponding Fourier series expansion
\begin{equation*}
u(x) = \sum_{k \in \mathbb{Z}^d} u_{k}\, e^{ix\cdot k},
\end{equation*}
where the series converges to $u$ in the $L^2$ sense. For real-valued functions we also have
$u_{-k}=\overline{u_k}$ for all $k\in\mathbb{Z}^d$.

For $p\in[1,\infty]$, we denote by $L^p(\TT)$ the usual Lebesgue spaces on the
$d$-dimensional torus, endowed with the norms
\begin{equation*}
\|u\|_{L^p(\TT)}
:=\left(\int_{\TT}|u(x)|^p\,dx\right)^{1/p}, 
\;\;  1\le p<\infty, \qquad \text{and}\qquad \|u\|_{L^\infty(\TT)}
:=\esssup_{x\in\TT}|u(x)|.
\end{equation*}

For $s \in \mathbb{R}$, we define the spaces
\begin{equation*}
H^s(\mathbb{T}^d)
:=
\left\{
u = \sum_{k\in \mathbb{Z}^d} 
u_{k} \, e^{ix \cdot k} \in L^2(\mathbb{T}^d)
\; : \;
\sum_{k\in \mathbb{Z}^d} 
|u_{k}|^2 \, (1 + |k|^2)^s < \infty
\right\}.
\end{equation*}
The induced norm is $\|u\|_{H^s(\mathbb{T}^d)}^2 = \sum_{k\in \mathbb{Z}^d} |u_{k}|^2 \,(1 + |k|^2)^s$. 
\begin{rmk}
For brevity, and whenever the underlying domain is clear from the context, we
omit the explicit indication of the domain in the norms.  
For instance, we write $\|u\|_{L^p}$ or $\|u\|_{H^s}$ instead of
$\|u\|_{L^p(\TT)}$ or $\|u\|_{H^s(\TT)}$.  
Whenever the domain plays a role or ambiguity may arise, we will instead specify it explicitly.
\end{rmk}
The homogeneous operator $(-\Delta)^{s/2}$ acts as
\begin{equation*}
(-\Delta)^{s/2} u 
= \sum_{k\in \mathbb{Z}^d} |k|^{s} \, u_{k} \, e^{ix \cdot k},
\end{equation*}
with norm
\begin{equation*}
\|(-\Delta)^{s/2} u\|_{L^2}^2 
= \sum_{k\in \mathbb{Z}^d} |k|^{2s} \, |u_{k}|^2.
\end{equation*}
For $s>0$, the $H^s(\mathbb{T}^d)$-norm defined above satisfies the equivalence
\begin{equation*}
\|u\|_{H^s} 
\simeq 
\|u\|_{L^2} 
+ \|(-\Delta)^{s/2} u\|_{L^2}.
\end{equation*}
Let $\mathcal A$ be the operator defined as follows:
\begin{equation}\label{abstract}
	\begin{cases}
\mathcal A u =-\Delta^2 u  -\Delta u, \quad \forall u \in \mathcal D(\mathcal A),\\
\mathcal D(\mathcal A) = \left\{ u \in L^2(\TT) : \mathcal A u \in L^2(\TT) \right\}.
\end{cases}
\end{equation}
It is easy to see that, $\mathcal A$ generates a strongly continuous semigroup 
$\{\mathcal S(t) \}_{t \ge 0}$ on the space $L^2(\TT)$.
For any $s\geq 0$ and $T > 0$, we define the space
\begin{equation}\label{XT}
X_T^s = C([0, T]; H^s(\TT)) \cap L^2(0, T; H^{s+2}(\TT))
\end{equation}
endowed with its natural Hilbert norm
\begin{equation*}
\|u\|_{X_T^s} = \|u\|_{C([0,T];H^s(\TT))} + \|u\|_{L^2(0,T;H^{s+2}(\TT))}.
\end{equation*}
\subsection{Well-posedness results}
Let us consider the following extended system 
\begin{equation}\label{eq_extension}
	\begin{cases}
		u_t +\Delta^2 (u + \varphi) +\Delta(u+\varphi)=\Delta (u + \varphi)^3+\eta, & t>0, \,\, \, x \in \TT, \\
		u(0) = u_0, & x \in \TT,
	\end{cases}
\end{equation}
where $\varphi:=\varphi(x)$ is some suitable function defined on $\TT.$ We have the following local well-posedness results for the Cahn-Hilliard equation \eqref{eq_extension}.
\begin{prop}[Local well-posedness in time]\label{wellposed2}
	Let $k > d/2$. For any 
	$u_0 \in H^{k}(\mathbb{T}^d)$, $\varphi\in H^{k+2}(\TT)$ and 
	$\eta \in L^2_{\mathrm{loc}}(0,\infty;H^{k-2}(\TT))$, 
	there exists a maximal time $\mathcal T = \mathcal T(u_0,\varphi, \eta) > 0$ and a unique mild solution 
	$u$ of \eqref{eq_extension}. Namely, for any $T < \mathcal T(u_0,\varphi, \eta)$,
	$	u \in C\big([0, T], H^{k}(\mathbb{T}^d)\big).$
	Moreover, if $\mathcal T(u_0,\varphi, \eta) < +\infty$, then 
	\begin{align*}
	\|u(t)\|_{H^{k}} \to +\infty \quad \text{as } t \to \mathcal T(u_0,\varphi,\eta)^{-}.
	\end{align*}
\end{prop}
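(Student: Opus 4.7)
The plan is to apply a standard Banach fixed-point argument to the mild formulation. Rewriting \eqref{eq_extension} as $u_t = \mathcal A u + G(u)$ with
$$G(u) := \mathcal A\varphi + \Delta(u+\varphi)^3 + \eta,$$
a mild solution must satisfy $u(t) = \mathcal S(t)u_0 + \int_0^t \mathcal S(t-s)G(u)(s)\,ds$. I would carry out the contraction in the parabolic energy space $X_T^k$ introduced in \Cref{functional}. Since $\mathcal A$ is a self-adjoint lower-order perturbation of $-\Delta^2$, it generates an analytic semigroup on $L^2(\TT)$ and enjoys the $L^2$-maximal regularity estimate
$$\Bigl\|\mathcal S(\cdot)u_0 + \int_0^{\cdot}\mathcal S(\cdot-s)g(s)\,ds\Bigr\|_{X_T^k} \le C\bigl(\|u_0\|_{H^k}+\|g\|_{L^2(0,T;H^{k-2})}\bigr),$$
with $C$ independent of $T$; by the very definition of $X_T^k$, this simultaneously delivers the desired $C([0,T];H^k)$ continuity.

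The nonlinear estimate is where the hypothesis $k>d/2$ enters in a crucial way: this is precisely the range for which $H^k(\TT)$ embeds into $L^\infty(\TT)$ and forms a Banach algebra. Consequently,
$$\|\Delta(u+\varphi)^3\|_{H^{k-2}} \le \|(u+\varphi)^3\|_{H^k} \le C\bigl(\|u\|_{H^k}+\|\varphi\|_{H^k}\bigr)^3,$$
together with the analogous Lipschitz bound coming from $a^3-b^3=(a-b)(a^2+ab+b^2)$. Denoting by $\Phi$ the fixed-point map defined by the mild formula, setting $R=2C\|u_0\|_{H^k}$, and combining the above with the obvious bounds $\|\mathcal A\varphi\|_{L^2(0,T;H^{k-2})}\le\sqrt T\|\varphi\|_{H^{k+2}}$ and $\|\eta\|_{L^2(0,T;H^{k-2})}\to 0$ as $T\to 0^+$ (absolute continuity of the integral), one obtains
\begin{align*}
\|\Phi(u)\|_{X_T^k} &\le \tfrac{R}{2}+C\sqrt T\bigl(\|\varphi\|_{H^{k+2}}+(R+\|\varphi\|_{H^k})^3\bigr)+C\|\eta\|_{L^2(0,T;H^{k-2})},\\
\|\Phi(u)-\Phi(v)\|_{X_T^k} &\le C\sqrt T\,(R+\|\varphi\|_{H^k})^2\,\|u-v\|_{X_T^k}.
\end{align*}
Choosing $T=T_0>0$ small enough, depending on $\|u_0\|_{H^k}$, $\|\varphi\|_{H^{k+2}}$ and $\eta$, makes $\Phi$ a self-map of the closed ball of radius $R$ in $X_{T_0}^k$ and a strict contraction there, and the Banach fixed-point theorem produces the unique local mild solution.

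The maximal time of existence is obtained by the usual continuation procedure: taking $u(T_0)$ as a new initial datum and observing that the contraction time depends monotonically on the $H^k$-norm of the current data, one extends the solution iteratively as long as $\|u(t)\|_{H^k}$ remains bounded. Setting $\mathcal T(u_0,\varphi,\eta)$ to be the supremum of such times, if $\mathcal T<\infty$ with $\limsup_{t\to\mathcal T^-}\|u(t)\|_{H^k}<\infty$, then the local existence could be restarted on a uniform interval slightly before $\mathcal T$ and would extend the solution past $\mathcal T$, which is impossible; this yields the stated blow-up alternative. The only genuinely delicate step is the nonlinear estimate, and the algebra property of $H^k$ for $k>d/2$ is indispensable and is the reason the whole scheme works in this range of $k$.
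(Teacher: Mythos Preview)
Your proposal is correct and follows essentially the same approach as the paper: a Banach fixed-point argument in $X_T^k$ for the mild formulation, where the nonlinear estimate relies on the algebra property of $H^k(\TT)$ for $k>d/2$, and the maximal time with blow-up alternative is obtained by the standard continuation argument. The only cosmetic differences are that the paper's semigroup estimate (Lemma~\ref{est_semigroup}) carries an $e^{CT}$ factor rather than a $T$-independent constant, and the paper defines the ball radius $R$ to absorb all the data $(u_0,\varphi,\eta)$ at once, whereas you set $R=2C\|u_0\|_{H^k}$ and treat the $\varphi$ and $\eta$ contributions via smallness of $T$; your choice degenerates when $u_0=0$, but this is trivially repaired.
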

\begin{prop}\label{max}
Let $u_0, \varphi, \eta,$ $\mathcal{T}(u_0, \varphi, \eta)$ and $u$ be as in \Cref{wellposed2}. Assume $\hat u_0\in H^k(\TT)$ and $T<\mathcal T(u_0,\varphi,\eta)$. Let us set $\Lambda=\norm{u}_{X_T^k}+ \|\eta\|_{L^2(0,T; H^{k-2}(\TT))}+\norm{\varphi}_{H^{k+2}}$. Then there exists $\delta>0$ depending on $T$ and $\Lambda$ such that if
 $\norm{u_0-\hat u_0}_{H^k}<\delta,$  equation \eqref{eq_extension} with initial condition $\hat u(0)=\hat u_0$ admits a unique mild solution $\hat u\in C([0,T];H^k(\TT)).$
\end{prop}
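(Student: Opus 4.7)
The plan is to combine the local existence result of \Cref{wellposed2} with a continuous dependence estimate on the difference of solutions, and then invoke the blow-up alternative to extend the maximal interval of existence of the perturbed solution past $T$. First I would apply \Cref{wellposed2} to the datum $\hat u_0$, obtaining a maximal time $\mathcal T^*:=\mathcal T(\hat u_0,\varphi,\eta)>0$ and a unique mild solution $\hat u\in C([0,T^{\prime}];H^k(\TT))$ for every $T^{\prime}<\mathcal T^*$. The goal becomes: for $\delta>0$ small enough, show $\mathcal T^*>T$. By the blow-up characterization, this reduces to establishing an a priori bound on $\|\hat u\|_{X_t^k}$ for every $t<\min\{T,\mathcal T^*\}$.

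Next I would write the equation for $w:=\hat u-u$. Using the factorization $(\hat u+\varphi)^3-(u+\varphi)^3 = w\,\mathcal Q$ with $\mathcal Q:=(\hat u+\varphi)^2+(\hat u+\varphi)(u+\varphi)+(u+\varphi)^2$, and noticing that the source term $\eta$ together with the $\varphi$-only contributions cancel, I get
\begin{equation*}
w_t+\Delta^2 w+\Delta w=\Delta\bigl[w\,\mathcal Q\bigr] \text{ in } (0,\min\{T,\mathcal T^*\})\times\TT,\qquad w(0)=\hat u_0-u_0.
\end{equation*}
Applying maximal regularity for the semigroup $\{\mathcal S(t)\}$ on $X_t^k$, together with the fact that $H^k(\TT)$ is a Banach algebra for $k>d/2$, I obtain, for $t<\min\{T,\mathcal T^*\}$,
\begin{equation*}
\|w\|_{X_t^k}\le C\Bigl(\|w(0)\|_{H^k}+\|w\,\mathcal Q\|_{L^2(0,t;H^k)}\Bigr)
\le C\|w(0)\|_{H^k}+C\sqrt t\,\|w\|_{X_t^k}\bigl(\|u\|_{X_t^k}+\|\hat u\|_{X_t^k}+\|\varphi\|_{H^{k+2}}\bigr)^2,
\end{equation*}
where I used $\|\mathcal Q\|_{L^\infty(0,t;H^k)}\lesssim(\|u\|_{X_t^k}+\|\hat u\|_{X_t^k}+\|\varphi\|_{H^{k+2}})^2$ and $\|w\|_{L^2(0,t;H^k)}\le\sqrt t\,\|w\|_{L^\infty(0,t;H^k)}$. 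This is essentially the Lipschitz-type estimate of \Cref{Lipschitz type} applied between the two solutions, which can be cited directly if convenient.

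Finally, I would close the argument by a bootstrap / continuity method. Define
\begin{equation*}
\tau:=\sup\bigl\{s\in[0,\min\{T,\mathcal T^*\}):\|\hat u\|_{X_s^k}\le 2\Lambda\bigr\}.
\end{equation*}
On $[0,\tau]$ the inequality above together with a Grönwall-type absorption yields $\|w\|_{X_\tau^k}\le C(T,\Lambda)\|w(0)\|_{H^k}\le C(T,\Lambda)\,\delta$. Choosing $\delta>0$ so small that $C(T,\Lambda)\,\delta<\Lambda$, the triangle inequality gives $\|\hat u\|_{X_\tau^k}\le\|u\|_{X_\tau^k}+\|w\|_{X_\tau^k}<2\Lambda$, so by continuity of $s\mapsto\|\hat u\|_{X_s^k}$ necessarily $\tau=\min\{T,\mathcal T^*\}$. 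If $\mathcal T^*\le T$ this would mean $\|\hat u\|_{X_{\mathcal T^*}^k}\le 2\Lambda$, contradicting the blow-up alternative of \Cref{wellposed2}; hence $\mathcal T^*>T$ and $\hat u\in C([0,T];H^k(\TT))$. The main obstacle is ensuring that the smallness factor $\sqrt t$ (or its Grönwall avatar) truly absorbs the nonlinear contribution uniformly on $[0,T]$; this is where it is essential that $\|u\|_{X_T^k}$ and $\|\varphi\|_{H^{k+2}}$ be controlled by the constant $\Lambda$ independent of $t$, so that the smallness of $\delta$ can be chosen only in terms of $T$ and $\Lambda$.
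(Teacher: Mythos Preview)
Your argument is correct and follows the same overall architecture as the paper's proof (derive an equation for the difference $w$, obtain an a~priori bound via a bootstrap, and invoke the blow-up alternative), but the technical implementation differs in two respects. First, you work with the Duhamel formulation and the semigroup estimates on $X_t^k$, factoring the nonlinearity as $w\,\mathcal Q$ so that the right-hand side is linear in $w$ with a coefficient bounded by the bootstrap hypothesis $\|\hat u\|_{X_s^k}\le 2\Lambda$; the paper instead performs direct energy estimates (testing against $w$ and $(-\Delta)^k w$), keeps the cubic expansion $w^3-3w^2(u+\varphi)+3w(u+\varphi)^2$, and arrives at the differential inequality $\frac{d}{dt}\|w\|_{H^k}^2\le C(\|w\|_{H^k}^2+\|w\|_{H^k}^6)$. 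Second, your closing step relies on absorbing the factor $C\sqrt t\,(5\Lambda)^2$ --- which only works verbatim for $t$ small and needs a subdivision of $[0,T]$ into short subintervals to propagate the bound (this is the ``Gr\"onwall avatar'' you allude to but do not spell out); the paper instead runs a Bihari-type argument on $\Phi(t)=\delta+\int_0^t\|w\|_{H^k}^6$, integrates $\dot\Phi/\Phi^3\le C$ explicitly, and uses the bootstrap threshold $\|w(t)\|_{H^k}<1$ rather than $\|\hat u\|_{X_s^k}\le 2\Lambda$. Your route is arguably cleaner conceptually (it reduces to a linear estimate once the bootstrap is in place), while the paper's route avoids the subinterval iteration at the cost of a nonlinear Gr\"onwall. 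Both are valid; just make the subdivision step explicit so that the dependence of $\delta$ on $T$ and $\Lambda$ alone is transparent.
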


\noindent
\textbf{Notation} Let $\mathcal{R}$ be the mapping that takes a triple $(u_0,\varphi, \eta)$ to the solution $u$ of \eqref{eq_extension} and $\mathcal{R}_t(u_0,\varphi, \eta)$ is the restriction of $\mathcal{R}(u_0,\varphi, \eta)$ at time $t.$ That is, $\mathcal{R}_t$ takes $(u_0,\varphi, \eta)$ to $u(t),$ where $u(t,x)$ is the solution of \eqref{eq_extension}.

\medskip

Throughout the paper, $C>0$ denotes a generic positive constant which may change from line to line. The dependence of $C$ on parameters will be stated explicitly whenever relevant.
\begin{prop}[Stability with respect to initial data]\label{Lipschitz type} 
Let $k > d/2$.	Assume that $u_0, \hat u_0 \in B_{H^k}(0,R)$ for some $R>0$, 
	and $\eta \in L^2_{\mathrm{loc}}(0,\infty; H^{k-2}(\TT))$. Then, for any $T$ satisfying 
	$0 \le T < \min\{\mathcal T(u_0,\eta),\, \mathcal T(\hat u_0, \eta)\}$, there exists a constant 
	$C>0$ only depending on $T$ 
	such that
	\begin{equation}\label{stab_est}
		\sup_{0 \le t \le T}
		\|\mathcal{R}_t(u_0, 0, \eta) - \mathcal{R}_t(\hat{u}_0, 0, {\eta })\|_{H^{k}}
		\;\le\;
		C\,
		\|u_0 - \hat u_0\|_{H^{k}}. 
	\end{equation}
\end{prop}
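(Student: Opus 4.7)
The plan is to set $w := u - \hat u$, where $u = \mathcal R_{\cdot}(u_0,0,\eta)$ and $\hat u = \mathcal R_{\cdot}(\hat u_0,0,\eta)$, and to establish a Gr\"onwall-type inequality for $\|w(t)\|_{H^k}$ with a weakly singular kernel. Writing $u^3-\hat u^3 = w\,(u^2+u\hat u+\hat u^2)$, the difference solves the linear problem
\begin{equation*}
w_t +\Delta^2 w + \Delta w \;=\; \Delta\bigl(w(u^2+u\hat u+\hat u^2)\bigr), \qquad w(0)=u_0-\hat u_0.
\end{equation*}
By Duhamel's formula, with $\mathcal S(t)$ the semigroup generated by $\mathcal A$ defined in \eqref{abstract},
\begin{equation*}
w(t) \;=\; \mathcal S(t) w(0) \;+\; \int_0^t \mathcal S(t-s)\,\Delta\!\bigl(w(s)(u^2+u\hat u+\hat u^2)(s)\bigr)\,\d s.
\end{equation*}

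The next step is to record two analytic facts. First, since $k>d/2$, the space $H^k(\TT)$ is a Banach algebra, so
\begin{equation*}
\|w\,(u^2+u\hat u+\hat u^2)\|_{H^k} \;\le\; C\,\|w\|_{H^k}\bigl(\|u\|_{H^k}^2+\|\hat u\|_{H^k}^2\bigr).
\end{equation*}
Second, a direct Fourier computation on $\TT$ shows that the symbol of $\mathcal A$ is $|k|^2-|k|^4$, which is nonnegative only on finitely many modes and behaves like $-|k|^4$ at infinity; consequently one obtains the smoothing estimate
\begin{equation*}
\|\mathcal S(t) f\|_{H^k} \;\le\; C(T)\,t^{-1/2}\,\|f\|_{H^{k-2}}, \qquad 0<t\le T,
\end{equation*}
since $\Delta$ costs two derivatives, exactly half the parabolic order of $\mathcal A$. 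Before applying this, I use \Cref{wellposed2} together with the assumption $T<\min\{\mathcal T(u_0,\eta),\mathcal T(\hat u_0,\eta)\}$ to fix a finite bound $M=M(R,T,\|\eta\|)$ with $\|u(t)\|_{H^k}+\|\hat u(t)\|_{H^k}\le M$ on $[0,T]$; this is essential because otherwise the quadratic factor cannot be controlled uniformly.

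Combining these ingredients and using $\|\Delta\cdot\|_{H^{k-2}}\le C\|\cdot\|_{H^k}$, the Duhamel identity yields
\begin{equation*}
\|w(t)\|_{H^k} \;\le\; C\,\|w(0)\|_{H^k} \;+\; C\,M^2 \int_0^t (t-s)^{-1/2}\,\|w(s)\|_{H^k}\,\d s, \qquad t\in[0,T].
\end{equation*}
The singular Gr\"onwall (Henry) inequality then gives $\|w(t)\|_{H^k}\le C\,\|w(0)\|_{H^k}$ uniformly on $[0,T]$, which is precisely \eqref{stab_est}.

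The main obstacle, in my view, is that the nonlinearity appears under a Laplacian, so one cannot simply differentiate $\|w\|_{H^k}^2$ and absorb the nonlinear term by an integration by parts at this level of regularity; one must genuinely exploit the parabolic smoothing of $\mathcal S(t)$ at the cost of a singular-in-time kernel, and then close the argument through Henry's inequality. A secondary delicate point is to ensure that the $H^k$-bound $M$ on $u,\hat u$ is truly uniform on $[0,T]$, which uses that $T$ lies strictly below both maximal existence times together with the continuation alternative of \Cref{wellposed2}.
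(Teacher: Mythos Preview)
Your argument is correct, but it follows a genuinely different route from the paper. The paper uses a direct energy method: it multiplies the equation for $w$ by $w$ and by $(-\Delta)^k w$, integrates by parts, and uses Young's inequality to absorb the nonlinear contribution into the dissipation terms $\|\Delta w\|_{L^2}^2$ and $\|(-\Delta)^{(k+2)/2}w\|_{L^2}^2$. This yields a differential inequality of the form $\tfrac{d}{dt}\|w\|_{H^k}^2\le C\|w\|_{H^k}^2(1+\|u\|_{H^k}^4+\|\hat u\|_{H^k}^4)$, closed by a standard Gr\"onwall argument (with a small-time/subdivision trick at the end to make the constant depend only on $T$ and $R$).

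Your approach instead exploits the analytic smoothing $\|\mathcal S(t)\|_{H^{k-2}\to H^k}\le C(T)\,t^{-1/2}$ to absorb the extra Laplacian on the nonlinearity at the level of the mild formulation, and closes with Henry's singular Gr\"onwall lemma. This is clean and modular, and avoids the integration-by-parts bookkeeping. One remark: your closing comment that ``one cannot simply differentiate $\|w\|_{H^k}^2$ and absorb the nonlinear term by an integration by parts at this level of regularity'' is not accurate here---the paper does precisely that. The point is that the fourth-order dissipation gains two derivatives over the $H^k$ level, exactly matching the Laplacian in $\Delta(w(u^2+u\hat u+\hat u^2))$ after one integration by parts, so the energy method is perfectly adequate. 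Both approaches rely in the same way on the uniform bound $M$ on $\|u\|_{H^k}+\|\hat u\|_{H^k}$ over $[0,T]$, guaranteed by $T<\min\{\mathcal T(u_0,\eta),\mathcal T(\hat u_0,\eta)\}$ and the blow-up alternative.
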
	
	The proof of above results are presented in Appendix \ref{pr_of_1}, Appendix \ref{pr_of_2}, and Appendix \ref{pr_of_3}, respectively.
Let us present a global well-posedness for \eqref{eq_main} in $X_T^0$.
\begin{prop}[Global well-posedness]\label{global_wellposed}
	Let $T>0.$ Then for any $u_0\in L^2(\TT)$ and any $\eta\in L^2_{\mathrm{loc}}(0,\infty;H^{-2}(\TT))$, there exists a unique mild solution $u\in X_T^0$ for system \eqref{eq_main}.
\end{prop}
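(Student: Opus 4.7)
The plan is to combine a global a priori energy estimate, a Galerkin approximation scheme for existence, and an energy argument on the difference for uniqueness. The decisive structural feature is the favorable sign of the cubic nonlinearity when tested against $u$.

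\textbf{A priori estimate.} Formally testing \eqref{eq_main} against $u$ and integrating by parts twice yields
\[
\frac{1}{2}\frac{d}{dt}\|u\|_{L^2}^2 + \|\Delta u\|_{L^2}^2 + 3\!\int_\TT u^2|\nabla u|^2\,dx \;=\; \|\nabla u\|_{L^2}^2 + \langle \eta,u\rangle_{H^{-2},H^2},
\]
the nonlinear contribution $-3\int u^2|\nabla u|^2$ entering with favorable sign. Combining the interpolation $\|\nabla u\|_{L^2}^2 \leq \tfrac{1}{2}\|\Delta u\|_{L^2}^2+C\|u\|_{L^2}^2$, the duality estimate $|\langle \eta,u\rangle|\leq\|\eta\|_{H^{-2}}\|u\|_{H^2}$ absorbed by Young, and Gronwall, one would obtain, for every $T>0$, the global bound
\[
\|u\|_{X_T^0}^2 + \int_0^T\!\!\!\int_\TT u^2|\nabla u|^2\,dx\,dt \;\leq\; C_T\bigl(\|u_0\|_{L^2}^2 + \|\eta\|_{L^2(0,T;H^{-2})}^2\bigr).
\]
Since $\nabla(u^2)=2u\nabla u$, the bonus integral above is equivalent to $u^2 \in L^2(0,T;H^1(\TT))$, a structural gain I will use crucially in the limit passage below.

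\textbf{Existence by Galerkin.} Let $P_N$ denote the $L^2$-orthogonal projection onto the span of Fourier modes $\{|k|\leq N\}$. The projected problem $\partial_t u_N+\Delta^2 u_N+\Delta u_N = P_N\Delta(u_N^3)+P_N\eta$ with $u_N(0)=P_N u_0$ is an ODE on a finite-dimensional space with polynomial nonlinearity, so local-in-time existence is standard; the energy estimate above applies verbatim to the Galerkin approximants and yields global existence together with a uniform bound in $X_T^0$. Along a subsequence, Banach--Alaoglu provides a weak limit $u\in L^\infty(0,T;L^2)\cap L^2(0,T;H^2)$; reading the equation, $\partial_t u_N$ is uniformly bounded in $L^2(0,T;H^{-2})$, so Aubin--Lions delivers strong convergence $u_N\to u$ in $L^2(0,T;H^s)$ for every $s<2$. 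The subtle point is identifying the limit of $\Delta(u_N^3)$: factorizing $u_N^3 = u_N\cdot u_N^2$ and combining strong convergence of $u_N$ with the uniform $L^2(0,T;H^1)$-bound on $u_N^2$ (the bonus estimate from Step~1) allows one to pass to the limit in a distributional sense; standard arguments then identify $u\in X_T^0$ as a mild solution of \eqref{eq_main}.

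\textbf{Uniqueness and main obstacle.} For two solutions $u_1,u_2 \in X_T^0$ with identical data, the difference $w=u_1-u_2$ solves $w_t+\Delta^2 w+\Delta w = \Delta\bigl(w(u_1^2+u_1u_2+u_2^2)\bigr)$ with $w(0)=0$. Testing by $w$, using Sobolev embeddings and the a priori bounds of Step~1 on the coupling factor, one reaches a differential inequality $\tfrac{d}{dt}\|w\|_{L^2}^2 + \|\Delta w\|_{L^2}^2 \leq g(t)\|w\|_{L^2}^2$ with $g\in L^1(0,T)$, and Gronwall forces $w\equiv 0$. The hard part throughout is controlling the cubic nonlinearity at the low $L^2$-regularity level: at $u_0 \in L^2$ the naive fixed-point argument in $X_T^0$ stumbles on the estimate $\Delta(u^3)\in L^2(0,T;H^{-2})$ because $X_T^0 \hookrightarrow L^6(0,T;L^6)$ fails in the higher-dimensional cases. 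The essential input that resolves this is the favorable sign of the energy identity, which upgrades $u^2$ to $L^2(0,T;H^1)$ and supplies precisely the extra compactness needed to close both the Galerkin limit passage and the difference estimate.
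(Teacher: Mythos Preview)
Your proposal is sound and rests on the same decisive observation as the paper --- the favorable sign of $\int_{\TT} u\,\Delta(u^3)=-3\int_{\TT}u^2|\nabla u|^2\le 0$ in the $L^2$ energy identity --- but the overall architecture is different. The paper does not run a Galerkin scheme: it invokes a local well-posedness result in $X_T^0$ together with the blow-up alternative (if the maximal time $\mathcal T$ is finite then $\|u(t)\|_{L^2}\to\infty$ as $t\to\mathcal T^-$), and then uses the very same a priori energy bound you derived to show $\|u(t)\|_{L^2}$ stays bounded, forcing $\mathcal T=+\infty$. Uniqueness comes for free from the local theory, so no separate difference estimate is written out.

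Your route is more self-contained: the Galerkin construction plus the ``bonus'' $u^2\in L^2(0,T;H^1)$ estimate lets you build the solution directly and pass to the limit in the cubic term without appealing to a local $L^2$ theory, which the paper in fact only states for $H^k$ with $k>d/2$ (Proposition~2.4) and tacitly assumes at the $L^2$ level here. The price you pay is having to justify the nonlinear limit and the uniqueness inequality explicitly, and your sketch of the latter (obtaining $g\in L^1(0,T)$) is correct in low dimensions but would need the bonus estimate or a more careful interpolation to close in general; the paper sidesteps this by packaging everything into the local result. In short: same engine, different chassis --- the paper's argument is shorter but leans on an unstated $L^2$ local existence, while yours is constructive and exposes the structural gain from the sign of the nonlinearity more transparently.
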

The proof of \Cref{global_wellposed} is based on suitable energy estimates and is presented in Appendix \ref{app_globalwellposed}.

\section{Flow, asymptotic and saturation properties}\label{sec_tech}
This section presents several technical results that play a central role in proving our main theorems. We begin with a standard result concerning the properties of the nonlinear semigroup.
\begin{lem}Assume $u_0 \in H^{k}(\mathbb{T}^d)$, $\varphi\in H^{k+2}(\TT)$ and 
	$\eta \in L^2_{\mathrm{loc}}(0,\infty;H^{k-2}(\TT))$. The following properties hold:
	\begin{enumerate}
		\item Let $\delta \in (0, \mathcal T(u_0,\varphi,\eta))$. The flow property holds:  
		\begin{equation}\label{flow_property}
			\mathcal R_{\delta}(u_0,\varphi,\eta)=\mathcal R_{\delta}(u_0+\varphi,0,\eta)-\varphi.
		\end{equation}
		\item Let us define
		\begin{equation*}
			\eta(s) =
			\begin{cases}
				\eta_1(s) & s \in [0, t_1),\\
				\eta_2(s) & s \in [t_1, t_1 + t_2),\\
				\eta_3(s) & s \in [t_1 + t_2, t_1 + t_2 + t_3),
			\end{cases}
		\end{equation*}
		for some $t_1, t_2, t_3 > 0$ satisfying $t_1 + t_2 + t_3< \mathcal T(u_0, \eta)$. Then the solution $\mathcal R(u_0, 0, \eta)$ of \eqref{eq_extension} satisfies the concatenation property
		\begin{equation}\label{flow2}
			\mathcal R_{t_1 + t_2 + t_3}(u_0, 0, \eta)
			= \mathcal R_{t_3}\!\left(
			\mathcal R_{t_2}\!\left(
			\mathcal R_{t_1}(u_0, 0, \eta_1(\cdot)),
			0, \eta_2(\cdot + t_1)
			\right),
			0, \eta_3(\cdot + t_1 + t_2)
			\right).
		\end{equation}
	\end{enumerate}
\end{lem}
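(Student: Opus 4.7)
My plan is to treat the two items as essentially computational consequences of the mild formulation of \eqref{eq_extension}, together with uniqueness (\Cref{wellposed2}) and time--translation invariance of the autonomous semigroup $\{\mathcal S(t)\}_{t\geq 0}$ generated by $\mathcal A$ in \eqref{abstract}.

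For the flow property \eqref{flow_property}, I would set $v(t,x):=u(t,x)+\varphi(x)$ where $u=\mathcal R(u_0,\varphi,\eta)$. Since $\varphi$ is time-independent, a direct substitution into the equation shows that $v_t=-\Delta^2 v-\Delta v+\Delta v^3+\eta$ with initial datum $v(0)=u_0+\varphi$. So, formally, $v$ solves the equation \eqref{eq_extension} with $\varphi$ replaced by $0$ and initial data $u_0+\varphi$. To make this rigorous at the level of mild solutions, I would write the Duhamel formulation for $u$, namely
\begin{equation*}
u(t)=\mathcal S(t)u_0+\int_0^t\mathcal S(t-s)\bigl[\mathcal A\varphi+\Delta(u+\varphi)^3+\eta(s)\bigr]ds,
\end{equation*}
add $\varphi$ to both sides and use the standard semigroup identity $\mathcal S(t)\varphi-\varphi=\int_0^t\mathcal S(t-s)\mathcal A\varphi\,ds$, which is valid here because $\varphi\in H^{k+2}(\TT)$ lies in the natural domain of $\mathcal A$ at the $H^k$ scale. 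This rearrangement yields exactly the mild formulation satisfied by $\mathcal R(u_0+\varphi,0,\eta)$, and the identity \eqref{flow_property} then follows from the uniqueness of mild solutions asserted in \Cref{wellposed2}, provided $\delta<\mathcal T(u_0,\varphi,\eta)$ is small enough that both sides exist.

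For the concatenation property \eqref{flow2}, I would build the candidate right-hand side explicitly and invoke uniqueness again. More precisely, let $u:=\mathcal R(u_0,0,\eta)$ on $[0,t_1+t_2+t_3]$. On $[0,t_1]$, only $\eta_1$ enters the equation, so $u(t)=\mathcal R_t(u_0,0,\eta_1)$ by uniqueness, in particular $u(t_1)=\mathcal R_{t_1}(u_0,0,\eta_1)$. On $[t_1,t_1+t_2]$, I would set $w(s):=u(t_1+s)$ for $s\in[0,t_2]$ and check from the Duhamel formula, using the semigroup property $\mathcal S(t_1+s)=\mathcal S(s)\mathcal S(t_1)$, that $w$ satisfies the mild formulation of \eqref{eq_extension} with $\varphi=0$, initial datum $u(t_1)$, and shifted control $\eta_2(\cdot+t_1)$. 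Uniqueness then gives $w(s)=\mathcal R_s\bigl(\mathcal R_{t_1}(u_0,0,\eta_1),0,\eta_2(\cdot+t_1)\bigr)$, and iterating once more on $[t_1+t_2,t_1+t_2+t_3]$ delivers \eqref{flow2}.

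Neither step presents a serious obstacle; the only point that requires mild care is the first one, where one must ensure that the term $\mathcal A\varphi$ generated by the substitution is absorbed cleanly by the semigroup identity. This is why the hypothesis $\varphi\in H^{k+2}(\TT)$ (giving $\mathcal A\varphi\in H^{k-2}(\TT)$, i.e.\ in the same class as the source $\eta$) is precisely the right regularity to run the argument without losing derivatives or leaving the class in which \Cref{wellposed2} applies.
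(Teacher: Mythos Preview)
Your proposal is correct and follows essentially the same route as the paper: for \eqref{flow_property} you set $v=u+\varphi$, check it solves the $\varphi=0$ problem with shifted initial data, and invoke uniqueness; for \eqref{flow2} you iterate uniqueness together with time translation of the autonomous equation. The paper carries out exactly these two steps, only more tersely (it skips the Duhamel manipulation and just writes ``it can be checked that $v$ satisfies\dots'', and it introduces a two-time resolvent $\widetilde{\mathcal R}(t,s,\cdot,\cdot)$ to encode the concatenation), so your extra care with the mild formulation and the identity $\mathcal S(t)\varphi-\varphi=\int_0^t\mathcal S(t-s)\mathcal A\varphi\,ds$ is a welcome elaboration rather than a different argument.
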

\begin{proof}

		Let us denote $v=u+\varphi,$ where $u$ is the solution of \eqref{eq_extension}. It can be checked that $v$ satisfies the following equation
		\begin{equation*}
			\begin{cases}
				v_t +\Delta^2 v +\Delta v=\Delta (v^3)+\eta, \\
				v(0) = u_0+\varphi.
			\end{cases}
		\end{equation*}
		By uniqueness of the solution, we have $v=u+\varphi.$ This implies $\mathcal R_{\delta}(u_0+\varphi,0,\eta)=\mathcal R_{\delta}(u_0,\varphi, \eta)+\varphi.$

		Let $0 < s \le t$ and let $ \widetilde{ \mathcal R}(t,s,v,\eta)$ denote the solution of \eqref{eq_extension} at time $t$ with 
		$\varphi=0$ and initial condition
		$	\widetilde{\mathcal R}(s,s,v,\eta)=v.$
		In particular,
		$ \mathcal R_t(u_0,0,\eta)=\widetilde{\mathcal R}(t,0,u_0,\eta).
		$ By the uniqueness of solution, for any $\sigma\in [s,t]$ we have
		\begin{equation}\label{eq:flow_property}
			\widetilde{\mathcal R}(t,\sigma,\widetilde{\mathcal R}(\sigma,s,u_0,\eta),\eta)
			= \widetilde{\mathcal R}(t,s,u_0,\eta).
		\end{equation}
		Using \eqref{eq:flow_property}, we obtain
		\begin{equation*}
			\begin{aligned}
				&\widetilde{\mathcal R}(t_1+t_2+t_3,0,u_0,\eta)\\
				&= \widetilde{\mathcal R}\!\left(t_1+t_2+t_3,\, t_1+t_2,\,
				\widetilde{\mathcal R}\left(t_1+t_2,t_1,\widetilde{\mathcal R}\left(t_1,0,u_0,\eta_1(\cdot)\right),\, \eta_2(\cdot)\right),\,
				\eta_3(\cdot)\right)
				\\
				&= \widetilde{\mathcal R}\!\left(t_3,\,0,\,
				\widetilde{\mathcal R}\left(t_1+t_2,t_1,\widetilde{\mathcal R}\left(t_1,0,u_0,\eta_1(\cdot)\right),\, \eta_2(\cdot )\right),\,
				\eta_3(\cdot + t_1 +t_2)\right)
				\\
				&= \mathcal R_{t_3}\!\left(
				\widetilde{\mathcal R}\left(t_1+t_2,t_1,\widetilde{\mathcal R}\left(t_1,0,u_0,\eta_1(\cdot)\right), \eta_2(\cdot)\right),
				0,\,
				\eta_3(\cdot +t_1 + t_2)
				\right)
				\\
				&= \mathcal R_{t_3}\!\left(
				\mathcal R_{t_2}\left( \mathcal R_{t_1}\left(u_0,0,\eta_1(\cdot)\right),\, 0,\, \eta_2(\cdot +t_1\right)),
				0,\,
				\eta_3(\cdot + t_1 +t_2)
				\right).
			\end{aligned}
		\end{equation*}
	This completes the proof.
\end{proof}
\subsection{Asymptotic property}
This section is devoted to the study of the behavior of solutions to the perturbed equation \eqref{eq_extension} when  $\eta$ is independent of time. The following result guarantees 
the existence of solution on some finite time interval $[0, \delta]$ for some $\delta > 0$, when 
$\varphi$ and $\eta$ are sufficiently large. Let  us consider the system
\begin{equation}\label{eq_extension_delta}
	\begin{cases}
		u_t +\Delta^2 \left(u + \delta^{-\frac{1}{3}}\varphi\right) +\Delta\left(u+\delta^{-\frac{1}{3}}\varphi\right)=\Delta \left(u + \delta^{-\frac{1}{3}}\varphi\right)^3+{{\delta}^{-1}}\eta, & t>0, \,\, \, x \in \TT,\\
		u(0) = u_0, &  x \in \TT. 
	\end{cases}
\end{equation}
We now state an asymptotic property of the solution to \eqref{eq_extension_delta}, which is one of the key ingredient in proving the main global approximate control result Theorem \ref{small_time}.
\begin{prop}\label{prop_asympt}
Let $k\in \mathbb{N}^*$ be such that  $k>{d}/{2}.$ Assume that $u_0\in H^{k+2}(\TT)$, $\varphi \in H^{3k+2}(\TT)$, and $\eta \in H^{k+2}(\TT)$.
	Then there exists $\delta_0>0$ such that for all $\delta \in (0,\delta_0)$, the solution $u(t)=\mathcal R_t(u_0, \delta^{-\frac{1}{3}}\varphi, {\delta}^{-1}\eta)$ of \eqref{eq_extension_delta} is well-defined in the time interval $[0,\delta].$ Moreover it satisfies the following property:
	\begin{equation}\label{asymp}
		\lim_{\delta\to 0^+}u(\delta)=
		u_0 + \eta+\Delta(\varphi^3)
		\quad \text{in } H^k(\TT).
	\end{equation}
	\end{prop}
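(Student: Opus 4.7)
My approach is a time-rescaling that turns \eqref{eq_extension_delta} into a singular perturbation over the fixed interval $[0,1]$. Setting $s=t/\delta$ and $\tilde u(s):=u(\delta s)$, expanding $(\tilde u+\delta^{-1/3}\varphi)^3$ and multiplying by $\delta$, the equation becomes
\begin{equation*}
\tilde u_s \;=\; \delta\,\mathcal A\tilde u \;+\; (\eta + \Delta\varphi^3) \;+\; R_\delta(\tilde u,\varphi),\qquad \tilde u(0) = u_0,
\end{equation*}
with
\begin{equation*}
R_\delta(\tilde u,\varphi) := \delta^{2/3}\mathcal A\varphi + \delta\,\Delta\tilde u^3 + 3\delta^{2/3}\Delta(\tilde u^2\varphi) + 3\delta^{1/3}\Delta(\tilde u\varphi^2).
\end{equation*}
Every summand of $R_\delta$ carries a positive power of $\delta$, the smallest being $\delta^{1/3}$. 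Formally, as $\delta\to 0^+$ the limit ODE $w_s=\eta+\Delta\varphi^3$, $w(0)=u_0$, yields $w(1)=u_0+\eta+\Delta\varphi^3$, which is exactly the right-hand side of \eqref{asymp}.

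The first and most delicate step is to establish a uniform a priori bound $\|\tilde u\|_{C([0,1];H^{k+2})}\le M$ valid for all $\delta\in(0,\delta_0]$. I would run an energy estimate on $\|\tilde u\|_{H^{k+2}}^2$: the linear contribution yields $\langle\delta\mathcal A\tilde u,\tilde u\rangle_{H^{k+2}}\le -c\delta\|\tilde u\|_{H^{k+4}}^2+C\delta\|\tilde u\|_{H^{k+2}}^2$ thanks to the Fourier structure of $\mathcal S$. The nonlinear terms are treated by integration by parts combined with Kato--Ponce commutator estimates: one extracts the sign-definite pieces $\int\tilde u^2|(1-\Delta)^{(k+2)/2}\nabla\tilde u|^2\ge 0$ and $\int\varphi^2|(1-\Delta)^{(k+2)/2}\nabla\tilde u|^2\ge 0$, and absorbs the commutator remainders into $\delta\|\tilde u\|_{H^{k+4}}^2$ via appropriately scaled Young's inequalities. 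The scaling is chosen so that the small prefactors $\delta,\delta^{2/3},\delta^{1/3}$ leave only terms of the form $\delta^{\alpha}\|\tilde u\|_{H^{k+2}}^p$ with $\alpha>0$, which close by Gronwall on a ball of radius $M$ once $\delta_0$ is small. The strong regularity assumption $\varphi\in H^{3k+2}$ supplies the slack required in these product estimates. As a by-product, one obtains $\mathcal T(u_0,\delta^{-1/3}\varphi,\delta^{-1}\eta)>\delta$, i.e. the well-posedness on $[0,\delta]$ claimed in the statement.

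To conclude, I compare $\tilde u$ with the explicit profile $w(s)=u_0+s(\eta+\Delta\varphi^3)$ via Duhamel:
\begin{align*}
\tilde u(s)-w(s) &= (\mathcal S(\delta s)-I)u_0 + \int_0^s \bigl(\mathcal S(\delta(s-\tau))-I\bigr)(\eta+\Delta\varphi^3)\,d\tau \\
&\quad + \int_0^s \mathcal S(\delta(s-\tau))\,R_\delta(\tilde u(\tau),\varphi)\,d\tau.
\end{align*}
The first two pieces vanish in $H^k$ as $\delta\to 0^+$ by the quantitative strong continuity $\|(\mathcal S(t)-I)v\|_{H^k}\le C\sqrt t\,\|v\|_{H^{k+2}}$ (a consequence of $|e^{-t(|k|^4-|k|^2)}-1|\le C\sqrt t\,|k|^2$ for $|k|\ge 2$ and vanishing for $|k|\le 1$), applied with $v=u_0,\eta,\Delta\varphi^3$; all three belong to $H^{k+2}$ under the hypotheses, since $\varphi\in H^{3k+2}\subset H^{k+4}$. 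The third piece is $O(\delta^{1/3})$ in $H^k$: using the uniform bound from the previous step and the algebra property of $H^{k+2}$ (valid since $k+2>d/2$), each summand of $R_\delta$ is controlled in $H^k$ by $C\delta^{\alpha}$ with $\alpha\ge 1/3$. Evaluating at $s=1$ gives $\|u(\delta)-(u_0+\eta+\Delta\varphi^3)\|_{H^k}\to 0$, which is exactly \eqref{asymp}.

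The main obstacle is the uniform $H^{k+2}$ bound: the cubic $\delta\Delta\tilde u^3$ loses two derivatives relative to the target norm and the quasilinear term $3\delta^{1/3}\Delta(\tilde u\varphi^2)$ has the smallest $\delta$-gain, so a naive Young's inequality would produce a destructive negative power of $\delta$. Extracting the sign-definite integrals $\int\varphi^2|\cdot|^2\ge 0$ and $\int\tilde u^2|\cdot|^2\ge 0$ and balancing Young's parameter against the dissipation $\delta\|\tilde u\|_{H^{k+4}}^2$ are the structural inputs that keep the bound uniform as $\delta\to 0^+$; without this observation the rescaled solution could not be controlled over the full interval $[0,1]$.
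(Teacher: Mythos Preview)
Your approach is correct in outline and genuinely different from the paper's. The paper works directly with the difference $v(s)=\tilde u(s)-w(s)$ at the $H^k$ level and derives $\frac{d}{ds}\|v\|_{H^k}^2\le C\delta^{1/3}(1+\|v\|_{H^k}^6)$, closing by a continuation argument. Crucially, the dangerous contribution $\mathcal P_3=3\delta^{1/3}\int(-\Delta)^k v\,\Delta((v+w)\varphi^2)$ is handled \emph{without} using the dissipation: an integration-by-parts lemma (Appendix~B of the paper) transfers all excess derivatives onto $\varphi$, producing directly $|\mathcal P_3|\le C\delta^{1/3}\|\varphi\|_{H^{3k+2}}^2(\|v\|_{H^k}^2+\|w\|_{H^k}^2)$; this is where the requirement $\varphi\in H^{3k+2}$ originates. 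By contrast, you work at the higher regularity $H^{k+2}$ on $\tilde u$ itself, extract the sign-definite leading piece $-3\delta^{1/3}\int\varphi^2|(1-\Delta)^{(k+2)/2}\nabla\tilde u|^2$, and then \emph{do} exploit the dissipation $\delta\|\tilde u\|_{H^{k+4}}^2$ to absorb the Kato--Ponce remainders via interpolation $\|\tilde u\|_{H^{k+3}}\le\|\tilde u\|_{H^{k+2}}^{1/2}\|\tilde u\|_{H^{k+4}}^{1/2}$ and Young with exponents $(4,4/3)$; once the uniform $H^{k+2}$ bound is in hand, your Duhamel comparison is clean and yields the explicit rate $O(\delta^{1/3})$.

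What each buys: the paper's one-step argument stays at the target regularity $H^k$ and never needs semigroup smoothing, at the price of the combinatorial appendix. Your two-step route (a priori bound, then convergence) is conceptually tidy and in fact uses much less of $\varphi$ in the energy step (roughly $H^{k+4}$ suffices), but the crux---that after sign extraction the $\delta^{1/3}$-remainder really splits as $\epsilon\delta\|\tilde u\|_{H^{k+4}}^2+C_\epsilon\delta^{1/9}\|\tilde u\|_{H^{k+2}}^2$---is asserted rather than shown. That balance is correct, but in a write-up it deserves one explicit line, since this is exactly the place where a naive Young's inequality would fail.
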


\begin{rmk}
	This asymptotic property \eqref{asymp}  allows us to steer the controlled trajectory of \eqref{eq_extension_delta} in small time to any neighborhood of target $u_1$ belonging to the affine space $u_0 + \mathscr{H}_1$, where $\mathscr H_1$ is the vector space generated by elements of the form \begin{equation}\label{asymp_h}\eta + \sum_{k=1}^{m} \Delta(\varphi_k^3),\end{equation}
	 for some integer $m \ge 1$ and vectors $\eta, \varphi_1, \ldots, \varphi_m \in \mathscr H_0$ (see \Cref{sec_sat} for the detailed expression of $\mathscr H_1$). By iterating this argument, we show that starting from $u_0$ we can also 
	approximately reach any point belongs to $u_0 + \mathscr H_2$ in small time, where the space $\mathscr H_2$ is defined as \eqref{asymp_h}, but now with vectors $\eta, \varphi_1, \ldots, \varphi_m \in \mathscr H_1$.
	In this way, we construct a non-decreasing sequence of subspaces $\{\mathscr H_j\}_{j\ge1}$ such that every point in $u_0 + \mathscr H_j$ is approximately reachable from $u_0$ by means of a control taking values only on $\mathscr H_0$. Using saturation property of $\mathscr H_0$ (see Definition \ref{def_sat} and \Cref{thm_sat}), we deduce that $\cup_{j=0}^{\infty}\mathscr H_j	$ is dense in $H^k(\mathbb{T}^d)$. This implies that one can approximately control \eqref{eq_extension_delta} to any point in $H^k(\mathbb{T}^d)$ in small time. Finally, controllability in arbitrary time $T>0$ is obtained by steering the system close to the target $u_1$ in small time, and then keeping the trajectory near $u_1$ for a sufficiently long period by applying an appropriate control, see the proof of Theorem \ref{small_time}.
	
\end{rmk}
\subsubsection{Proof of \Cref{prop_asympt}}
\begin{proof}
Let us assume that $u_0\in H^{k+2}(\TT)$, $\varphi \in H^{3k+2}(\TT)$ and $\eta \in H^{k+2}(\TT)$. 
Thanks to \Cref{wellposed2}, equation \eqref{eq_extension_delta} has a unique maximal solution defined on $[0,\mathcal T^{\delta}),$ where $\mathcal T^{\delta}=\mathcal{T}(u_0, \delta^{-\frac{1}{3}}\varphi, \delta^{-{1}} \eta).$ Thus $u \in C([0,\mathcal T^\delta); H^k(\TT)).$
We denote $u(t) = \mathcal R_t(u_0, \delta^{-\frac{1}{3}}\varphi, {\delta^{-1}}\eta)$ for all $t \in [0, \mathcal T_\delta)$.  Moreover if $\mathcal T^\delta<+\infty$ we have 
$	\|u(t)\|_{H^{k}} \longrightarrow +\infty 
	\text{ as } t\to \mathcal {T^\delta}^{-}.$
Our aim is to show that there exists $\delta_0 > 0$ such that $\mathcal T_\delta > \delta$ for all $\delta \in (0, \delta_0)$ and thus $u \in C([0, \delta]; H^k(\TT))$. We will also deduce that the limit \eqref{asymp} holds. 
In order to prove this, we estimate the $H^k$-norm of $u(t)$. 
Let us introduce the functions
\begin{align}\label{eq_vw}
	\notag&	w(t):=u_0+t(\eta+\Delta(\varphi^3)),\\
	&v(t):=u(\delta t)-w(t),
\end{align}
which are well-defined for $t<\delta^{-1}\mathcal T^\delta.$ Our goal is to prove that $u(\delta)\longrightarrow u_0+(\eta+\Delta(\varphi^3))$ in $H^k(\TT)$ as $\delta\to 0^+.$ Thus, it suffices to show that \begin{align}\label{v1}\norm{v(1)}_{H^k}\longrightarrow 0 \text{ as } \delta \to 0^+.
\end{align}
Before proving \eqref{v1}, we need to ensure the existence of $\delta_0>0$ small enough such that, for all $\delta\in (0,\delta_0),$ $v$ is well defined in $[0,1],$ which means we need to prove that $\delta^{-1}\mathcal T^\delta> 1.$

\noindent
To simplify the computations, we assume throughout the proof that $\delta\in (0,1).$
Let us take $t< \min\{2, \delta^{-1}\mathcal T^\delta\}.$ Observe that  $v$ is a solution of the following problem
\begin{equation}\label{eq_extension_delta2}
	\begin{cases}
		v_t +\delta \Delta^2 \left(v+w + \delta^{-\frac{1}{3}}\varphi\right) +\delta \Delta\left(v+w+\delta^{-\frac{1}{3}}\varphi\right)=\delta \Delta \left(v+w + \delta^{-\frac{1}{3}}\varphi\right)^3-\Delta(\varphi^3),  \\
		v(0) = 0.
	\end{cases}
\end{equation}
As we take $t<2,$ from \eqref{eq_vw}, we have $\norm{w}_{H^k}\leq C\left(\norm{u_0}_{H^k}+\norm{\eta}_{H^k}+\norm{\varphi}^3_{H^{k+2}}\right)$ for some constant $C>0$. Multiplying equation \eqref{eq_extension_delta2} by $ v$ and integrating over $\TT$, we obtain after performing integration by parts
\begin{align}\label{est4}
	\notag	&\frac{1}{2} \frac{d}{dt}  \|v\|_{L^2}^2 
	+\delta\norm{\Delta v}_{L^2}^2
	\leq  \delta \| \Delta v\|_{L^2}\| \Delta w\|_{L^2} +\delta^{\frac{2}{3}} \| \Delta \varphi\|_{L^2 }\| \Delta v\|_{L^2 }\\
	\notag	&+ \delta \| \Delta v\|_{L^2 } \| v\|_{L^2 } +\delta \|  v\|_{L^2 }\| \Delta w\|_{L^2 } +\delta^{\frac{2}{3}} \| \Delta \varphi\|_{L^2 }\| v\|_{L^2 } \\
	&\qquad+\underbrace{\int_{\TT}v \left(\delta \Delta \left(v+w + \delta^{-\frac{1}{3}}\varphi\right)^3-\Delta(\varphi^3)\right)}_{\mathcal Q}.\end{align}
We now estimate $\mathcal Q:=\mathcal Q_1+\mathcal Q_2+\mathcal Q_3,$ where the explicit expressions for $\mathcal Q_i$ are the following
\begin{align*}
	\mathcal Q_1:=\delta\int_{\TT}v  \Delta \left(v+w\right)^3,  \mathcal Q_2:=3\delta^{\frac{2}{3}} \int_{\TT}v  \Delta\left((v+w)^2 \varphi\right),
	\mathcal Q_3:=3\delta^{\frac{1}{3}} \int_{\TT}v  \Delta\left((v+w) \varphi^2\right).
\end{align*}
For sufficiently small $\epsilon>0$, applying Young's inequality, we deduce the existence of a constant $C_\epsilon>0$ independent of $\delta$ such that we estimate $\mathcal Q_i$, $i=1,2$, as follows. 
\begin{align}\label{est_q}
\notag	\mathcal{Q}_1&\leq  \delta \norm{\Delta v}_{L^2 } \norm{(v+w)^3}_{L^2 }\leq \epsilon\delta  \norm{\Delta v}_{L^2 }^2+C_\epsilon\delta \norm{(v+w)^3}_{L^{2} }^2\\
\notag	&\leq\epsilon\delta  \norm{\Delta v}_{L^2 }^2+C_\epsilon\delta \left(\norm{ v}_{H^{k} }^6+\norm{\ w}_{H^{k} }^6\right),\\
\notag	\mathcal{Q}_2&\leq  3\delta^{\frac{2}{3}}\norm{\varphi}_{L^\infty } \norm{\Delta v}_{L^2 } \norm{(v+w)^2}_{L^2 }\\
\notag&\leq \epsilon\delta  \norm{\Delta v}_{L^2 }^2+C_\epsilon\delta^{\frac{1}{3}} \norm{(v+w)^2}_{H^{k} }^2\norm{\varphi}_{L^\infty }^2\\
	&\leq\epsilon\delta  \norm{\Delta v}_{L^2 }^2+C_\epsilon\delta^{\frac{1}{3}} \left(\norm{v}_{H^{k} }^4+\norm{w}_{H^{k} }^4\right)\norm{\varphi}_{L^\infty }^2.
\end{align}
Again, performing integration by parts and then applying Young's inequality, we deduce
\begin{align}\label{QQ3}
\notag	
\mathcal Q_3&=3\delta^{\frac{1}{3}} \int_{\TT}v  \Delta\left((v+w) \varphi^2\right)=-3\delta^{\frac{1}{3}} \int_{\TT}\nabla v  \left(\varphi^2\nabla(v+w)+(v+w) \nabla\varphi^2\right)\\
\notag	&=-3\delta^{\frac{1}{3}} \int_{\TT}|\nabla v|^2 \varphi^2-3\delta^{\frac{1}{3}} \int_{\TT}\nabla v \varphi^2 \nabla w -3\delta^{\frac{1}{3}} \int_{\TT}\nabla v  \left((v+w) \nabla\varphi^2\right)\\
\notag	&\leq 3\delta^{\frac{1}{3}} \int_{\TT} v  \nabla\cdot(\varphi^2 \nabla w)+ \frac{3}{2}\delta^{\frac{1}{3}} \int_{\TT}|v|^2\Delta(\varphi^2)  +3\delta^{\frac{1}{3}} \int_{\TT}  v\nabla\cdot(w\nabla \varphi^2)\\
\notag	&\leq 3\delta^{\frac{1}{3}}\norm{v}_{L^2 }\norm{\varphi^2 \Delta w+2\varphi \nabla \varphi \nabla w}_{L^2 }+\frac{3}{2}\delta^{\frac{1}{3}}\norm{v}^2_{L^2 }\norm{\Delta\varphi^2}_{L^\infty }\\
\notag	&\qquad \qquad \qquad \quad+3\delta^{\frac{1}{3}}\norm{v}_{L^2 }\norm{w \Delta \varphi^2+2\varphi \nabla \varphi \nabla w}_{L^2 }\\
\notag	&\leq C\delta^{\frac{1}{3}} \norm{v}^2_{L^2 }\left(1+\norm{\varphi}^2_{H^{k+2} }\right)+C\delta^{\frac{1}{3}}\bigg(\norm{\varphi}^4_{H^{k} }\norm{w}^2_{H^2 }+\norm{w}^2_{H^{1} }\norm{\varphi}^4_{H^{k+1} }\\&\qquad\qquad  \qquad \quad +\norm{\varphi}^4_{H^{k+1} }
	+\norm{\varphi}^4_{H^{k+2} }\norm{w}^2_{L^2}
	\bigg).\end{align}
Using Young’s inequality in \eqref{est4}, inserting \eqref{est_q} and \eqref{QQ3} into it, and noting that $\delta\in(0,1)$, we deduce the existence of a constant $C>0$, independent of $\delta$ such that
\begin{align}\label{estl21}
	\notag	 \frac{d}{dt}  \|v\|_{L^2}^2 
	+\delta\norm{\Delta v}_{L^2}^2
&\leq	C \delta \|\Delta w\|_{L^2 }^2 + C\delta^{\frac{1}{3}} \|\Delta \varphi\|_{L^2 }^2+C\delta^{\frac{1}{3}} \|v\|_{L^2 }^2+C\delta^{\frac{1}{3}}\left(1+\norm{v}^6_{H^k}+\norm{v}^4_{H^k}\right)\\
	 &\leq C\delta^{\frac{1}{3}}\norm{v}^2_{L^2}+C\delta^{\frac{1}{3}}\left(\norm{v}^6_{H^k}+1\right).
	\end{align}
Similarly multiplying equation \eqref{eq_extension_delta2} by $(-\Delta)^k v$ and integrating over $\TT$, we have
\begin{align}\label{est5}
	\notag	\frac{1}{2} \frac{d}{dt}  &\|(-\Delta)^{\frac{k}{2}} v\|_{L^2 }^2 
	+ \delta\|(-\Delta)^{\frac{k+2}{2}} v\|_{L^2 }^2
	\leq  \delta \| (-\Delta)^{\frac{k+2}{2}} v\|_{L^2 }\|  (-\Delta)^{\frac{k+2}{2}}w\|_{L^2 }\\
	\notag	& +\delta^{\frac{2}{3}} \| (-\Delta)^{\frac{k+2}{2}} \varphi\|_{L^2 }\| (-\Delta)^{\frac{k+2}{2}} v\|_{L^2 }
	+ \delta \| (-\Delta)^{\frac{k+2}{2}} v\|_{L^2 } \| (-\Delta)^{\frac{k}{2}} v\|_{L^2 } \\
	\notag&+\delta \|  (-\Delta)^{\frac{k}{2}} v\|_{L^2 }\| (-\Delta)^{\frac{k+2}{2}} w\|_{L^2 }
	+\delta^{\frac{2}{3}} \|(-\Delta)^{\frac{k+2}{2}} \varphi\|_{L^2 }\| (-\Delta)^{\frac{k}{2}} v\|_{L^2 } \\
	&\qquad \qquad \qquad+\underbrace{\int_{\TT}(-\Delta)^{k} v \left(\delta \Delta \left(v+w + \delta^{-\frac{1}{3}}\varphi\right)^3-\Delta(\varphi^3)\right)}_{\mathcal P},
\end{align}
where the corresponding nonlinear terms $\mathcal P$ are detailed below.
\begin{align*}
	\mathcal P&:=	\delta\int_{\TT}(-\Delta)^{k}v  \Delta \left(v+w\right)^3+3\delta^{\frac{2}{3}} \int_{\TT} (-\Delta)^{k} v  \Delta\left((v+w)^2 \varphi\right)+{3\delta^{\frac{1}{3}} \int_{\TT} (-\Delta)^{k} v \Delta  \left((v+w) \varphi^2\right)}\\
	&=:\mathcal P_1+\mathcal P_2+\mathcal P_3.
\end{align*}
In a similar manner as for $\mathcal Q$, we estimate $\mathcal P_i,$ $i=1,2,$
\begin{align}\label{est_p1}
	\notag	\mathcal P_1&\leq \delta \| (-\Delta)^{\frac{k+2}{2}} v\|_{L^2 } \| (-\Delta)^{\frac{k}{2}} (v+w)^3\|_{L^2 }\\
	\notag	&\leq \delta\epsilon \|(-\Delta)^{\frac{k+2}{2}} v\|_{L^2}^2+C_\epsilon \delta\| v\|_{H^k }^6+C_\epsilon\delta\| w\|_{H^k }^6\\
	\notag	\mathcal P_2&\leq 3\delta^{\frac{2}{3}} \| (-\Delta)^{\frac{k+2}{2}} v\|_{L^2 } \| (-\Delta)^{\frac{k}{2}} \left((v+w)^2\varphi\right)\|_{L^2 }\\
		&\leq \delta\epsilon \|(-\Delta)^{\frac{k+2}{2}} v\|_{L^2}^2+C_\epsilon \delta^{\frac{1}{3}}\| v\|_{H^k }^4\norm{\varphi}_{H^k }^2+C_\epsilon \delta^{\frac{1}{3}}\| w\|_{H^k }^4\norm{\varphi}_{H^k }^2,
	\end{align}
	where $C_\epsilon>0$ is independent of $\delta$. Observe that in the term $\mathcal P_3$ we have a coefficient with order $\delta^{\frac{1}{3}}$. Since  $\delta<1$ is small enough, this term can not be estimated using Young's inequality to absorb it into the term $ \delta\|(-\Delta)^{\frac{k+2}{2}} v\|_{L^2 }^2$ appearing on the left of the inequality \eqref{est5}. Indeed doing so, would produce a factor of the form $\delta^{-r}$ for some $r>0$ at the right which becomes large and prevent us to take the limit $\delta\to 0^+.$ Therefore, we need to apply integration by parts successively to reduce the differential order on $v$ transferring derivatives onto the function 
 $\varphi$ and then estimate the resulting terms with $\norm{v}_{H^k }.$ The details of this argument are provided in Lemma \ref{estimateofp3} of Appendix \ref{estp3} which leads to the following estimate.
\begin{align}\label{p3est}
	\mathcal P_3&\leq \delta^{\frac{1}{3}} C\norm{\varphi}^2_{H^{3k+2} }\left(\|v\|_{H^k }^2+\|w\|_{H^k }^2\right).
\end{align}
Applying Young’s inequality to \eqref{est5}, combining it with \eqref{est_p1}--\eqref{p3est}, we obtain a constant $C>0$, independent of $\delta \in (0,1)$, such that
\begin{align}\label{est_pq}
	\notag  &\frac{d}{dt} \left(  \|(-\Delta)^{\frac{k}{2}} v\|_{L^2 }^2 \right)
	+ \delta\|(-\Delta)^{\frac{k+2}{2}} v\|_{L^2 }^2\\
		&
	\leq  C \bigg(\delta \|  (-\Delta)^{\frac{k+2}{2}}w\|^2_{L^2 }
	 + \delta^{\frac{1}{3}} \|  (-\Delta)^{\frac{k+2}{2}}\varphi\|^2_{L^2 }+ \delta\|(-\Delta)^{\frac{k}{2}} v\|_{L^2 }^2\bigg)+ C\delta^{\frac{1}{3}}(1+\norm{v}^6_{H^k}).
\end{align}
Adding \eqref{estl21} and \eqref{est_pq}, using the fact that $\delta\in (0,1)$, we have
\begin{align*}
	 \frac{d}{dt} \left( \|v\|_{L^2 }^2 + \|(-\Delta)^{\frac{k}{2}} v\|_{L^2 }^2 \right)
	&+ \delta\|(-\Delta)^{\frac{k+2}{2}} v\|_{L^2 }^2+\delta\norm{\Delta v}_{L^2 }^2\\
	&
	\leq C\delta^{\frac{1}{3}}\left(\norm{v}^2_{L^2}+\norm{v}^2_{H^k}\right)+C\delta^{\frac{1}{3}}\left(1+ \norm{v}^6_{H^k }\right).
\end{align*}
where the constant $C>0$ absorbs all the fixed quantities as $\|\varphi\|$ and $\|w\|$. 
 The above inequality further simplifies to
\begin{align*}
	 \frac{d}{dt}\|v(t)\|^2_{H^k}& 
	 \leq C \delta^{\frac{1}{3}}\left(1+\|v(t)\|^6_{H^k}\right).
\end{align*}
By Gr\"onwall's inequality, we have
\begin{align}\label{grn}
	\|v(t)\|^2_{H^k{ }}\leq C \delta^{\frac{1}{3}}t+C \delta^{\frac{1}{3}}\int_{0}^{t}\|v(s)\|^6_{H^k{ }} \, ds, \,\, \text{ for }t<  \min\{2, \delta^{-1}\mathcal T^\delta\}.
	\end{align}
Next, we will show the existence of $\delta_0\in (0,1)$ small enough such that, for all $\delta\in (0,\delta_0),$  $\delta^{-1}\mathcal T^\delta> 1.$ 
Let us denote $\gamma^{\delta}:=\sup\{t<\delta^{-1}\mathcal T^{\delta}: \norm{v(t)}_{H^k }<1\}.$ Inequality \eqref{grn} ensures that $\gamma^\delta>0.$ If $\gamma^\delta=+\infty,$ then $\delta^{-1}\mathcal T^\delta> 1$ follows trivially. Let us consider the case $\gamma^\delta<+\infty.$ Note that, to prove $\delta^{-1}\mathcal T^{\delta}>1,$ it is enough to show that, there exists $\delta_0>0$ small enough such that, for all $\delta\in (0,\delta_0),$  $\gamma^\delta> 1.$ We prove it by contradiction. If possible, let us assume that, for every $\delta_0>0$ there exists $\delta\in (0,\delta_0)$ such that $\gamma^{\delta}\leq 1.$
Thanks to \eqref{grn}, we have
\begin{align*}
	1=\|v(\gamma^\delta)\|^2_{H^k{ }}\leq C \delta^{\frac{1}{3}} \gamma^\delta+C \delta^{\frac{1}{3}}\int_{0}^{\gamma^\delta}\|v(s)\|^6_{H^k{ }} \, ds.
\end{align*}
Choose $\delta_0$ sufficiently small such that for all $\delta \in (0,\delta_0)$ we have $C \delta^{\frac{1}{3}} <\frac{1}{2}.$
This gives that \begin{align*}C \delta^{\frac{1}{3}} \gamma^\delta+C \delta^{\frac{1}{3}}\int_{0}^{\gamma^\delta}\|v(s)\|^6_{H^k{ }} \, ds<1,
\end{align*}
which contradicts the fact $\|v(\gamma^\delta)\|_{H^k{ }}=1$ above.
Thus we can conclude that, there exists a $\delta_0>0$ such that when $0<\delta<\delta_0,$ $\gamma^\delta> 1$ and thus $\delta^{-1}\mathcal T^{\delta}> 1.$

\noindent
Therefore, solution $v$ of \eqref{eq_extension_delta2} is well-defined in $[0,1]$ and finally from \eqref{grn} we have 
\begin{align*}
		\|v(1)\|^2_{H^k}\leq C_1 \delta^{\frac{1}{3}} \longrightarrow 0 \text{ as } \delta \to 0^+.
\end{align*}
 Hence, we have proved that solution $u(t)$ of \eqref{eq_extension_delta} is well-defined in $[0,\delta]$ and the following limit holds $$ u(\delta)= \mathcal R_{\delta}(u_0, \delta^{-\frac{1}{3}}\varphi, {\delta}^{-1}\eta)\to u_0+(\eta+\Delta(\varphi^3)) \text{ in } H^k(\TT) \text{ as } \delta \to 0^+.$$
\end{proof}

\subsection{Saturation property}\label{sec_sat}
This section is devoted to the so-called saturation property, which serves as a key ingredient together with \Cref{prop_asympt} for proving Theorem \ref{small_time}.
Let us recall the finite dimensional subspace $\mathscr{H}_0$ of $H^{k}(\mathbb T^d)$ defined in \eqref{hnot}. We now define a non-decreasing sequence of finite dimensional subspaces as follows
\begin{equation}\label{construction_h}
	\mathscr H:=\mathscr H_0, \qquad
	 \mathscr{H}_j:=\mathcal F(\mathscr H_{j-1}), \ j\in\mathbb N^*, \qquad \mathscr{H}_\infty:=\cup_{j=1}^{\infty}\mathscr{H}_{j-1}
\end{equation}
where
\begin{equation}\label{ansatz_PI}
	\mathcal F(G):=\spn\left\{f_0+\sum_{i=1}^{m}\Delta(f_i^3): f_0,f_i \in G, \, \textnormal{for some } m\in\mathbb N^*\right\}
\end{equation}
for some set $G$. 

\begin{defi}\label{def_sat}
	We say that $\mathscr H$ is saturating if $\mathscr H_{\infty}$ is dense in $H^{k}(\mathbb T^d)$.
\end{defi}
For any subset $I\subset \mathbb Z^d$, let us denote the set of trigonometric functions 
\begin{equation}\label{notation_A_I}
	A_I:=\{\sin (x \cdot p),\cos (x \cdot p) : p \in I\}.
\end{equation}
For $N\in\mathbb N$, we define the set of indices
\begin{equation*}
	I_N=\left\{p=(p_1,p_2,\ldots,p_d)\in\mathbb Z^d: \sum_{i=1}^{d}|p_i|\leq N+1\right\}.
\end{equation*}
Observe that
\begin{equation*}
	\mathscr{H}_0=\spn A_{I_0}.
\end{equation*}
The goal of this section is to prove the following result. 
\begin{prop}\label{thm_sat}
	The space $\mathscr{H}_0$ is saturating.
\end{prop}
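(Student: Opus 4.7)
\bigskip

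\noindent\textbf{Proof plan for Proposition \ref{thm_sat}.}

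The plan is to show, by induction on $j$, that $\mathscr{H}_j$ contains $\cos(x\cdot n)$ and $\sin(x\cdot n)$ for an ever-growing set of frequencies $n\in\mathbb{Z}^d$, and to check that the union of these sets exhausts $\mathbb{Z}^d\setminus\{0\}$. Since the trigonometric polynomials are dense in $H^k(\mathbb{T}^d)$ and the constant $1$ already belongs to $\mathscr{H}_0$, this will force $\mathscr{H}_\infty$ to be dense in $H^k(\mathbb{T}^d)$.

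The key algebraic step is a polarization identity. Since $\mathscr{H}_{j-1}$ is a vector space and $\mathcal{F}(\mathscr{H}_{j-1})$ is a span, $\mathcal{F}(\mathscr{H}_{j-1})$ contains every $\Delta(f^3)$ with $f\in\mathscr{H}_{j-1}$. The cubic polarization
\begin{equation*}
6\,f_1 f_2 f_3 \;=\; (f_1+f_2+f_3)^3 - (f_1+f_2)^3 - (f_2+f_3)^3 - (f_1+f_3)^3 + f_1^3 + f_2^3 + f_3^3
\end{equation*}
then yields $\Delta(f_1 f_2 f_3)\in \mathcal{F}(\mathscr{H}_{j-1})=\mathscr{H}_j$ for every $f_1,f_2,f_3\in \mathscr{H}_{j-1}$. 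Extending by $\mathbb{C}$-linearity, the same inclusion holds in the complexification for $f_1,f_2,f_3\in \mathbb{C}\otimes\mathscr{H}_{j-1}$. Applying this with $f_\ell=e^{ix\cdot n_\ell}$ gives
\begin{equation*}
\Delta\bigl(e^{ix\cdot(n_1+n_2+n_3)}\bigr) \;=\; -|n_1+n_2+n_3|^2\, e^{ix\cdot(n_1+n_2+n_3)} \;\in\; \mathbb{C}\otimes \mathscr{H}_j,
\end{equation*}
so whenever $m:=n_1+n_2+n_3\neq 0$, the exponential $e^{ix\cdot m}$ lies in $\mathbb{C}\otimes \mathscr{H}_j$, hence $\cos(x\cdot m),\sin(x\cdot m)\in \mathscr{H}_j$.

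With this tool in hand I would set $\mathcal{N}_j:=\{n\in\mathbb{Z}^d\setminus\{0\}:\cos(x\cdot n),\sin(x\cdot n)\in \mathscr{H}_j\}$ and note the starting point $\mathcal{N}_0\supset\pm\mathcal{K}$ together with the induction rule
\begin{equation*}
\mathcal{N}_{j+1}\;\supset\;\bigl\{\,n_1+n_2+n_3\neq 0\;:\; n_1,n_2,n_3\in \mathcal{N}_j\cup\{0\}\,\bigr\},
\end{equation*}
where $0$ is admissible because $1=e^{ix\cdot 0}\in\mathscr{H}_0$. I would then prove $\bigcup_{j\geq 0}\mathcal{N}_j=\mathbb{Z}^d\setminus\{0\}$ by strong induction on the $\ell^1$-norm $|n|_1$: for $|n|_1=1$ we have $n\in\pm\mathcal{K}=\mathcal{N}_0$; for $|n|_1\geq 2$, pick a coordinate $i$ with $n_i\neq 0$, set $n_1=\operatorname{sign}(n_i)\,e_i\in\mathcal{N}_0$ and $n_2=n-n_1\in\mathbb{Z}^d\setminus\{0\}$ with $|n_2|_1=|n|_1-1$, apply the inductive hypothesis to $n_2$, and use the rule above with the triple $(n_1,n_2,0)$.

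I expect the main conceptual point to be getting the polarization-plus-complexification step right: the nonlinearity in $\mathcal{F}$ is real and cubic, and it is not immediately obvious that it lets one isolate a single new Fourier mode. The $\Delta$ in front is what makes this clean, because its Fourier symbol $-|m|^2$ never vanishes at $m\neq 0$, so the mode produced by the triple product is never killed. Once that observation is in place, the rest of the argument is essentially a bookkeeping induction on $|n|_1$ and the density of trigonometric polynomials in $H^k(\mathbb{T}^d)$ concludes the proof.
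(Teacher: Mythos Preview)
Your argument is correct and follows the same high-level strategy as the paper: a cubic polarization identity gives $\Delta(f_1f_2f_3)\in\mathscr{H}_j$ for all $f_1,f_2,f_3\in\mathscr{H}_{j-1}$, and one then generates every nonzero frequency by induction on its $\ell^1$-norm. The paper uses a different four-term polarization $pqr=\sum_{i=1}^4 f_i(p,q,r)^3$ with asymmetric coefficients and carries out the induction entirely in real trigonometric coordinates, treating $\sin(x\cdot p)$ and $\cos(x\cdot p)$ separately via angle-addition formulas and a case-by-case analysis (single angle versus sum of angles). Your complexification to $e^{ix\cdot n}$, combined with the trilinearity of $(f_1,f_2,f_3)\mapsto\Delta(f_1f_2f_3)$, is a genuine streamlining: it collapses all of that trigonometric casework into the single line $\Delta(e^{ix\cdot n_1}e^{ix\cdot n_2}e^{ix\cdot n_3})=-|n_1+n_2+n_3|^2\,e^{ix\cdot(n_1+n_2+n_3)}$, while the key observation---that the symbol of $\Delta$ never vanishes on nonzero modes, so the newly produced frequency survives---is identical in both proofs.
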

The proof relies on a characterization of certain elements in the subspace $\mathscr{H}_{j}$ defined in \eqref{construction_h}. This characterization is stated in \Cref{prop:charac}. In what follows, we will make use of the following auxiliary lemmas, whose proofs can be obtained by direct computations.

\begin{lem}\label{lem_aux}
	Let $p,q,r$ be real-valued scalar functions. Then the following identity holds
	\begin{equation*}
		\begin{split}
			pqr&=\left[\left(\frac{p}{2}+\frac{q}{3}+\frac{r}{4}\right)^3-\left(\frac{p}{2}+\frac{q}{3}-\frac{r}{4}\right)^3-\left(\frac{p}{2}-\frac{q}{3}+\frac{r}{4}\right)^3+\left(\frac{p}{2}-\frac{q}{3}-\frac{r}{4}\right)^3\right] \\
			&=:f_1(p,q,r)^3+f_2(p,q,r)^3+f_3(p,q,r)^3+f_4(p,q,r)^3 \\
			&=:Q(p,q,r).
		\end{split}
	\end{equation*}
\end{lem}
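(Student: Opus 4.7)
The plan is to reduce the statement to an algebraic identity in three real variables (the equality is pointwise in $x$, with no differential or functional-analytic content), and then verify it by a short direct computation that exploits a well-chosen regrouping of the four cubes. The peculiar denominators $2,3,4$ in the statement are chosen precisely so that the coefficient of $pqr$ on the right-hand side comes out to exactly one, which is the only nontrivial thing to check.

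The first step I would take is the substitution $a:=p/2$, $b:=q/3$, $c:=r/4$, which turns the target identity into
\begin{equation*}
(a+b+c)^3 - (a+b-c)^3 - (a-b+c)^3 + (a-b-c)^3 \;=\; 24\,abc,
\end{equation*}
since $pqr = 24\cdot(p/2)(q/3)(r/4) = 24\,abc$. This is now a symmetric statement about three variables with no awkward fractions.

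Next I would apply the elementary identity $(u+v)^3 - (u-v)^3 = 2v(3u^2+v^2)$ to the two natural pairs of cubes, choosing first $(u,v)=(a+b,c)$ and then $(u,v)=(a-b,c)$. This gives
\begin{equation*}
2c\bigl(3(a+b)^2 + c^2\bigr) \;-\; 2c\bigl(3(a-b)^2 + c^2\bigr) \;=\; 6c\bigl((a+b)^2-(a-b)^2\bigr) \;=\; 6c\cdot 4ab \;=\; 24\,abc,
\end{equation*}
where the $c^3$ contributions cancel between the two pairs and the $(a+b)^2-(a-b)^2$ factor simplifies to $4ab$ by a second application of the difference-of-squares formula. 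Undoing the substitution recovers $pqr$ on the right. The functions $f_1,\dots,f_4$ and the polynomial $Q$ in the statement are then simply the four signed linear forms $\pm(p/2)\pm(q/3)\pm(r/4)$ read off from the four cubed terms on the left.

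There is no real obstacle here; the only thing worth flagging is that a brute-force expansion of the four cubes would produce a large number of monomials in $p,q,r$ that must then be cancelled by hand. Pairing the cubes \emph{by the sign of $c$ first}, and invoking the cube identity once per pair, reduces the entire verification to the two lines displayed above.
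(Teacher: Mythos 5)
Your proof is correct. The paper gives no explicit argument for this lemma — it simply asserts that it (along with Lemmas \ref{lem_anti} and \ref{lem_iden_Q}) ``can be obtained by direct computations'' — so your substitution $a=p/2$, $b=q/3$, $c=r/4$ followed by pairing the cubes via $(u+v)^3-(u-v)^3=2v(3u^2+v^2)$ is exactly the kind of direct computation the authors have in mind, carried out cleanly.
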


\begin{lem}\label{lem_anti}
	For any real-valued scalar functions $p,q,r$, we have $Q(p,q,-r)=-Q(p,q,r)$.
	
\end{lem}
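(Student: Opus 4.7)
The plan is to verify \Cref{lem_anti} by direct computation from the explicit formula for $Q$ given in \Cref{lem_aux}. Introduce the shorthands
\begin{align*}
A_1 &= \tfrac{p}{2}+\tfrac{q}{3}+\tfrac{r}{4}, & A_2 &= \tfrac{p}{2}+\tfrac{q}{3}-\tfrac{r}{4},\\
A_3 &= \tfrac{p}{2}-\tfrac{q}{3}+\tfrac{r}{4}, & A_4 &= \tfrac{p}{2}-\tfrac{q}{3}-\tfrac{r}{4},
\end{align*}
so that \Cref{lem_aux} rewrites as $Q(p,q,r)=A_1^3-A_2^3-A_3^3+A_4^3$. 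The key observation is that the substitution $r\mapsto -r$ acts on the four linear forms by a double transposition: it swaps $A_1\leftrightarrow A_2$ and $A_3\leftrightarrow A_4$, while leaving $p$ and $q$ untouched. Consequently,
\begin{equation*}
Q(p,q,-r) \;=\; A_2^3 - A_1^3 - A_4^3 + A_3^3 \;=\; -\bigl(A_1^3 - A_2^3 - A_3^3 + A_4^3\bigr) \;=\; -Q(p,q,r),
\end{equation*}
which is exactly the claimed identity.

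No genuine obstacle arises here: the statement is a structural bookkeeping fact about four cubic terms, depending only on the $(+,-,-,+)$ sign pattern in the definition of $Q$ and the pairwise symmetry of the four arguments under $r\mapsto -r$. An equivalent route would be to expand each $A_i^3$ via the binomial formula, collect monomials by their degree in $r$, and observe that only the odd-in-$r$ contributions survive the antisymmetrization (the even-in-$r$ contributions cancel in pairs by the $(+,-,-,+)$ pattern); this is slightly longer but makes transparent that the $1/2,1/3,1/4$ coefficients play no role in the antisymmetry, only in the identity $Q(p,q,r)=pqr$ of \Cref{lem_aux}.
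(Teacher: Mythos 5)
Your argument is correct: the substitution $r\mapsto-r$ does swap the four linear forms in pairs ($A_1\leftrightarrow A_2$, $A_3\leftrightarrow A_4$), and the $(+,-,-,+)$ sign pattern then flips overall sign, which is exactly the claimed antisymmetry. The paper gives no written proof for this lemma (it only remarks that the auxiliary lemmas follow by direct computation), and your swap argument is precisely the kind of direct computation intended. One could shorten it even further by invoking \Cref{lem_aux} itself, which already established $Q(p,q,r)=pqr$, so that $Q(p,q,-r)=pq(-r)=-pqr=-Q(p,q,r)$; your version has the minor advantage of showing the antisymmetry is visible structurally in the cubic formula without relying on the identity $Q=pqr$.
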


Hereinafter, for any nonnegative real numbers $a<b$, we define  the set $\inter{a,b}:=[a,b]\cap \mathbb N$. 

\begin{lem}\label{lem_iden_Q}
	For any subset $I\subset \mathbb Z^d$, if $p,q,r\in A_I$, then $f_i(p,q,r)\in\spn A_{I}$ for $i\in\inter{1,4}$. Additionally, we have 
	\begin{equation*}
		\Delta Q(p,q,r)=\sum_{i=1}^{4}\Delta\left(f_i(p,q,r)^3\right).
	\end{equation*}
\end{lem}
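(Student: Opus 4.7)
The plan is to verify both assertions by direct inspection, since the hard algebraic work has already been carried out in Lemma \ref{lem_aux}; what remains is essentially a linearity observation applied in two different ways.

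For the first assertion, I would point out that each $f_i(p,q,r)$, as introduced in Lemma \ref{lem_aux}, is a \emph{fixed scalar linear combination} of the three inputs $p$, $q$, $r$, with numerical coefficients drawn from $\{\pm 1/2,\pm 1/3,\pm 1/4\}$ and independent of the particular functions plugged in. Whenever $p,q,r\in A_I$, they lie a fortiori in the real vector space $\spn A_I$, which is closed under scalar multiplication and finite sums. Consequently, $f_i(p,q,r)\in\spn A_I$ for every $i\in\inter{1,4}$.

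For the second assertion, I would simply apply the Laplacian $\Delta$ to the defining identity
\begin{equation*}
Q(p,q,r)=\sum_{i=1}^{4} f_i(p,q,r)^3
\end{equation*}
established in Lemma \ref{lem_aux}, and invoke the linearity of $\Delta$ on finite sums to conclude
\begin{equation*}
\Delta Q(p,q,r)=\sum_{i=1}^{4}\Delta\bigl(f_i(p,q,r)^3\bigr).
\end{equation*}

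There is no genuine analytical obstacle in this lemma: it is essentially a bookkeeping statement extracted from Lemma \ref{lem_aux}. The only point that deserves verification is the observation underlying the first assertion, namely that the coefficients defining $f_1,\ldots,f_4$ are numerical scalars independent of $p,q,r$, so that no hidden nonlinear dependence on the trigonometric structure of $A_I$ appears. Once this is registered, both claims follow immediately and no inductive step or auxiliary computation is required.
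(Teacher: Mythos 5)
Your proposal is correct, and since the paper explicitly defers the proofs of Lemmas \ref{lem_aux}, \ref{lem_anti}, and \ref{lem_iden_Q} to ``direct computations,'' you are filling in exactly the reasoning the authors left implicit. The first assertion is indeed just the observation that each $f_i$ is a linear combination of $p,q,r$ with fixed numerical coefficients (more precisely, the coefficient of $p$ is $\pm\tfrac12$, of $q$ is $\pm\tfrac13$, of $r$ is $\pm\tfrac14$), together with the fact that $\spn A_I$ is a vector space; the second assertion is linearity of $\Delta$ applied to the identity $Q=\sum_{i=1}^4 f_i^3$ from Lemma \ref{lem_aux}. No gap, and no divergence from the paper's intended route.
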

We are in position to prove the following result.
\begin{prop}\label{prop:charac}
	For $N\in\mathbb N^*$, the following holds
	\begin{equation*}
		\spn A_{I_N}\subset \mathscr H_{N}.
	\end{equation*}
\end{prop}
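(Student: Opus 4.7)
The plan is to argue by induction on $N$. The base case $N=0$ is immediate from the definition $\mathscr H_0=\spn A_{I_0}$. For the inductive step, assume $\spn A_{I_N}\subset\mathscr H_N$ and observe first that $\mathscr H_N\subset\mathscr H_{N+1}$, since for any $f_0\in\mathscr H_N$ one has $f_0=f_0+\Delta(0^3)\in\mathcal F(\mathscr H_N)=\mathscr H_{N+1}$. It therefore suffices to prove $\sin(x\cdot q),\cos(x\cdot q)\in\mathscr H_{N+1}$ for every $q\in I_{N+1}$ with $\sum_i|q_i|=N+2$. The engine of the argument is \Cref{lem_iden_Q}: for any $p_1,p_2,p_3\in A_{I_N}$ it gives $p_1p_2p_3=\sum_{i=1}^4 f_i(p_1,p_2,p_3)^3$ with $f_i\in\spn A_{I_N}\subset\mathscr H_N$, so
$$\Delta(p_1p_2p_3)=\sum_{i=1}^{4}\Delta(f_i^3)\in\mathcal F(\mathscr H_N)=\mathscr H_{N+1}.$$
In particular, Laplacians of triple products of elements of $A_{I_N}$ always lie in $\mathscr H_{N+1}$.

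Given $q$ with $\sum_i|q_i|=N+2$, I fix a decomposition $q=A+B+C$ with $A,B,C\in I_N$ and consider the four ``odd-sine'' triple products
\begin{equation*}
\pi_1=\sin(x\cdot A)\cos(x\cdot B)\cos(x\cdot C),\qquad \pi_2=\cos(x\cdot A)\sin(x\cdot B)\cos(x\cdot C),
\end{equation*}
\begin{equation*}
\pi_3=\cos(x\cdot A)\cos(x\cdot B)\sin(x\cdot C),\qquad \pi_4=\sin(x\cdot A)\sin(x\cdot B)\sin(x\cdot C).
\end{equation*}
Standard product-to-sum formulas yield $\pi_i=\tfrac14\sum_{j=1}^4\varepsilon_{ij}\sin(x\cdot\omega_j)$ with $(\omega_1,\omega_2,\omega_3,\omega_4)=(q,\,q-2C,\,q-2B,\,q-2B-2C)$ and $(\varepsilon_{ij})$ an explicit sign matrix whose rows are $(+,+,+,+)$, $(+,+,-,-)$, $(+,-,+,-)$, $(-,+,+,-)$; a direct computation gives $\det(\varepsilon_{ij})=\pm 16$. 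Applying $\Delta$ and using the previous paragraph, one obtains the four relations
\begin{equation*}
-\tfrac14\sum_{j=1}^{4}\varepsilon_{ij}\,|\omega_j|^2\,\sin(x\cdot\omega_j)=\Delta\pi_i\in\mathscr H_{N+1},\qquad i=1,\dots,4.
\end{equation*}
Whenever the $\omega_j$ are distinct and nonzero, the coefficient matrix $(\varepsilon_{ij}\,|\omega_j|^2)$ is invertible, so each $\sin(x\cdot\omega_j)$---in particular $\sin(x\cdot q)$---belongs to $\mathscr H_{N+1}$. The parallel argument with the ``even-sine'' products $\cos\cos\cos,\sin\sin\cos,\sin\cos\sin,\cos\sin\sin$ produces $\cos(x\cdot q)$.

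The decomposition $q=A+B+C$ is chosen case by case. If some coordinate of $q$ satisfies $|q_i|\ge 2$, I take $B=C=\sigma_ie_i$ (with $\sigma_i=\operatorname{sign}(q_i)$) and $A=q-2\sigma_ie_i$, so that $\sum_k|A_k|=N$; here $\omega_2=\omega_3=A$ coincide, but the resulting effective $3\times 3$ system in $\sin(x\cdot q),\sin(x\cdot A),\sin(x\cdot\omega_4)$ stays nonsingular. Otherwise $|q_i|\le 1$ for every $i$ and $q$ has $N+2\ge 2$ unit-valued nonzero coordinates; picking two distinct such indices $i\ne j$ and setting $B=\sigma_ie_i$, $C=\sigma_je_j$, $A=q-B-C$ gives $\sum_k|A_k|=N$ with four distinct $\omega_j$, all nonzero as soon as $N\ge 1$, so the full $4\times 4$ system is invertible and actually solves simultaneously for the sines at several frequencies of $I_{N+1}\setminus I_N$. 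The remaining degeneracy is $A=0$, which occurs only for $N=0$ (either $q=2\sigma_ie_i$ or $q=\sigma_ie_i+\sigma_je_j$); there two of the $\pi_i$'s vanish, but the symmetry $\sin(x\cdot(-\omega))=-\sin(x\cdot\omega)$ together with the identifications $\omega_4=-\omega_1$ and $\omega_3=-\omega_2$ reduces the surviving relations to a nonsingular system that still isolates $\sin(x\cdot q)$.

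The main obstacle is precisely this case analysis: checking that in every configuration of $q$---including the coincidence $\omega_2=\omega_3$ and the $A=0$ degeneracies---the associated reduced linear system is nonsingular, so that $\sin(x\cdot q)$ and $\cos(x\cdot q)$ are indeed isolated in $\mathscr H_{N+1}$. Once this bookkeeping is carried out, the induction closes and yields $\spn A_{I_{N+1}}\subset\mathscr H_{N+1}$, completing the proof.
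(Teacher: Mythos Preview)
Your approach is conceptually sound but unnecessarily intricate, and the proof as written is incomplete: you explicitly defer the ``main obstacle''---the case analysis---to unperformed bookkeeping. Moreover, you understate its extent. Distinctness and nonvanishing of the $\omega_j$ are not enough: since $\sin$ is odd, any relation $\omega_j=-\omega_k$ also collapses the system, and this occurs in cases you do not flag (for instance $q=3\sigma_i e_i$ in your first decomposition yields $\omega_4=-A$, and $q=4\sigma_i e_i$ gives $\omega_4=0$). Each such degeneracy does in fact resolve favourably, but none of this is verified in the proposal.

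The paper avoids the entire linear-algebra layer by a single observation you missed: the constant function $1$ belongs to $A_{I_N}$ (it is $\cos(x\cdot 0)$) and hence to $\mathscr H_N$. Taking $1$ as the third factor in $Q$, one has for $l\in I_N$ and $m\in I_0$
\[
\sin(x\cdot(l+m))=\sin(x\cdot l)\cos(x\cdot m)+\cos(x\cdot l)\sin(x\cdot m)=Q(\sin l,\cos m,1)+Q(\cos l,\sin m,1),
\]
and applying $-\Delta$ immediately gives $|l+m|^2\sin(x\cdot(l+m))\in\mathscr H_{N+1}$, with $|l+m|^2\neq 0$ since $\sum_i|l_i+m_i|=N+2$. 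The same trick with the cosine addition formula yields $\cos(x\cdot(l+m))\in\mathscr H_{N+1}$. Thus every new frequency is isolated directly, without inverting any system and with no degenerate cases to track. Your three-frequency decomposition works in principle, but it trades a one-line identity for a nontrivial verification; the simplification you are looking for is to set one of $A,B,C$ to zero.
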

\begin{proof}
	We proceed by induction. 
	
	\medskip
	
	\textbf{-- Case $N=1$}. From the notation introduced in \eqref{notation_A_I}, note that
	\begin{equation*}
		A_{I_1}=\left\{\sin(x\cdot p),\cos(x\cdot p) : \sum_{i=1}^{d}|p_i|\leq 2\right\}.
	\end{equation*}
	By construction (see \eqref{construction_h} and \eqref{ansatz_PI}), we already have $\spn A_{I_0}\subset \mathscr H_1$. Therefore, to prove that $\spn{A_{I_1}}\subset \mathscr H_{1}$, it
	suffices to focus on the functions whose indices satisfy $\sum_{i=1}^{d}|p_i|\leq N+1= 2$. For these elements, we will show that each of them can be generated (up to a constant) by means of the ansatz (cf. \eqref{ansatz_PI})
	\begin{equation}\label{ansatz_0}
		f_0+\sum_{i=1}^{m}\Delta(f_i^3) \textnormal{ with } f_0,f_i \in \mathscr H_0 \textnormal{ and some } m\in\mathbb N^*.
	\end{equation}
	Using the symmetry properties of the sine and cosine functions, it suffices to consider the cases with $p = e_i\pm e_j$, where $e_i, e_j$ are vectors in the canonical basis of $\mathbb{R}^d$, since all other functions can be generated from these by symmetry.
	
	\begin{itemize}
		
		\item \textbf{The single angles}. Let us generate $\sin 2\theta$ for $\theta=x_i$, $i\in\inter{1,d}$. Using Lemma \ref{lem_aux}, we have that
		\begin{equation*}
			\sin \theta\cos \theta=Q\left(\sin \theta,\cos \theta,1\right).
		\end{equation*}
		Applying the operator $-\Delta$ on both sides of the above expression, we have
		\begin{align*}
			4\sin 2\theta=&-\Delta Q(\sin \theta,\cos \theta,1).
		\end{align*}
		By Lemma \ref{lem_iden_Q} and recalling the ansatz \eqref{ansatz_0}, we deduce that $\sin 2\theta$ for $\theta=x_i$ belongs to $\mathscr H_1$. To generate $\cos 2\theta$ for $\theta=x_i$, $i\in\inter{1,d}$, we argue as follows. We have that 
		\begin{equation*}
			\cos^2 \theta=Q(\cos \theta,\cos \theta,1).
		\end{equation*}
		Using that $\cos 2\theta=2\cos^2 \theta-1$ and the above expression, we have that
		\begin{equation*}
			\frac{1}{2}\cos 2\theta=Q(\cos \theta,\sin \theta,1)+\frac{1}{2}.
		\end{equation*}
		Therefore, 
		\begin{align*}
			2\cos 2\theta=-\Delta(Q(\cos \theta,\cos \theta,1)),
		\end{align*}
		and, by Lemma \ref{lem_iden_Q}, $\cos(2\theta)$ for $\theta=x_i$ belongs to the vector space $\mathscr H_1$.

		\item \textbf{The sum of angles}. Using Lemma \ref{lem_aux}, we can write
		\begin{equation*}
	\sin x_i\cos x_j=Q(\sin x_i,\cos x_j,1) \quad\text{and}\quad \cos x_i \sin x_j = Q(\cos x_i,\sin x_j,1),
		\end{equation*}
	which lead to
		\begin{equation*}
			\sin(x\cdot p)=	\sin(x\cdot (e_i\pm e_j))=	\sin(x_i\pm x_j)=Q(\sin x_i,\cos x_j,1) + Q(\cos x_i,\sin x_j,\pm 1),
		\end{equation*}
		where we have used Lemma \ref{lem_anti}. Applying the operator $-\Delta$ in both sides of the above expression we have
		\begin{equation*}
			2\sin(x_i\pm x_j)= -\Delta(Q(\sin x_i,\cos x_j,1))-\Delta(Q(\cos x_i,\sin x_j,\pm 1))
		\end{equation*}
		so $\sin(x_i\pm x_j)\in \mathscr H_1$ by recalling Lemma \ref{lem_iden_Q} and \eqref{ansatz_0}. On the other hand, we have that
		\begin{equation*}
			\cos x_i\cos x_j=Q(\cos x_i,\cos x_j,1) \quad\text{and}\quad \sin x_i \sin x_j = Q(\sin x_i,\sin x_j,1),
		\end{equation*}
		which in turn yield
		\begin{equation*}
			\cos(x\cdot p)=\cos(x_i\pm x_j)=Q(\cos x_i,\cos x_j,1) + Q(\sin x_i,\cos x_j,\mp 1).
		\end{equation*}
	Similarly applying $-\Delta$ on both sides of the above expression, we have that 
		\begin{equation*}
			2\cos(x_i\pm x_j)=-\Delta Q(\cos x_i,\cos x_j,1) - \Delta Q(\sin x_i,\cos x_j,\mp 1),
		\end{equation*}
		and thus $\cos( x\cdot p)\in \mathscr H_1$. 
	\end{itemize}
	With this, we have proved that $\spn A_{I_1} \subset \mathscr H_1$.
	
	\medskip
	
	\textbf{-- The inductive step}. Assume that $\spn{A}_{I_N} \subset \mathscr H_{N}$. Then, by the construction described in \eqref{construction_h} and \eqref{ansatz_PI},  we have that $\spn A_{I_N}\subset \mathscr{H}_{N+1}$. Therefore, our task is reduced to showing that for any $p\in \mathbb Z^d$ such that $\sum_{i=1}^{d}|p_i|=N+2$, the corresponding elements of the set $A_{I_{N+1}}$ can be generated by means of the ansatz 
	\begin{equation}\label{ansatz_N}
		f_0+\sum_{i=1}^{m}\Delta(f_i^3) \textnormal{ with } f_0,f_i \in \mathscr H_N \textnormal{ and some } m\in\mathbb N^*.
	\end{equation}
\begin{itemize}
		\item \textbf{The single angle}. We analyze the cases when $p\in \mathbb{Z}^d$ is equal to $\displaystyle (0,\ldots,\underbrace{N+2}_{i-\text{th}},\ldots,0)$. Let $\theta$ be equal to $x_i$, $i\in\inter{1,d}$. We note that 
		\begin{equation*}
		\sin( x \cdot p)=	\sin((N+2)\theta)=\sin((N+1)\theta+\theta)=\sin((N+1)\theta)\cos \theta + \cos((N+1)\theta)\sin \theta
		\end{equation*}
		By the induction hypothesis $\sin((N+1)\theta)$ and $\cos((N+1)\theta)$ belong to $\mathscr H_N$, hence using Lemma \ref{lem_aux}, we can write
		\begin{equation*}
			\sin((N+2)\theta)=Q(\sin((N+1)\theta),\cos \theta,1)+Q(\cos((N+1)\theta),\sin \theta,1)
		\end{equation*}
		and by applying the differential operator $-\Delta$ we have
		\begin{equation*}
			(N+2)^2\sin((N+2)\theta)=-\Delta Q(\sin((N+1)\theta),\cos \theta,1)-\Delta Q(\cos((N+1)\theta),\sin \theta,1)
		\end{equation*}
		which shows that $\sin((N+2)\theta)\in \mathscr H_{N+1}$ for $\theta=x_i$, $i\in\inter{1,d}$, by recalling Lemma \ref{lem_iden_Q} and \eqref{ansatz_N}. On the other hand, we have that 
		\begin{equation*}
			\cos((N+2)\theta)=\cos((N+1)\theta+\theta)=\cos((N+1)\theta)\cos \theta-\sin((N+1)\theta)\sin \theta.
		\end{equation*}
		Hence, using Lemma \ref{lem_aux} and Lemma \ref{lem_anti}, we can write
		\begin{equation*}
			\cos((N+2)\theta)=Q(\cos((N+1)\theta),\cos \theta,1)+Q(\sin((N+1)\theta),\sin \theta,-1),
		\end{equation*}
		whence
		\begin{equation*}
			(N+2)^2\cos((N+2)\theta)=-\Delta Q(\cos((N+1)\theta),\cos \theta,1)-\Delta Q(\sin((N+1)\theta),\sin \theta,-1),
		\end{equation*}
		and thus $\cos((N+2)\theta)\in \mathscr{H}_{N+1}$.
		
		\item \textbf{The sum of angles}. Let  $p\in \mathbb Z^d$ such that $\sum_{i=1}^{d}|p_i|=N+2$, and $|p_i|<N+2,$ for all $1\leq i\leq d $ be arbitrary. Then $p$ can be written as $p=l\pm q,$ where $l,q\in \mathbb Z^d $ such that $\sum_{i=1}^{d}|l_i|=N+1$, and $\sum_{i=1}^{d}|q_i|=1$ . Using trigonometric identities, 
		\begin{align*}
	&	\sin(x\cdot (l\pm q))=\sin(x\cdot l)\cos(x\cdot q)\pm \cos(x\cdot l )\sin(x\cdot  q)\\
	&	\cos(x\cdot (l\pm q))=\cos(x\cdot l)\cos(x\cdot q)\mp \sin(x\cdot l )\sin(x\cdot  q),
		\end{align*}
		along with
		Lemma \ref{lem_aux} and Lemma \ref{lem_anti}, we have
		\begin{align*}
				\sin(x\cdot p)&=\sin(x\cdot l)\cos(x \cdot q) \pm \cos(x\cdot l)\sin(x\cdot q) \\
			&=Q\left(\sin(x\cdot l),\cos(x \cdot q),1\right)+Q\left(\cos(x\cdot l),\sin(x \cdot q),\pm 1\right)
		\end{align*}
		Note that by the construction \eqref{construction_h}--\eqref{ansatz_PI}, the functions $\pm1$, $\sin(x\cdot l),\cos(x \cdot q), \cos(x\cdot l),\sin(x \cdot q)$ belong to $\mathscr{H}_{N}$. Hence, by taking $-\Delta$ on both sides of the previous inequality and Lemma \ref{lem_iden_Q} give
		\begin{align*}
			|p|^2\sin(x\cdot p)&=-\Delta Q\left(\sin(x\cdot l),\cos(x\cdot q),1\right)-\Delta Q\left(\cos(x\cdot l),\sin(x\cdot q),\pm 1\right)
		\end{align*}
		and hence $\sin(x\cdot p)$ with $\sum_{i=1}^{d}|p_i|=N+2$, and $|p_i|<N+2,$ belong to $\mathscr H_{N+1}$. Analogously, 
		\begin{align*}
			\cos(x\cdot p )&=\cos(x\cdot l)\cos(x\cdot q)\mp\sin(x \cdot l)\sin(x\cdot q) \\
			&=Q(\cos(x\cdot l),\cos(x\cdot q),1)+Q(\sin(x\cdot l),\sin(x \cdot q),\mp 1),
		\end{align*}
		from which we deduce that 
		\begin{align*}
			|p|^2\cos(x\cdot p)&=-\Delta Q\left(\cos(x\cdot l),\cos(x\cdot q),1\right)-\Delta Q\left(\sin(x\cdot l),\sin(x \cdot q),\mp 1\right)
		\end{align*}
		and thus $\cos(x\cdot p)$ belongs to $\mathscr H_{N+1}$. 
		
	\end{itemize}
\end{proof}

\section{Proof of global approximate controllability}\label{sec_global}
In this section, we will prove the main global approximate controllability result Theorem \ref{small_time}. Our argument follows the approach used in \cite[Theorem 3.3]{Ner21}. The strategy is to first prove small-time approximate controllability to any point of 
the affine space $u_0 + \mathscr{H}_N$ by combining \Cref{prop_asympt} with an induction argument 
in $N\in \mathbb{N}$. Using the saturation property \Cref{thm_sat}, we then deduce small-time approximate controllability to every point in $H^k(\TT)$. Finally, global approximate controllability in any time $T$ is obtained 
by steering the system close to the target $u_1$ in a short time and then keeping it 
near $u_1$ for a sufficiently long interval. Let us begin by proving global approximate controllability to $u_0+\mathscr{H}_N$ in very small time.
\begin{prop}\label{prop_small_time}
Equation \eqref{eq_main} is globally $H^k$-approximately controllable in arbitrarily small time $T>0$. More precisely, for any $u_0\in H^{k+2}(\TT),$ $u_1 \in u_0+\mathscr{H}_{N}$, and $\sigma, \epsilon>0$ there exists $\delta \in (0,\sigma)$ and a control $f\in L^{\infty}(0,\delta;\mathscr{H}_0)$ such that the solution of \eqref{eq_main} satisfies
	\begin{equation*}
		\| \mathcal R_\delta(u_0, 0, f) - u_1 \|_{H^{k}} < \epsilon.
	\end{equation*} 
	\end{prop}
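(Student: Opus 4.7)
The plan is to argue by double induction: an outer induction on $N\in\mathbb N$ and, within each outer step, an inner induction on the number $m$ of cubic terms in the canonical decomposition of the target. The three ingredients needed are \Cref{prop_asympt} (the asymptotic property), the flow/concatenation identities \eqref{flow_property}--\eqref{flow2}, and the stability estimate \eqref{stab_est}; the overall scheme follows \cite[Theorem 3.3]{Ner21}, adapted to the present fourth-order setting.

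The base case $N=0$ is immediate: for $u_1=u_0+\eta_0$ with $\eta_0\in\mathscr H_0$, the constant-in-time $\mathscr H_0$-valued control $f\equiv\delta^{-1}\eta_0$ inserted into \Cref{prop_asympt} (with $\varphi=0$) gives $\mathcal R_\delta(u_0,0,\delta^{-1}\eta_0)\to u_0+\eta_0$ in $H^k(\TT)$ as $\delta\to 0^+$, so it suffices to take $\delta\in (0,\sigma)$ small enough. For the outer step $N-1\to N$, every element of $\mathscr H_N=\mathcal F(\mathscr H_{N-1})$ can first be rewritten in the canonical form
\begin{equation*}
u_1-u_0=\eta+\sum_{i=1}^{m}\Delta(\varphi_i^3), \qquad \eta,\varphi_i\in\mathscr H_{N-1},
\end{equation*}
which is always possible because $\mathscr H_{N-1}$ is a linear space and the real coefficients in front of the cubes can be absorbed via $c\,\Delta(f^3)=\Delta((c^{1/3}f)^3)$ together with $-\Delta(f^3)=\Delta((-f)^3)$.

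The inner induction on $m$ is the core of the proof. The case $m=0$ reduces to reaching a target in $u_0+\mathscr H_{N-1}$ and is provided by the outer hypothesis. For the step $m-1\to m$, I would build $f$ by concatenating, via \eqref{flow2}, three pieces on consecutive intervals of total length less than $\sigma$: (i) on a first short interval, the outer hypothesis at level $N-1$ steers $u_0$ into any prescribed $H^k$-neighborhood of $u_\star:=u_0+\tau^{-1/3}\varphi_m$, which lies in $u_0+\mathscr H_{N-1}$ since $\mathscr H_{N-1}$ is a vector space; (ii) on a middle interval of length $\tau$ the control is taken to be zero, and \Cref{prop_asympt} with $\varphi=\varphi_m$, $\eta=0$ combined with the flow identity \eqref{flow_property} drives the trajectory, started from the exact state $u_\star$, into a small neighborhood of $u_{\star\star}:=u_\star+\Delta(\varphi_m^3)$; (iii) on a last short interval the inner hypothesis at level $m-1$ applied with base point $u_{\star\star}$ brings the trajectory close to $u_1$, using that
\begin{equation*}
u_1-u_{\star\star}=(\eta-\tau^{-1/3}\varphi_m)+\sum_{i=1}^{m-1}\Delta(\varphi_i^3)
\end{equation*}
involves only $m-1$ cubes and has linear part $\eta-\tau^{-1/3}\varphi_m\in\mathscr H_{N-1}$. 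Since each sub-control takes values in $\mathscr H_0$ (or is zero), the concatenation does too.

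The main obstacle is the error propagation, because the intermediate targets $u_\star,u_{\star\star}$ have $H^k$-norms of order $\tau^{-1/3}$ and the constant in \eqref{stab_est} depends on the norms of the solutions being compared. The quantifiers therefore have to be chosen in the correct order: first fix $\tau$ small enough that the asymptotic error from \Cref{prop_asympt} in step (ii) is below a preassigned threshold; only then, with $\tau$ and thus the sizes of $u_\star,u_{\star\star}$ frozen, choose the tolerances in steps (i) and (iii) so small that after being propagated through \eqref{stab_est}, whose constant now depends only on $\tau$ and the subinterval lengths, the final cumulative $H^k$-error stays below $\epsilon$. A secondary technical point is that \Cref{prop_asympt} requires $H^{k+2}$ regularity of its starting state whereas step (i) only delivers an $H^k$-approximation of $u_\star$; this is handled by approximating the actually reached state by a nearby $H^{k+2}$ function (the exact targets $u_\star,u_{\star\star}$ themselves belong to $H^{k+2}$, being $u_0\in H^{k+2}$ plus a trigonometric polynomial) and absorbing the additional discrepancy via \eqref{stab_est}.
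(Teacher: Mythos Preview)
Your proposal is correct and follows essentially the same approach as the paper: a double induction on $N$ and $m$, with the asymptotic property \Cref{prop_asympt} providing the single-cube step via the flow identity \eqref{flow_property}, and the stability estimate \eqref{stab_est} together with \Cref{max} handling error propagation once the asymptotic time $\tau$ is frozen first. The only cosmetic difference is that the paper organizes the inner induction by first establishing the case $m=1$ separately and then passing from $m-1$ to $m$ by appending the $m=1$ construction, whereas you embed the three-piece construction directly into the $m-1\to m$ step; both orderings are equivalent.
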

\begin{proof}
	Let us first take $u_0 \in H^{k+2}(\TT)$. 
	We shall establish the approximate controllability in small time from $u_0$ to $u_0 + \mathscr{H}_N$ by induction, along with the asymptotic property from \Cref{prop_asympt}. More precisely, our goal is to show the following for all $N\in \mathbb{N}$
	\begin{equation}\tag{$\mathcal P_N$}\label{propPN}
\begin{aligned}
&\text{For any } u_0\in H^{k+2}(\TT),\ u_1\in u_0+\mathscr H_N,\ \sigma,\epsilon>0, \text{there exists}\, \delta\in(0,\sigma) \text{ and a}\\
&\text{control } f\in L^\infty(0,\delta;\mathscr H_0),\ \text{such that the solution of \eqref{eq_main} is well-defined in $[0,\delta]$ and } \\ 
& \hspace{5 cm} \| \mathcal R_\delta(u_0,0,f)-u_1\|_{H^k}<\epsilon.
\end{aligned}
\end{equation}

	\noindent\textbf{Step 1. {Controllability from $u_0$ to $u_0 + \mathscr{H}_0$.}}  
	For $u_1 \in u_0 + \mathscr{H}_0$, there exists $\eta \in \mathscr{H}_0$ such that $u_1 = u_0 + \eta$.  
	Thanks to \Cref{prop_asympt},
	\begin{equation*}
	\mathcal R_\delta(u_0, 0, \delta^{-1} \eta) \to u_0 + \eta, \quad \text{as } \delta \to 0^+.
	\end{equation*}
	Hence, for any $\epsilon>0$ there exists $\delta \in (0, \sigma)$ such that the solution of \eqref{eq_main} is well-defined in $[0,\delta]$ and
	\begin{equation*}
		\| \mathcal R_\delta(u_0, 0, f_1) - u_1 \|_{H^{k}} < \epsilon,
	\end{equation*}
	where the constant (in time) control $f_1$ is of the form $f_1(t, \cdot) = \tfrac{1}{\delta}\eta(\cdot)$.  
	Thus, equation \eqref{eq_main} is small-time approximately controllable from $u_0$ to $u_0 + \mathscr{H}_0$ by means of an $\mathscr{H}_0$-valued control.
	
	\medskip
	\noindent\textbf{Step 2.{ Controllability from $u_0$ to $u_0 + \mathscr{H}_N$.} } 
	Assume that \eqref{eq_main} is approximately controllable from $u_0$ to $u_0 + \mathscr{H}_{N-1}$ using an $\mathscr{H}_0$ valued control. Let $\epsilon>0$ and $\sigma>0.$ Assume that $u_1 \in u_0 + \mathscr{H}_N$. Then there exist $\eta, \varphi_1, \dots, \varphi_m \in \mathscr{H}_{N-1}$ such that
	\begin{equation*}
	u_1 = u_0 + \eta + \sum_{i=1}^m \Delta (\varphi_i^3),
	\end{equation*}
	for some $m \in \mathbb{N}^*$. We proceed by induction on $m$. Consider $m=1.$ Then $u_1=u_0+\eta+\Delta(\varphi_1^3).$
 	Using \Cref{prop_asympt} 
 	we have 
 	\begin{align*}
 		\mathcal R_\delta(u_0, \delta^{-\frac{1}{3}}\varphi_1, 0)\to u_0 + \Delta(\varphi_1^3), \quad \text{ in } H^k(\TT) \text{ as } \delta \to 0^+.
 	\end{align*}
 	From the flow property \eqref{flow_property},  $\mathcal	R_\delta(u_0 + \delta^{-1/3}\varphi_1, 0, 0) - \delta^{-1/3}\varphi_1=R_\delta(u_0, \delta^{-\frac{1}{3}}\varphi_1, 0)$, we obtain
	\begin{equation*}
		\mathcal	R_\delta(u_0 + \delta^{-1/3}\varphi_1, 0, 0) - \delta^{-1/3}\varphi_1 \to u_0 +\Delta(\varphi_1^3)
		\quad \text{in } H^{k}({\TT}), \text{ as } \delta \to 0^+.
	\end{equation*}
	Hence, for any $\epsilon'>0$ there exists $\delta_2 \in (0, \sigma/3)$ such that
	\begin{equation}\label{apprx2}
		\| \mathcal R_{\delta_2}(u_0 + \delta_2^{-1/3}\varphi_1, 0, 0) - (u_0 + \delta_2^{-1/3}\varphi_1 + \Delta(\varphi_1^3)) \|_{H^{k}} < \epsilon'.
	\end{equation}
	Since $u_0 + \delta_2^{-1/3}\varphi_1 \in u_0 + \mathscr{H}_{N-1}$, the induction hypothesis implies that for any $\epsilon''>0$ there exists $\delta_1\in (0,\sigma/3)$ and a control 
	$f_2 \in L^\infty((0, \delta_1); \mathscr{H}_0)$ (piecewise constant in time) such that the solution $\mathcal R(u_0, 0, f_2)$ is well-defined in $[0,\delta_1]$ and satisfies
	\begin{equation}\label{aprx1}
	\|\mathcal R_{\delta_1}(u_0, 0, f_2) - (u_0 + \delta_2^{-1/3}\varphi_1) \|_{H^{k}} < {\epsilon''}.
	\end{equation}
 Since the solution $\mathcal R(u_0 + \delta_2^{-1/3}\varphi_1, 0,0)$ of \eqref{eq_main} is well-defined in $[0,\delta_2]$ and by \eqref{aprx1}, $\mathcal R_{\delta_1}(u_0, 0, f_2)$ and $u_0 + \delta_2^{-1/3}\varphi_1$ are arbitrarily close, using \Cref{max}, we can say that the solution $\mathcal R(\mathcal R_{\delta_1}(u_0, 0, f_2), 0, 0)$
 is well-defined in $[0,\delta_2].$ In fact, if we define the piecewise control $f$ on $[0, \delta_1 + \delta_2 ]$ as the following
 \begin{equation*}
 	f(t)=	\begin{cases}
 		f_2(t), & t\in (0,\delta_1),\\
 		0, &  t\in (\delta_1, \delta_1+\delta_2),
 	\end{cases}
 \end{equation*}
the solution $\mathcal R(u_0, 0, f)$ is well-defined in $[0,\delta_1+\delta_2],$ thanks to the flow property \eqref{flow2}.
 Moreover, combining inequalities \eqref{apprx2} and \eqref{aprx1}, stability estimate \eqref{stab_est}, and flow property \eqref{flow2}, we deduce that there exists $C(\delta_2)>0$ such that \begin{align}\label{apprx3}
 	\notag	&\|\mathcal R_{\delta_1+\delta_2}(u_0, 0, f) - (u_0 + \delta_2^{-1/3}\varphi_1 + \Delta(\varphi_1^3) ) \|_{H^{k}} \\
 	\notag	&\quad \quad\leq \|\mathcal R_{\delta_2}(\mathcal R_{\delta_1}(u_0, 0, f_2), 0, 0) - \mathcal R_{\delta_2}(u_0 + \delta_2^{-1/3}\varphi_1, 0, 0) \|_{H^{k}} \\
 	\notag & \qquad\qquad+\|\mathcal R_{\delta_2}(u_0 + \delta_2^{-1/3}\varphi_1, 0, 0) - (u_0 + \delta_2^{-1/3}\varphi_1 + \Delta(\varphi_1^3))  \|_{H^{k}}\\
 		&\quad \quad< C\epsilon''+ \epsilon'.
 \end{align}
	Let us denote $\hat{u}_0 = u_0 + \delta_2^{-1/3}\varphi_1 + \Delta(\varphi_1^3)$, which yields
	\begin{equation*}
		u_1=u_0+\eta+\Delta(\varphi_1^3)=\hat{u}_0+\eta-\delta_2^{-1/3}\varphi_1 \in \hat{u}_0+\mathscr{H}_{N-1}.
	\end{equation*} By induction hypothesis, there exists $\delta_3\in (0,\sigma/3)$ and a control
	$f_3 \in L^\infty((0, \delta_3); \mathscr{H}_0)$ such that
	\begin{equation}\label{apprx4}
	\| \mathcal R_{\delta_3}(\hat{u}_0, 0, f_3) - u_1 \|_{H^{k}} < \tfrac{\epsilon}{3}.
	\end{equation}
Since the solution $\mathcal R(\hat u_0 , 0,f_3)$ is well-defined in $[0,\delta_3]$ and by \eqref{apprx3}, $\mathcal R_{\delta_1+\delta_2}(u_0, 0, f)$ and $\hat u_0 $ are arbitrarily closed, using \Cref{max}, we can say that the solution $\mathcal R(\mathcal R_{\delta_1+\delta_2}(u_0, 0, f), 0, f_3)$ 
is well-defined in $[0,\delta_3].$  Let us define the piecewise control $\hat f$ on $[0, \delta_1 + \delta_2+\delta_3 ]$
	\begin{equation*}
	\hat f(t)=	\begin{cases}
			f(t), & t\in [0,\delta_1+\delta_2],\\
			f_3(t-\delta_1-\delta_2), &t\in (\delta_1+\delta_2,\delta_1+\delta_2+\delta_3],
		\end{cases}
	\end{equation*}
and choose $\delta:=\delta_1+\delta_2+\delta_3<\sigma$. Similarly to the argument in \eqref{apprx3}, using the stability estimate \eqref{stab_est}, the flow
	property \eqref{flow2}, and the inequalities \eqref{apprx3} and \eqref{apprx4}, we obtain a constant $C'>0$ depending on $\delta_3$ 
	 such that 
	\begin{align*}
\notag	&\|\mathcal R_\delta(u_0, 0, \hat f) - u_1 \|_{H^{k}}\\
\notag	& = \|\mathcal R_{\delta_3}\left(\mathcal R_{\delta_1+\delta_2}(u_0, 0, f), 0, f_3\right) - u_1 \|_{H^{k}}\\
\notag	&\qquad \leq\|\mathcal R_{\delta_3}\left(\mathcal R_{\delta_1+\delta_2}(u_0, 0, f), 0, f_3\right) - \mathcal R_{\delta_3}(\hat{u}_0, 0, f_3) \|_{H^{k}}+\left\|\mathcal R_{\delta_3}(\hat{u}_0, 0, f_3)-u_1\right\|_{H^k}\\
	&< C'\left(C\epsilon''+ \epsilon'\right)+\epsilon/3.
	\end{align*}
Choosing $\epsilon', \epsilon''$ small enough such that $C'\left(C\epsilon''+ \epsilon'\right)<2\epsilon/3$. This proves the case for $m=1$.
	
	Next, assume that the result holds for $m-1$. Let us denote	\begin{equation*}
	u_1 = u_0 + \eta + \sum_{i=1}^m \Delta(\varphi_i^3),
	\qquad \text{where } \eta, \varphi_1, \ldots, \varphi_m \in \mathscr{H}_{N-1}.
	\end{equation*}
	By the induction hypothesis, there exist $\delta_1 \in (0, \sigma/2)$ and $f_4 \in L^\infty((0, \delta_1); \mathscr{H}_0)$ such that
	\begin{equation*}
	\|\mathcal R_{\delta_1}(u_0, 0, f_4) - \big(u_0 + \eta + \sum_{i=1}^{m-1} \Delta (\varphi_i^3)\big) \|_{H^{k}} < \frac{\epsilon}{2}.
	\end{equation*}
	Define 
	\begin{equation*}
		\widetilde{u}_0 = u_0 + \eta +\sum_{i=1}^{m-1} \Delta(\varphi_i^3),
	\end{equation*}
	so that $u_1 = \widetilde{u}_0 + \Delta(\varphi_m^3)$.  
	Using the result for $m=1$ with $\widetilde{u}_0$, there exist $\delta_2 \in (0, \sigma/2)$ and $f_5 \in L^\infty((0, \delta_2); \mathscr{H}_0)$ such that
	\begin{equation*}
	\| \mathcal R_{\delta_2}(\widetilde{u}_0, 0, f_5) - u_1 \|_{H^{k}} < \frac{\epsilon}{2}.
	\end{equation*}
	Let $\delta = \delta_1 + \delta_2$, so that $\delta \in (0, \sigma)$, and define the control
	\begin{equation*}
	\widetilde f(t) =
	\begin{cases}
		f_4(t), & t \in [0, \delta_1], \\
		f_5(t-\delta_1), & t \in (\delta_1, \delta_1 + \delta_2].
	\end{cases}
	\end{equation*}
Arguing as above, we deduce that the solution $\mathcal R(u_0,0,\widetilde f)$ of \eqref{eq_extension} is well-defined in $[0,\delta_1+\delta_2].$
	Combining the results for $m-1$ and $m=1$, we obtain
	\begin{equation*}
	\| \mathcal R_\delta(u_0, 0, \widetilde f) - u_1 \|_{H^{k}} <\epsilon.
	\end{equation*}
	This completes the proof for $m$.
	
	\textbf{Step 3: Global approximate controllability at small time in $H^{k}({\TT})$.}  
	Let $u_0, u_1 \in H^{k}({\TT})$ and $\epsilon, \sigma > 0$.  
	By the density of $H^{k+2}({\TT})$ in $H^{k}({\TT})$, there exists $\tilde{u}_0 \in H^{k+2}({\TT})$ such that
	\begin{equation}\label{est_apprx12}
		\|u_0 - \tilde{u}_0\|_{H^{k}} < \epsilon/3. 
	\end{equation}
	By the saturating property, there exists $\varphi_N \in \mathscr{H}_N$ satisfying
	\begin{equation}\label{est_apprx11}
		\|u_1 - (\tilde{u}_0 + \varphi_N)\|_{H^{k}} < \epsilon/3. 
	\end{equation}
	Using the result of Step2, there exist $\delta \in (0, \sigma)$ and $f \in L^\infty((0, \delta); \mathscr{H}_0)$ such that
	\begin{equation}\label{est_apprx1}
		\|\mathcal R_\delta(\tilde{u}_0, 0, f) - (\tilde{u}_0 + \varphi_N)\|_{H^{k}} < \epsilon/3. 
	\end{equation}
	Combining inequalities \eqref{est_apprx12}--\eqref{est_apprx1} with the stability estimate \eqref{stab_est} in \Cref{Lipschitz type}, and using triangle inequality, we deduce that
	\begin{align*}
			\|\mathcal R_\delta({u}_0, 0, f) - u_1\|_{H^{k}} < \epsilon.
	\end{align*}
	 Thus system \eqref{eq_main} exhibits 
	 global $H^k$-approximate controllability at sufficiently small time with an $\mathscr{H}_0$-valued control.
\end{proof}
\begin{proof}[{Proof of Theorem \ref{small_time}}]
	Let $T > 0$ and $\epsilon > 0$.  
	Given $u_0, u_1 \in H^{k}({\TT})$, we aim to show the existence of a control 
	$\eta \in L^\infty(0, T; \mathscr{H}_0)$ such that the solution of \eqref{eq_main} satisfies
	\begin{equation*}
		\|\mathcal R_T(u_0, 0, \eta) - u_1\|_{H^{k}} < \epsilon.
	\end{equation*}
	Using the approximate controllability result in small-time (see \Cref{prop_small_time}), we always find $\delta>0$ and a control $f\in L^{\infty}(0,\delta; \mathscr{H}_{0})$ such that
	\begin{equation*}
		\|\mathcal R_{\delta}(u_0, 0, f) - u_1\|_{H^{k}} < \epsilon/2.
	\end{equation*}
Denote $v_1=\mathcal R_{\delta}(u_0, 0, f).$ Using the continuity of the solution, thanks to \Cref{wellposed2}, we always can find $\tau>0$ sufficiently small such that for all $t\in [0,\tau]$ 
	\begin{equation*}
		\|\mathcal R_{t}(v_1, 0, 0) - v_1\|_{H^{k}} < \epsilon/2.
	\end{equation*}
Which implies solution $\mathcal R(\mathcal R_\delta(u_0,0,f),0,0)$ exists in $[0,\tau],$ thanks to \Cref{max}.
Next, let us denote a control $\eta_1$ over the time interval $[0, \delta+\tau]$ as follows
\begin{equation*}
	\eta_1(t)=\begin{cases}
		f(t) & t \in  [0,\delta],\\
		0  & t \in (\delta, \delta+\tau].
	\end{cases}
\end{equation*}
It is straightforward to check that the solution of \eqref{eq_main} satisfies
\begin{equation*}
	\|\mathcal R_{\delta+\tau}(u_0, 0, \eta_1) - u_1\|_{H^{k}} < \epsilon.
\end{equation*}
Observe that if $\delta+\tau\geq T$, the proof is complete. If not, we repeat the above process a finite number of times until we reach the time $T$. More precisely, we can always find $\delta' > 0$ and a control $\eta \in L^\infty((0,\delta' + \delta + 2\tau); \mathscr{H}_0)$ of the form
\begin{equation*}
	\eta(t)=\begin{cases}
		\eta_1(t), & t \in  [0,\delta+\tau],\\
		f_1(t-\delta-\tau),  & t \in (\delta+\tau, \delta+\tau+\delta'],\\
		0, & t \in (\delta+\tau+\delta', \delta+2\tau+\delta'],
	\end{cases}
\end{equation*}
 such that the following holds
\begin{equation*}
	\|\mathcal R_{\delta + 2\tau + \delta' }(u_0, 0, \eta) - u_1\|_{H^{k}} < \epsilon.
\end{equation*}
If $\delta + 2\tau + \delta' > T$, the proof is finished.  
Otherwise, we iterate this process finitely many times, reaching time $T$ with the desired proximity to $u_1$.
Therefore, the system is globally $H^k$-approximately controllable at time $T$ with an $\mathscr{H}_0$-valued control.
\end{proof}

\section{Global null controllability of the Cahn-Hilliard equation}\label{sec_globalnull}
This section is devoted to the proof of Theorem \ref{global_null} and Theorem \ref{global_nulld2}. The proofs of these results rely on the global approximate controllability established in Theorem \ref{small_time}, together with a local null controllability result. To state this local null controllability result we first consider the following system with localized interior control
\begin{equation}\label{eq_nonlin}
	\begin{cases}
		u_t +\Delta^2 u  +\Delta u=\Delta(u^3)+f\mathbf{1}_{\omega} & \text{in }  (0, T) \times \mathbb{T}^d , \\
		u(0) = u_0 & \text{in } \mathbb{T}^d.
	\end{cases}
\end{equation} 
We have the following local null controllability result.
\begin{prop}[Local null controllability]\label{res_non}
	Let $T>0$ and $\omega\subset \mathbb{T}^d$ be any measurable set with positive Lebesgue measure. The Cahn-Hilliard equation \eqref{eq_nonlin} is locally null controllable. That is, there exist $R_1, R_2>0$ such that the following holds:
	\begin{enumerate}
		\item\label{point-1} If $d\leq 2,$ for any $u_0\in L^2(\mathbb{T}^d)$ satisfying $\norm{u_0}_{L^2}\leq R_1$, there exists a control $f \in L^\infty\left(0,T;L^\infty(\omega)\right)$ such that system \eqref{eq_nonlin} satisfies $ u(T,\cdot) = 0$ in $\mathbb{T}^d.$
		
		\item\label{point-2} If $d=3,$ for any $u_0\in H^2(\mathbb{T}^d)$ satisfying $\norm{u_0}_{H^2}\leq R_2$, there exists a control $f \in L^\infty\left(0,T;L^\infty(\omega)\right)$ such that system \eqref{eq_nonlin} satisfies $ u(T,\cdot) = 0$ in $\mathbb{T}^d.$
	\end{enumerate}  
\end{prop}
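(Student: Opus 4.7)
The plan follows the three-step strategy described in Section \ref{strategies}: linearize, apply the source term method to absorb the cubic nonlinearity, and close the argument by a fixed-point iteration.

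The first and essential ingredient is null controllability of the linearized equation
\begin{equation*}
v_t + \Delta^2 v + \Delta v = f \mathbf{1}_\omega, \qquad v(0) = v_0.
\end{equation*}
By duality, this reduces to an observability inequality for the backward adjoint $-\varphi_t + \Delta^2 \varphi + \Delta \varphi = 0$, which I would prove via the Lebeau--Robbiano strategy. Its key ingredient is a spectral inequality of the form
\begin{equation*}
\|\pi_N g\|_{L^2(\mathbb{T}^d)} \leq C e^{CN} \|\pi_N g\|_{L^2(\omega)},
\end{equation*}
where $\pi_N$ is the Fourier projector onto modes $|k| \leq N$, which also diagonalizes $\Delta^2 + \Delta$ on $\mathbb{T}^d$. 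Since $\pi_N g$ is a trigonometric polynomial of degree $N$ (hence the restriction of a real-analytic function with a controlled analytic extension), the inequality reduces to a propagation of smallness statement, which is precisely what is established in \cite{BM23, GLMO25}: for measurable sets of positive Lebesgue measure (case \ref{point-1} and \ref{point-2}), and more refinedly, for closed sets of positive $(d-s)$-Hausdorff content (needed for Theorem \ref{global_nulld2}). Combined with the high-frequency dissipation of the fourth-order semigroup (modes $|k| \geq N$ decay at rate $e^{-cN^4 t}$), the Lebeau--Robbiano iteration produces observability with an explicit control cost of the form $C e^{C/T^\alpha}$ for some $\alpha > 0$. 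The finite-dimensional unstable subspace of $\Delta^2 + \Delta$, corresponding to modes where $|k|^2 - |k|^4 > 0$, is absorbed in the first iteration in the standard way.

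With the linear null controllability and its cost estimate in hand, I would apply the source term method of Liu--Takahashi--Tucsnak \cite{Tucsnak_nonlinear}: time-dependent weights $\rho, \rho_s$ blowing up near $t=T$ define weighted spaces $E$ (for the state) and $F$ (for the source) such that the controllability operator $\Psi: (v_0, g) \mapsto (v, f)$, producing a null-controlled trajectory of $v_t + \Delta^2 v + \Delta v = f\mathbf{1}_\omega + g$, is bounded from $X \times F$ into $E \times L^\infty(0,T; L^\infty(\omega))$. Setting $\Phi(u) := v$ with $(v,f) = \Psi(u_0, \Delta(u^3))$, any fixed point of $\Phi$ in a small ball of $E$ yields the desired null-controlled solution of \eqref{eq_nonlin}. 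To apply the Banach fixed-point theorem, one needs the trilinear estimate $\|\Delta(u^3)\|_F \lesssim \|u\|_E^3$, which exposes the Sobolev embedding threshold: in $d \leq 2$, the energy regularity $u \in L^\infty(0,T;L^2) \cap L^2(0,T;H^2)$, combined with the embeddings $H^1 \hookrightarrow L^p$ for every $p < \infty$, suffices to handle $\Delta(u^3) = 3u^2 \Delta u + 6u|\nabla u|^2$ in a negative-order weighted space; in $d=3$, the embedding $H^2 \hookrightarrow L^\infty$ is the natural scale, forcing $E$ to live at the $H^2$-level and smallness to be imposed on $\|u_0\|_{H^2}$, which explains the dichotomy between \ref{point-1} and \ref{point-2}.

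The main obstacle will be the fine-tuned design of the weights defining $(E,F)$: they must grow fast enough near $t=T$ to absorb the exponential control cost $e^{C/T^\alpha}$, yet remain compatible with the trilinear estimate for $\Delta(u^3)$ in the corresponding weighted norms. The three-dimensional case is especially delicate, since it saturates the Sobolev embedding and forces the state to be controlled at the $H^2$-level; one must moreover verify that the control space $L^\infty(0,T;L^\infty(\omega))$ prescribed by the statement is consistent with the regularity produced by the source term machinery when applied to the spectral inequalities of \cite{BM23, GLMO25}, rather than with the more commonly used $L^2$ control space.
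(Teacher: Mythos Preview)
Your proposal is correct and follows essentially the same route as the paper: spectral inequality from \cite{BM23} plus Lebeau--Robbiano iteration to obtain observability with cost $Ce^{C/T}$, then the source term method of \cite{Tucsnak_nonlinear} (adapted to $L^\infty$ controls as in \cite{LB20}) with explicit exponential weights, closed by a Banach fixed point on $S\mapsto \Delta(u^3)$. The only cosmetic difference is that for $d\le 2$ the paper does not expand $\Delta(u^3)$ but simply estimates $\|\Delta(u^3)\|_{H^{-2}}\le C\|u^3\|_{L^2}$ via the Gagliardo--Nirenberg inequality $\|u\|_{L^\infty}\le C\|u\|_{L^2}^{1/2}\|u\|_{H^2}^{1/2}$, and for $d=3$ it uses that $H^2(\mathbb T^3)$ is an algebra together with a source-term result at the $H^2$ level (Proposition \ref{lin1}).
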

We begin by establishing the null controllability of the associated linearized system.
\subsection{Controllability of the linearized problem}
Let us first consider linear system with non-homogeneous source term
\begin{equation}\label{eq_source}
	\begin{cases}
		u_t +\Delta^2 u  +\Delta u=f\mathbf{1}_{\omega}+h & \text{in }  (0, T) \times \mathbb{T}^d , \\
		u(0) = u_0 & \text{in } \mathbb{T}^d.
	\end{cases}
\end{equation}
The following well-posedness result holds.
\begin{prop}\label{well_source}
	Let $T>0.$ For any $u_0\in L^2(\mathbb{T}^d),$ $h\in L^2(0,T;H^{-2}(\mathbb{T}^d))$, $f\in L^2(0,T;L^2(\mathbb{T}^d))$ equation \eqref{eq_source} possesses a unique solution in $X_T^0=C([0,T];L^2(\mathbb{T}^d))\cap L^2(0,T;H^2(\mathbb{T}^d))$. Moreover, we have the following estimate
	\begin{align*}
		\norm{u}_{X_T^0}\leq Ce^{CT}\bigg(\norm{u_0}_{L^2(\mathbb{T}^d)}+\norm{f}_{L^2(0,T;L^2(\mathbb{T}^d))}+\norm{h}_{L^2(0,T;H^{-2}(\mathbb{T}^d))}\bigg).
	\end{align*}
\end{prop}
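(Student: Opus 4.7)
The plan is to combine a semigroup construction of the mild solution with a direct energy estimate, with the only non–trivial point being the need to absorb the lower–order (ill–signed) contribution coming from the $\Delta u$ term and to handle the negative–order source $h$ by duality.

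First, I would use the operator $\mathcal A = -\Delta^2 - \Delta$ introduced in \eqref{abstract}. On the Fourier side $\mathcal A$ is the Fourier multiplier by $\lambda_k = |k|^2 - |k|^4 = -|k|^2(|k|^2-1)$, which is non–positive for every $k\in\mathbb Z^d$ and satisfies $\lambda_k \to -\infty$. Hence $\{\mathcal S(t)\}_{t\ge 0}$ is in fact a contraction semigroup on $L^2(\TT)$, and moreover the usual parabolic smoothing estimate
\begin{equation*}
\|\mathcal S(t)v\|_{L^2} \;\lesssim\; (1+t^{-1/2})\,\|v\|_{H^{-2}}, \qquad t>0,
\end{equation*}
holds directly from the Fourier representation. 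This makes Duhamel's formula
\begin{equation*}
u(t) = \mathcal S(t)u_0 + \int_0^t \mathcal S(t-s)\bigl(f(s)\mathbf 1_\omega + h(s)\bigr)\,ds
\end{equation*}
well defined and continuous with values in $L^2(\TT)$ for any $u_0\in L^2$, $f\in L^2(0,T;L^2)$ and $h\in L^2(0,T;H^{-2})$; uniqueness is then automatic.

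Next I would establish the energy estimate. Testing the equation against $u$ (formally; see below for the regularization), we obtain
\begin{equation*}
\frac{1}{2}\frac{d}{dt}\|u\|_{L^2}^2 + \|\Delta u\|_{L^2}^2
= \|\nabla u\|_{L^2}^2 + \int_\omega f\,u\,dx + \langle h,u\rangle_{H^{-2},H^2}.
\end{equation*}
The indefinite term is controlled by the interpolation inequality
$\|\nabla u\|_{L^2}^2 \le \|u\|_{L^2}\|\Delta u\|_{L^2} \le \tfrac14\|\Delta u\|_{L^2}^2 + \|u\|_{L^2}^2$. For the source terms, Cauchy--Schwarz gives $|\int_\omega fu|\le \|f\|_{L^2}\|u\|_{L^2}$, while the duality pairing satisfies
\begin{equation*}
|\langle h,u\rangle_{H^{-2},H^2}| \le \|h\|_{H^{-2}}\|u\|_{H^2}
\le C\,\|h\|_{H^{-2}}\bigl(\|u\|_{L^2}+\|\Delta u\|_{L^2}\bigr)
\le \tfrac14\|\Delta u\|_{L^2}^2 + C\bigl(\|u\|_{L^2}^2+\|h\|_{H^{-2}}^2\bigr),
\end{equation*}
by Young's inequality, using the norm equivalence $\|u\|_{H^2}\simeq \|u\|_{L^2}+\|\Delta u\|_{L^2}$ on $\TT$. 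Combining these bounds yields
\begin{equation*}
\frac{d}{dt}\|u\|_{L^2}^2 + \|\Delta u\|_{L^2}^2 \le C\bigl(\|u\|_{L^2}^2 + \|f\|_{L^2}^2 + \|h\|_{H^{-2}}^2\bigr).
\end{equation*}
Grönwall's lemma gives the $C([0,T];L^2)$ bound with prefactor $Ce^{CT}$, and integrating the inequality then recovers the $L^2(0,T;H^2)$ bound, which together make up the norm of $X_T^0$.

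The only delicate point is that the identity above is not justified directly for the mild solution. I would handle this by approximation: mollify $u_0$, $f$ and $h$ by smooth objects, for which the strong solution exists and is smooth enough to make the integration by parts rigorous; then pass to the limit using the energy estimate itself (which is preserved under the approximation) together with the linearity and uniqueness of the mild formulation. The main obstacle is therefore more bookkeeping than substance: one must be careful with the $H^{-2}$ source when justifying the pairing $\langle h,u\rangle$, but this is exactly what the smoothing property of $\mathcal S(t)$ and the density of smooth functions in $L^2(0,T;H^{-2})$ are designed to accommodate.
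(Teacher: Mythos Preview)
Your proposal is correct and follows essentially the same approach as the paper: the paper does not give a dedicated proof of this proposition, but the energy-estimate technique you use (test against $u$, absorb the ill-signed $\|\nabla u\|_{L^2}^2$ via interpolation and Young's inequality, handle the $H^{-2}$ source by duality, then apply Gr\"onwall) is exactly the method carried out in the appendix for the analogous estimate in Lemma~\ref{est_semigroup}. Your additional remarks on the Duhamel/mild formulation and the approximation argument to justify the pairing $\langle h,u\rangle_{H^{-2},H^2}$ are a useful complement that the paper leaves implicit.
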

We prove the null controllability of the linear control system with localized interior control
\begin{equation}\label{eq_lin}
	\begin{cases}
		u_t +\Delta^2 u  +\Delta u=f\mathbf{1}_{\omega} & \text{in }  (0, T) \times \mathbb{T}^d , \\
		u(0) = u_0 & \text{in } \mathbb{T}^d.
	\end{cases}
\end{equation}
Our goal is to establish the following observability inequality which essentially provides the required null controllability of the linearized system \eqref{eq_lin}. 
\begin{prop}\label{observ} Let $m>0$, $T>0$ and $u_0\in L^2(\TT).$ Then the following holds:
	\begin{enumerate}\label{point_1}
		\item for any $\omega_1 \subset \TT$ with $|\omega| > m$, there exists a constant $C>0$ only depending on $d$ and $m$ such that the solution $u$ of \eqref{eq_lin} with $f=0$ satisfies
		\begin{align*}
			\norm{u(T,\cdot)}_{L^2(\TT)}\leq Ce^{CT+\frac{C}{T}}\int_{0}^{T}\|u(t)\|_{L^1(\omega_1)} dt.
		\end{align*}
	\item\label{point_2} There exists $s\in (0,1)$ such that for any measurable set $\omega_2 \subset \TT$ with $\mathcal{C}^{d-s}_{\mathcal H}(\omega_2)>m$, there exists constant $C > 0$ only depending on $d$, $m$, and $s$, such that the solution $u$ of \eqref{eq_lin} with $f=0$ satisfies
	\begin{align*}
		\norm{u(T,\cdot)}_{L^2(\TT)}\leq Ce^{CT+\frac{C}{T}}\int_{0}^{T}\sup_{x\in \omega_2}|u(t,x)| dt.
		\end{align*}
	\end{enumerate}
\end{prop}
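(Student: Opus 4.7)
The plan is to follow the Lebeau--Robbiano strategy in the version developed in \cite{BM23,GLMO25} for linear parabolic equations observed from sets of positive Lebesgue measure (respectively positive Hausdorff content). The crucial structural observation is that, on $\TT^d$, the operator $\Delta^2+\Delta$ is diagonalized by the Fourier basis $\{e^{ix\cdot k}\}_{k\in\Z^d}$ with eigenvalues $|k|^4-|k|^2$. Consequently, for any $\Lambda\geq 2$, the spectral subspace
\begin{equation*}
E_\Lambda:=\spn\{e^{ix\cdot k}: |k|\leq \Lambda\}
\end{equation*}
coincides with the image of the Laplace spectral projector up to level $\Lambda^2$, and its elements are trigonometric polynomials of degree at most $\Lambda$. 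This reduces the required spectral inequality on $E_\Lambda$ to classical propagation-of-smallness for linear combinations of Laplace eigenfunctions on the torus.

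The first step is to establish, in both settings, a spectral inequality of the form $\|f\|_{L^2(\TT)}\leq Ce^{C\Lambda}\,N_{\omega_i}(f)$ for every $f\in E_\Lambda$, where $N_{\omega_1}(f)=\|f\|_{L^1(\omega_1)}$ in case \eqref{point_1} and $N_{\omega_2}(f)=\sup_{x\in\omega_2}|f(x)|$ in case \eqref{point_2}. For \eqref{point_1} this is a Nazarov/Turán-type inequality for multivariate trigonometric polynomials on a set of positive measure; for \eqref{point_2} it is the recent Logunov--Malinnikova/Zhu propagation-of-smallness in Hausdorff content, precisely as exploited in \cite{BM23,GLMO25}. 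The value $s\in(0,1)$ of the statement arises here as the threshold permitted by those results in dimension $d$.

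The second step is a high-frequency dissipation estimate. Writing $u=\Pi_\Lambda u+(I-\Pi_\Lambda)u$ and using that the eigenvalues $-|k|^4+|k|^2$ satisfy $-|k|^4+|k|^2\leq -c|k|^4$ for $|k|\geq \Lambda\geq 2$, the free evolution obeys
\begin{equation*}
\|(I-\Pi_\Lambda)u(t)\|_{L^2(\TT)}\leq e^{-c\Lambda^4 t}\|(I-\Pi_\Lambda)u(0)\|_{L^2(\TT)}.
\end{equation*}
The third step is the telescoping iteration: I split $[0,T]$ into geometric subintervals $[T_n,T_{n+1}]$ with $T_{n+1}-T_n\simeq T 2^{-n}$ and choose cutoff levels $\Lambda_n=\rho\,2^n$. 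On each slab I apply the spectral inequality to $\Pi_{\Lambda_n}u(t)$, integrated in time against $\int_{T_n}^{T_{n+1}}N_{\omega_i}(u(t))\,dt$, and use the dissipation estimate to pass from $\|u(T_n)\|_{L^2}$ to $\|u(T_{n+1})\|_{L^2}$ with an extra factor $e^{C\Lambda_n-c\Lambda_n^4(T_{n+1}-T_n)}$. Choosing $\rho$ so that the exponent is summable yields
\begin{equation*}
\|u(T)\|_{L^2(\TT)}\leq Ce^{CT+\tfrac{C}{T}}\int_0^T N_{\omega_i}(u(t))\,dt,
\end{equation*}
where the factor $e^{C/T}$ encodes the blow-up as $T\to 0^+$ coming from the geometric scheme, and $e^{CT}$ absorbs the possible growth of low modes with $|k|=1$, whose eigenvalues vanish.

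The main obstacle I anticipate is a clean proof of the spectral inequality on $E_\Lambda$ with the correct right-hand side norm (either $L^1(\omega_1)$ or $\sup_{\omega_2}$), together with an exponential-in-$\Lambda$ constant that is both sharp enough for the iteration to converge and uniform in the geometric parameters of $\omega_i$. In case \eqref{point_2}, the dependence of the constants on the Hausdorff content and on $s$ must be traced carefully through the propagation-of-smallness argument. A secondary technical point is to perform the Lebeau--Robbiano telescoping in such a way that the right-hand side remains the $L^1$-in-time norm $\int_0^T N_{\omega_i}(u(t))\,dt$ rather than an $L^2$-in-time norm, which is needed downstream to apply the source term method of \cite{Tucsnak_nonlinear} and, ultimately, the fixed-point argument yielding \Cref{res_non}.
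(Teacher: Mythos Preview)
Your proposal is correct and follows the same overall Lebeau--Robbiano strategy as the paper, relying on the spectral inequality of \cite{BM23} together with the fourth-order dissipation of the free evolution. The only real difference is in the packaging of the iteration: you propose the classical version with an explicit frequency cutoff $\Lambda_n$ chosen at each geometric subinterval and a direct summability check of $e^{C\Lambda_n - c\Lambda_n^4(T_{n+1}-T_n)}$, whereas the paper (following \cite{AEWZ14,BM23}) first combines the spectral inequality and the dissipation into a single H\"older-type interpolation estimate
\[
\|u(t)\|_{L^2}\le N e^{N/(t-r)}\,N_{\omega_i}(u(t))^{\theta}\,\|u(r)\|_{L^2}^{1-\theta},
\]
and only then telescopes, with the optimization over $\Lambda$ already absorbed into the exponent $\theta$. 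Your route is slightly more explicit and self-contained; the paper's route avoids tracking the frequency levels and makes the telescoping step shorter. Both yield the same final estimate with the $L^1$-in-time right-hand side needed downstream.
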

By combining \Cref{observ} with a duality argument in the spirit of the classical Hilbert Uniqueness  method (see \cite[Theorem 2.48]{coron}), we derive the following null controllability results (see \cite[Section 5]{BM23} for more details). 
\begin{prop}\label{control}
	Let $T > 0$ and $m>0$. Then for any 
	$u_0 \in L^2(\mathbb{T}^d)$, and $\omega \subset \mathbb{T}^d $ with $|\omega| > m$ there exists 
	$f\in L^{\infty}\left(0,T;L^\infty(\TT)\right)$, supported in $(0,T)\times \omega$, 
	such that the  solution $u$ of \eqref{eq_lin} satisfies $u(T,\cdot) = 0.$
	Moreover,
	there exists a constant $C>0$ only depending on $d$ and $m$ such that, the control
	$f \in L^{\infty}(0,T;L^\infty(\TT))$ satisfies
	\begin{equation}\label{controlcost}
		\|f\|_{L^{\infty}((0,T)\times \TT )} 
		\le C\, e^{C/T} \|u_0\|_{L^2(\TT)}.
	\end{equation}
\end{prop}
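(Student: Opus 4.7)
The plan is to derive \Cref{control} from the observability estimate of \Cref{observ} (first item) by a classical Hilbert Uniqueness--type duality argument, in the spirit of \cite[Section 5]{BM23}. Since the operator $-\Delta^2 - \Delta$ is self-adjoint on $L^2(\TT)$, the adjoint backward problem $-\phi_t + \Delta^2 \phi + \Delta \phi = 0$, $\phi(T) = \phi_T$, reduces to the forward free Cahn--Hilliard equation via the time reversal $\psi(s) := \phi(T-s)$. Applying the observability bound to $\psi$ and changing variable yields
\begin{equation*}
\norm{\phi(0)}_{L^2(\TT)} \le C e^{CT + C/T} \int_0^T \norm{\phi(t)}_{L^1(\omega)}\, dt \qquad \forall\, \phi_T \in L^2(\TT).
\end{equation*}

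\medskip

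\noindent\textbf{Duality construction of the control.} I would define, on the subspace $E := \{\phi|_{(0,T)\times \omega} : \phi_T \in L^2(\TT)\} \subset L^1((0,T)\times\omega)$, the linear functional $L(\phi|_{(0,T)\times\omega}) := -\int_\TT u_0\,\phi(0)\, dx$. By Cauchy--Schwarz and the estimate above,
\begin{equation*}
\abs{L(\phi|_{(0,T)\times\omega})} \le \norm{u_0}_{L^2} \norm{\phi(0)}_{L^2} \le C e^{CT + C/T} \norm{u_0}_{L^2} \norm{\phi}_{L^1((0,T)\times\omega)},
\end{equation*}
so $L$ is well-defined and bounded on $E$. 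I would then extend $L$ by Hahn--Banach to a continuous linear form on $L^1((0,T)\times\omega)$ with the same norm, and use the duality $(L^1((0,T)\times\omega))^* = L^\infty((0,T)\times\omega)$ to realize this extension as an element $f \in L^\infty((0,T)\times\omega)$, extended by zero outside $\omega$, satisfying $\norm{f}_{L^\infty((0,T)\times \TT)} \le C e^{CT + C/T} \norm{u_0}_{L^2}$ and
\begin{equation*}
\int_0^T\!\!\int_\omega f(t,x)\, \phi(t,x)\, dx\, dt = -\int_\TT u_0(x)\, \phi(0,x)\, dx \qquad \forall\, \phi_T \in L^2(\TT).
\end{equation*}
Multiplying \eqref{eq_lin} by $\phi$ and integrating by parts in space and time, using self-adjointness and the well-posedness given by \Cref{well_source}, leads to $\int_\TT u(T)\phi_T\, dx = 0$ for every $\phi_T \in L^2(\TT)$, and therefore $u(T) = 0$.

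\medskip

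\noindent\textbf{Sharpening the cost and main obstacle.} The bound just obtained is of the form $C e^{CT + C/T}$; to reduce it to the stated $C e^{C/T}$ I would split into regimes. For $T \le 1$ the factor $e^{CT} \le e^C$ can be absorbed into the prefactor. For $T \ge 1$, I would apply the previous construction on the subinterval $[0,1]$, obtaining a control of cost at most a universal constant $Ce^{2C}\norm{u_0}_{L^2}$ that drives $u$ to zero at time $1$; extending $f \equiv 0$ on $[1,T]$ and using that the zero state is stationary for the free dynamics gives $u(T) = 0$ with the same bound, which is dominated by $C' e^{C/T}\norm{u_0}_{L^2}$ since $e^{C/T}\ge 1$. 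The chief technical delicacy is the Hahn--Banach/Riesz step when $\omega$ is merely a measurable set of positive Lebesgue measure (one must identify $(L^1((0,T)\times\omega))^* = L^\infty((0,T)\times\omega)$ carefully for such $\omega$), together with justifying the integration-by-parts identity between $u$ and $\phi$ at the weak-solution level provided by \Cref{well_source}.
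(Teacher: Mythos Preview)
Your proposal is correct and follows the same approach as the paper: the paper does not give a detailed proof of this proposition but simply invokes the classical Hilbert Uniqueness duality argument, citing \cite[Theorem 2.48]{coron} and \cite[Section 5]{BM23}, which is exactly what you carry out. Your additional step of splitting into $T\le 1$ and $T>1$ to absorb the $e^{CT}$ factor is the standard way to obtain the stated cost $Ce^{C/T}$, and your concern about the $(L^1)^*=L^\infty$ identification is not a real obstacle since $(0,T)\times\omega$ with Lebesgue measure is $\sigma$-finite.
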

Let us denote $\mathcal M$ by the space of  Borel measure on $\TT,$  where for any $\phi\in C(\TT),$ $\mu(\phi)=\int_{\TT}\phi \, d\mu$ and $\mathcal M$ is endowed with the norm
\begin{align*}
	\norm{\mu}:=\sup_{\phi\in C(\TT), \norm{\phi}_{L^\infty(\TT)}\leq 1}\left|\int_{\TT} \phi \, d\mu\right|.
\end{align*}
\begin{prop}\label{control1}
	Let $T > 0$, $m>0$ and $u_0 \in L^2(\mathbb{T}^d)$. Then there exists $s\in (0,1)$ such that for any
	 closed set $\omega_2\subset \TT$ satisfying $\mathcal{C}^{d-s}_{\mathcal H}(\omega_2)>m,$ there exists  
	$f\in L^{\infty}(0,T;\mathcal M)$, supported in $(0,T)\times \omega_2$, 
	such that the  solution $u$ of \eqref{eq_lin} satisfies $u(T,\cdot) = 0.$
	Moreover,
	there exists a constant $C $ only depending on $d$, $m$ and $s$ such that, the control
	$f \in L^{\infty}(0,T;\mathcal M)$ satisfies
	\begin{equation}\label{controlcost1}
		\|f\|_{L^{\infty}(0,T;\mathcal M)} 
		\le C\, e^{C/T} \|u_0\|_{L^2(\TT )}.
	\end{equation}
\end{prop}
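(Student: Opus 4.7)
The plan is to derive \Cref{control1} from part~\ref{point_2} of \Cref{observ} by a duality argument along the lines of the Hilbert Uniqueness Method, paralleling the proof of \Cref{control} and following the scheme of \cite[Section 5]{BM23}. The only substantive change compared to \Cref{control} is that the observation norm is now $L^1(0,T;C(\omega_2))$ instead of $L^1(0,T;L^1(\omega))$, which is why the control will be constructed in the topological dual $L^\infty(0,T;\mathcal M(\omega_2))$ rather than in $L^\infty((0,T)\times\omega)$.

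First I would set up the adjoint backward system
\begin{equation*}
-\varphi_t + \Delta^2 \varphi + \Delta \varphi = 0 \ \text{in } (0,T)\times\TT, \qquad \varphi(T,\cdot)=\varphi_T \in L^2(\TT),
\end{equation*}
which enjoys, after a standard time reversal, the observability estimate
\begin{equation*}
\|\varphi(0,\cdot)\|_{L^2(\TT)} \leq C e^{CT + C/T} \int_0^T \|\varphi(t,\cdot)\|_{C(\omega_2)}\, dt,
\end{equation*}
where closedness of $\omega_2$ ensures that the supremum appearing in \Cref{observ}(2) coincides with the $C(\omega_2)$-norm. Next I introduce the non-Hilbertian HUM functional on $L^2(\TT)$
\begin{equation*}
J(\varphi_T) := \tfrac{1}{2}\left(\int_0^T \|\varphi(t,\cdot)\|_{C(\omega_2)}\,dt\right)^{\!2} + \int_{\TT} u_0\, \varphi(0,\cdot)\, dx,
\end{equation*}
which is convex and continuous. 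The observability inequality above, together with Cauchy--Schwarz, yields the coercivity bound
\begin{equation*}
J(\varphi_T) \geq \tfrac{1}{2}\|\varphi\|_{L^1(0,T;C(\omega_2))}^2 - C e^{CT+C/T}\|u_0\|_{L^2}\,\|\varphi\|_{L^1(0,T;C(\omega_2))},
\end{equation*}
so that $J$ attains its infimum at some $\hat\varphi_T\in L^2(\TT)$ with $\|\hat\varphi\|_{L^1(0,T;C(\omega_2))} \leq C e^{CT+C/T}\|u_0\|_{L^2}$.

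Finally, writing the first-order optimality condition at $\hat\varphi_T$ and exploiting the duality $(L^1(0,T;C(\omega_2)))^* = L^\infty(0,T;\mathcal M(\omega_2))$ produces a control $f\in L^\infty(0,T;\mathcal M)$ supported in $(0,T)\times\omega_2$ that drives $u_0$ to zero; the cost bound \eqref{controlcost1} transfers verbatim from the coercivity constant. The main technical obstacle lies precisely in extracting $f$ as a measure-valued datum: since the term $\|\varphi\|_{L^1(0,T;C(\omega_2))}^2$ is not smooth, its subdifferential is a set of elements of $L^\infty(0,T;\mathcal M(\omega_2))$, and one needs a measurable-selection argument to produce a time-measurable family $f(t,\cdot)$ of Radon measures on $\omega_2$ realizing the control. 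This step is carried out in \cite[Section 5]{BM23} in the second-order setting, and adapts directly to the fourth-order operator $\partial_t + \Delta^2 + \Delta$ once the observability of \Cref{observ}(2) is at hand.
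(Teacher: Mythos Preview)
Your proposal is correct and follows exactly the route the paper indicates: the paper does not give a detailed proof of \Cref{control1} but simply states that it follows from \Cref{observ} via a HUM-type duality argument, referring to \cite[Section~5]{BM23} for the details, which is precisely what you outline. If anything, your sketch is more explicit than the paper's own treatment.
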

\subsubsection{Key estimates leading to the observability inequality}
In order to prove the above mentioned observability inequality \Cref{observ}, we start with
assuming that the so called Lebeau–Robbiano type spectral inequality holds on $\TT$. To introduce it, we write
$0 \leq \lambda_1 \le \lambda_2 \le \cdots$
for the eigenvalues of \(-\Delta\) in $\TT$, and
\(\{e_j: j \ge 1\}\) for the set of \(L^2(\TT)\)-normalized eigenfunctions, that is, $
-\Delta e_j = \lambda_j e_j  \text{ in } \TT.
$
For \(\lambda > 0\), we define for any $f \in L^2(\TT)$
\begin{equation*}
\mathcal E_\lambda f = \sum_{\lambda_j \le \lambda} (f, e_j)_{L^2}\, e_j,
\qquad
\mathcal  E_\lambda^{\perp} f = \sum_{\lambda_j > \lambda} (f, e_j)_{L^2}\, e_j,
\end{equation*}
where
$(f, e_j)_{L^2} = \int_{\TT} f\, e_j\, dx, \, j \ge 1.$
We have the following spectral estimates due to \cite[Theorem 1]{BM23}.
\begin{prop}\label{spectral}
There exists $s\in (0,1)$ such that	for any $m > 0$, there are constants $C, D > 0$ such that for any $\omega_1, \omega_2 \subset \TT$ with $|\omega_1| > m$,  and $\mathcal{C}^{d-s}_{\mathcal H}(\omega_2)>m$ 
	and for any $\lambda > 0$,  for all  $\varphi \in L^2(\TT)$ we have the following estimates.
	\begin{align}
\label{spectral_est1}	&	\|\mathcal{E}_\lambda\varphi\|_{L^\infty(\TT)} \le C e^{D \sqrt{\lambda}} 
		\|\mathcal{E}_\lambda\varphi \|_{L^1(\omega_1)},\\
	\label{spectral_est2}	&\|\mathcal{E}_\lambda\varphi\|_{L^\infty(\TT)} \le C e^{D \sqrt{\lambda}} 
		\sup_{x\in \omega_2}|\mathcal{E}_\lambda\varphi|.
	 \end{align}
\end{prop}
Inspired by \cite[Theorem 6]{AEWZ14} and \cite[Theorem 8]{BM23}, and using the above spectral estimates \eqref{spectral_est1}-\eqref{spectral_est2} together with the exponential decay property of the associated semigroup, our next goal is to derive interpolation inequalities for solutions of the Cahn–Hilliard system. It may also be interpreted as a quantitative form of the strong unique continuation property for the Cahn–Hilliard equation.
\begin{prop}
	Let $\theta\in(0,1)$ and $m>0$. Assume that $|\omega_1|> m$ and $\mathcal{C}^{d-s}_{\mathcal H}(\omega_2)>m$, where $s\in (0,1)$ is the same as in \Cref{spectral}.
	Then there exist constants $N,C>0$  such that for all $0\le r<t$ we have
	\begin{align}
\label{lone}	&	\|u(t,\cdot)\|_{L^2(\TT)} 
		\le 
		N e^{\frac{N}{(t-r)}} 
		\|u(t,\cdot)\|_{L^1(\omega_1)}^{\theta} 
		\|u(r,\cdot)\|_{L^2(\TT)}^{1-\theta},\\
	\label{linfty}	&	\|u(t,\cdot)\|_{L^2(\TT)} 
		\le 
		N e^{\frac{N}{(t-r)}} 
		\sup_{x\in \omega_2}|u(t,x)|^{\theta} 
		\|u(r,\cdot)\|_{L^2(\TT)}^{1-\theta},
		\end{align}
	where $u$ is a solution of \eqref{eq_lin} with $f=0$.
\end{prop}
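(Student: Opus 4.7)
The plan is to establish both interpolation inequalities \eqref{lone} and \eqref{linfty} by a Lebeau--Robbiano frequency-splitting argument, combining the spectral estimates \eqref{spectral_est1}--\eqref{spectrail_est2} with the dissipation and smoothing of the linear Cahn--Hilliard semigroup $\{\mathcal S(t)\}_{t\ge 0}$ generated by the operator $\mathcal A=-\Delta^2-\Delta$ defined in \eqref{abstract}. Because the eigenvalues of $-\Delta$ on $\TT$ lie in $\{0\}\cup\{|k|^2:k\in\mathbb Z^d\setminus\{0\}\}\subset\{0\}\cup[1,\infty)$, we have $\lambda_j^2-\lambda_j\ge 0$ for every mode, so $\mathcal S(\tau)$ is an $L^2$-contraction; on the projection $\mathcal E_\lambda^\perp$ with $\lambda\ge 2$, $\lambda_j^2-\lambda_j\ge\lambda^2/2$, which yields the dissipation bound
\begin{equation*}
\|\mathcal E_\lambda^\perp\mathcal S(\tau)f\|_{L^2}\le e^{-\lambda^2\tau/2}\|f\|_{L^2},\qquad \tau>0.
\end{equation*}
A parallel smoothing bound $\|\mathcal E_\lambda^\perp\mathcal S(\tau)f\|_{L^\infty}\le C\tau^{-\alpha}e^{-\lambda^2\tau/4}\|f\|_{L^2}$ (for some $\alpha=\alpha(d)>0$) follows from Sobolev embedding combined with the elementary semigroup smoothing $\lambda_j^s e^{-\lambda_j^2\tau/2}\lesssim\tau^{-s/2}$; it is used only in the pointwise version.

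Fixing $\lambda\ge 2$ and splitting $u(t)=\mathcal E_\lambda u(t)+\mathcal E_\lambda^\perp u(t)$, I apply \eqref{spectral_est1} to the low-frequency piece, together with the triangle inequality $\|\mathcal E_\lambda u(t)\|_{L^1(\omega_1)}\le \|u(t)\|_{L^1(\omega_1)}+|\omega_1|^{1/2}\|\mathcal E_\lambda^\perp u(t)\|_{L^2}$ and the dissipation estimate above with initial time $r$. Absorbing the cross-term $e^{2D\sqrt\lambda-c\lambda^2(t-r)}$ into the exponential decay, which is legitimate once $\lambda^{3/2}\ge 4D/(c(t-r))$, produces the key additive estimate
\begin{equation*}
\|u(t)\|_{L^2}^2\le C e^{2D\sqrt\lambda}\|u(t)\|_{L^1(\omega_1)}^2 + C e^{-c\lambda^2(t-r)/4}\|u(r)\|_{L^2}^2.
\end{equation*}
The $\sup_{\omega_2}$ analogue uses \eqref{spectrail_est2} and the $L^\infty$ smoothing bound in place of the $L^2$ one; its right-hand side is meaningful even when $|\omega_2|=0$ because $\mathcal E_\lambda u(t)$ is a trigonometric polynomial and $u(t,\cdot)$ itself is smooth on $\TT$ for $t>r$ by the same smoothing.

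To extract the multiplicative interpolation from this additive estimate, I choose $\sqrt{\lambda^\ast}=(1-\theta)\log\rho/D$ with $\rho:=\|u(r)\|_{L^2}/\|u(t)\|_{L^1(\omega_1)}$; a direct computation shows that at this $\lambda^\ast$ the first summand equals exactly $C\|u(t)\|_{L^1(\omega_1)}^{2\theta}\|u(r)\|_{L^2}^{2(1-\theta)}$, while the second is bounded, after maximizing the exponent $2\theta\log\rho-c_\theta(t-r)(\log\rho)^4/D^4$ in $\log\rho$, by $Ce^{C_\theta(t-r)^{-1/3}}\|u(t)\|_{L^1(\omega_1)}^{2\theta}\|u(r)\|_{L^2}^{2(1-\theta)}$. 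In the complementary regime where $\lambda^\ast$ is below the admissibility threshold (equivalently $\log\rho\lesssim(t-r)^{-1/3}$), the $L^2$-contraction $\|u(t)\|_{L^2}\le\|u(r)\|_{L^2}$ together with this upper bound on $\log\rho$ yields the same estimate. Taking square roots and using $(t-r)^{-1/3}\le(t-r)^{-1}$ for $t-r\le 1$ (with constants absorbing the case $t-r\ge 1$) gives \eqref{lone}; the proof of \eqref{linfty} is identical with $\|u(t)\|_{L^1(\omega_1)}$ replaced by $\sup_{x\in\omega_2}|u(t,x)|$ and \eqref{spectrail_est2} in place of \eqref{spectral_est1}.

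The main obstacle is this last interpolation step: the mismatch between the $e^{D\sqrt\lambda}$ spectral cost and the fourth-order decay $e^{-c\lambda^2\tau}$ naturally produces a prefactor $e^{N/\tau^{1/3}}$ that is \emph{sharper} than the one claimed, so the case-splitting and the precise choice of $\lambda^\ast$ must be carried out carefully to ensure that the $\theta$-exponents combine cleanly into $\|u(t)\|_{L^1(\omega_1)}^{2\theta}\|u(r)\|_{L^2}^{2(1-\theta)}$ and that all boundary subcases (small $\log\rho$, bounded $t-r$) absorb into the single uniform bound $Ne^{N/(t-r)}$. A secondary subtlety in \eqref{linfty} is the pointwise meaning of $\sup_{x\in\omega_2}|u(t,x)|$ when $|\omega_2|=0$; this is resolved by exploiting the instantaneous smoothing of $\mathcal S(\cdot)$, which guarantees $u(t,\cdot)\in C^\infty(\TT)$ for every $t>r$.
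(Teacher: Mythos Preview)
Your plan is correct and shares the paper's core mechanism: split $u(t)=\mathcal E_\lambda u(t)+\mathcal E_\lambda^\perp u(t)$, apply the spectral inequalities \eqref{spectral_est1}--\eqref{spectrail_est2} to the low-frequency part, and use the high-frequency dissipation $\|\mathcal E_\lambda^\perp u(t)\|\lesssim e^{-\lambda^2(t-r)/2}\|u(r)\|$ for $\lambda\ge 2$ to obtain an additive estimate of the form $\|u(t)\|_{L^2}\le Ne^{N\sqrt\lambda}b + (\text{decay})\,c$ with $b=\|u(t)\|_{L^1(\omega_1)}$ (or $\sup_{\omega_2}|u(t)|$) and $c=\|u(r)\|_{L^2}$.

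The genuine difference is in how you pass from this additive bound to the multiplicative interpolation $a\le N e^{N/(t-r)}b^\theta c^{1-\theta}$. The paper first \emph{coarsens} the fourth-order decay, replacing $e^{-\lambda^2(t-r)/2}$ by $e^{-\lambda(t-r)}$ (legitimate for $\lambda\ge 2$), then uses $\max_{\lambda\ge 0}e^{N\sqrt\lambda-C\lambda(t-r)}\le e^{N^2/4C(t-r)}$ to reach an inequality of the form $a\le N e^{N/(t-r)}(\epsilon^{-N}b+\epsilon^{1/N}c)$ with $\epsilon=e^{-\lambda(t-r)}\in(0,1]$; it then invokes an auxiliary algebraic lemma (the paper's Lemma~\ref{opt}) which yields the interpolation inequality but only for the \emph{specific} exponent $\theta=1/(N^2+1)$ dictated by the spectral constant $N$. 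Your route instead retains the full $e^{-c\lambda^2(t-r)}$, selects $\sqrt{\lambda^\ast}=(1-\theta)D^{-1}\log\rho$ tailored to the prescribed $\theta$, and handles the complementary regime $\log\rho\lesssim(t-r)^{-1/3}$ by combining the $L^2$-contraction of $\mathcal S(\cdot)$ with the bound on $\rho$. This gives the inequality for \emph{every} $\theta\in(0,1)$, as actually stated, and with the sharper prefactor $e^{N/(t-r)^{1/3}}$; the cost is the extra case analysis you describe. For the pointwise estimate \eqref{linfty}, both arguments hinge on the same $L^\infty$ smoothing of $\mathcal E_\lambda^\perp\mathcal S(\tau)$ (the paper phrases it as an $H^m$ bound with $m>d/2$), and your remark that $u(t,\cdot)\in C^\infty(\TT)$ for $t>0$ so that $\sup_{\omega_2}|u(t,\cdot)|$ is well defined is exactly what is needed.
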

\begin{proof}
	Let $0 \le r < t$, $\lambda\geq 2$ and $u(r,\cdot) \in L^2(\TT)$. 
	It can be checked that the following holds
	\begin{equation}\label{dis2}
	\|\mathcal{E}_\lambda^{\perp} u(t,\cdot)\|_{L^2(\TT)} 
	\le e^{-\frac{\lambda^2}{2} (t-r)} \|u(r,\cdot)\|_{L^2(\TT)}.
	\end{equation}Indeed, the solution $u$ of \eqref{eq_lin} with $f=0$ can be written as
$\displaystyle u=\sum_{k} u_k e^{-(\lambda_k^2-\lambda_k)t}e_k.$ Whence,
\begin{align*} \mathcal{E}_\lambda^{\perp} u(t,x)=\sum_{\lambda_k>\lambda} u_k e^{-(\lambda_k^2-\lambda_k)t}e_k=\sum_{\lambda_k>\lambda} e^{-(\lambda_k^2-\lambda_k)(t-r)}  \left(u_k\,e^{-(\lambda_k^2-\lambda_k)r}  \, e_k\right).
\end{align*} Since $\lambda_k>\lambda\geq 2,$ $-(\lambda_k^2-\lambda_k)\leq -\frac{\lambda^2}{2}$, so we have \eqref{dis2}.
	Next, it follows from \Cref{spectral} that
	\begin{align*}
		\|u(t,\cdot)\|_{L^2(\TT)} 
		&\le \|\mathcal{E}_\lambda u(t,\cdot)\|_{L^2(\TT)} 
		+ \|\mathcal{E}_\lambda^{\perp} u(t,\cdot)\|_{L^2(\TT)} \\
		&\le C e^{D\sqrt{\lambda}} \|\mathcal{E}_\lambda u(t,\cdot)\|_{L^1(\omega_1)} 
		+ e^{-\frac{\lambda^2}{2} (t-r)} \|u(r,\cdot)\|_{L^2(\TT)} \\
		&\le C e^{D\sqrt{\lambda}} 
		\Big( \|u(t,\cdot)\|_{L^1(\omega_1)} 
		+ \|\mathcal{E}_\lambda^{\perp} u(t,\cdot)\|_{L^2(\TT)} \Big)
		+ e^{-\frac{\lambda^2}{2} (t-r)} \|u(r,\cdot)\|_{L^2(\TT)} \\
		&\le N e^{N\sqrt{\lambda}} 
		\Big( \|u(t,\cdot)\|_{L^1(\omega_1)} 
		+ e^{-\frac{\lambda^2}{2} (t-r)} \|u(r,\cdot)\|_{L^2(\TT)} \Big),  \text{ for some } N>1.
	\end{align*}
	Consequently as $\lambda \geq 2$,
	\begin{equation}\label{est7}
		\|u(t,\cdot)\|_{L^2(\TT)} 
		\le 
		N e^{N\sqrt{\lambda}}
		\Big( 
		\|u(t,\cdot)\|_{L^1(\omega_1)} 
		+ e^{-\lambda (t-r)} \|u(r,\cdot)\|_{L^2(\TT)} 
		\Big).
	\end{equation}
Applying Young's inequality, we deduce
	\begin{equation*}
		\max_{\lambda \ge 0} e^{N\sqrt{\lambda} - C \lambda (t-r)} 
		\le e^{\frac{N^2}{4C(t-r)}} 
		\quad \text{for all } C>0,
	\end{equation*}
and further it follows from \eqref{est7} that, for each $\lambda \geq 2$
	\begin{align*}
	\|u(t,\cdot)\|_{L^2(\TT)} 
	&\le N e^{N\sqrt{\lambda}}
	\Big(e^{-N\lambda (t-r)}
	e^{N\lambda (t-r)} \|u(t,\cdot)\|_{L^1(\omega_1)} 
	+ e^{-\frac{N-1}{N}\lambda(t-r)}e^{-\frac{\lambda (t-r)}{N}} \|u(r,\cdot)\|_{L^2(\TT)}
	\Big)\\
	&\le N 
	\Big(e^{\frac{N}{4(t-r)}}
	e^{N\lambda (t-r)} \|u(t,\cdot)\|_{L^1(\omega_1)} 
	+ e^{\frac{N^3}{4(N-1)(t-r)}}e^{-\frac{\lambda (t-r)}{N}} \|u(r,\cdot)\|_{L^2(\TT)}
	\Big).
	\end{align*}
	Therefore we have the existence of some constant $C_1>0$ such that
\begin{equation*}
	\|u(t,\cdot)\|_{L^2(\TT)} 
	\le N e^{\frac{C_1}{t-r}}\big(e^{N\lambda (t-r)} \|u(t,\cdot)\|_{L^1(\omega_1)} +e^{-\frac{\lambda (t-r)}{N}} \|u(r,\cdot)\|_{L^2(\TT)}
	\big).
\end{equation*}
	Setting $\epsilon := e^{-\lambda (t-r)}$ in the above estimate gives
	\begin{equation}\label{est8}
		\|u(t,\cdot)\|_{L^2(\TT)} 
		\le 
		N e^{\frac{C_1}{t-r}}
		\Big(
		\epsilon^{-N} \|u(t,\cdot)\|_{L^1(\omega_1)} 
		+ \epsilon^{\frac{1}{N}} \|u(r,\cdot)\|_{L^2(\TT)}
		\Big),
	\end{equation}
	for all $\epsilon\in(0,1]$. Also note that the solution $u$ of \eqref{eq_lin} with $f=0$ satisfies for some $C>0$
	\begin{equation}\label{eq_dis1}
		\|u(t,\cdot)\|_{L^2(\TT)} 
		\le e^{C(t-r)}\|u(r,\cdot)\|_{L^2(\TT)}\leq e^{CT} \|u(r,\cdot)\|_{L^2(\TT)}
		\quad \text{when } t > r.
	\end{equation}
	Minimizing the right-hand side of \eqref{est8} with respect to 
	$\epsilon \in (0,1)$, together with the fact \eqref{eq_dis1} we can get the desired inequality \eqref{lone}. More precisely we have used the following result, whose proof can be found in \Cref{proof_aux_lema}
	\begin{lem}\label{opt}
		Let $a,b,c > 0$ and $M, C_1, N > 0$. Assume that for all $ \epsilon \in(0,1]$,
		\begin{equation*}
		a \le M\big( \epsilon^{-N} b + \epsilon^{\frac{1}{N}} c \big),
		\qquad 
		a \le C_1 c.
		\end{equation*}
		Then, there exist constants $C_0 > 0$ and $\theta \in (0,1)$ depending only on $M$, $C_1$ and $N$, such that
		\begin{equation*}
		a \le C_0\, b^{\theta} c^{1-\theta}
		\qquad 
		\text{with } \theta = \frac{1}{N^2+1}.
		\end{equation*}
	\end{lem}
In order to prove \eqref{linfty}, we use a similar strategy. Using  estimate \eqref{spectral_est2} we have for $m>d/2$
\begin{align}\label{linfty_est1}
	\|u(t,\cdot)\|_{L^2(\TT)} 
\notag	&\le \|\mathcal{E}_\lambda u(t,\cdot)\|_{L^2(\TT)} 
	+ \|\mathcal{E}_\lambda^{\perp} u(t,\cdot)\|_{L^2(\TT)} \\
\notag	&\le C e^{D\sqrt{\lambda}}\sup_{x\in \omega_2} |\mathcal{E}_\lambda u(t,x)|
	+\|\mathcal{E}_\lambda^{\perp} u(t,\cdot)\|_{L^2(\TT)}\\
\notag	&\le C e^{D\sqrt{\lambda}} 
	\Big( \sup_{x\in \omega_2}|u(t,x)|
	+ \|\mathcal{E}_\lambda^{\perp} u(t,\cdot)\|_{H^m(\TT)} \Big)
	+\|\mathcal{E}_\lambda^{\perp} u(t,\cdot)\|_{L^2(\TT)} \\
	&\le N e^{N\sqrt{\lambda}} 
	\Big( \sup_{x\in \omega_2}|u(t,x)|
	+\|\mathcal{E}_\lambda^{\perp} u(t,\cdot)\|_{H^m(\TT)} \Big),
\end{align}
where $N$ is some positive constant.

\noindent
Observe that, the second term in the above inequality can be estimated
as follows
\begin{align}\label{linfty_est2}
\|\mathcal{E}_\lambda^{\perp} u(t,\cdot)\|^2_{H^m(\TT)}
\notag	&= 
	\sum_{\lambda_k>\lambda}
	\left(  e^{-2(\lambda_k^2-\lambda_k)(t-r)} (1+\lambda_k)^m \right)
	e^{-2(\lambda_k^2-\lambda_k)r} |u_{k}|^{2}\\
\notag	&\leq \sup_{\lambda_k>\lambda}e^{-2\delta(\lambda_k^2-\lambda_k)(t-r)}	(1+\lambda_k)^m\sum_{\lambda_k>\lambda}
	\left(  e^{-2(1-\delta)(\lambda_k^2-\lambda_k)(t-r)}  \right)
 e^{-2(\lambda_k^2-\lambda_k)r}|u_{k}|^{2}\\
 &\leq e^{-\delta \lambda(t-r)}	(1+\lambda)^me^{-2(1-\delta) \lambda(t-r)}\norm{u(r,\cdot)}^2_{L^2(\TT)},
\end{align}
for some $\delta\in(0,1)$. 
Using $x^m/m!\leq e^x$ for any $x\geq 0$ and $m\geq0$, we infer from \eqref{linfty_est2} that
\begin{align}\label{linfty_est3}
\notag	\|\mathcal{E}_\lambda^{\perp} u(t,\cdot)\|^2_{H^m(\TT)}
	\notag	& \leq m!\left(\frac{2}{\delta}\right)^m
	\left(\frac{1}{t-r}\right)^{m} e^{-2(1-\delta) \lambda(t-r)}\norm{u(r,\cdot)}^2_{L^2(\TT)}\\
	&\leq C_{m,\delta} 
	e^{\frac{1}{t-r}} e^{-2(1-\delta) \lambda(t-r)}\norm{u(r,\cdot)}^2_{L^2(\TT)},
\end{align}
where $C_{m,\delta}>0$ is a constant independent of $\lambda$.
 
Thus combining estimates \eqref{linfty_est1} and \eqref{linfty_est3}, we deduce the existence of a constant $N>0$ (possibly larger than the one in \eqref{linfty_est1}) such that the following holds:
\begin{align*}
	\|u(t,\cdot)\|_{L^2(\TT)} 
	&\le N e^{\frac{1}{t-r}} e^{N\sqrt{\lambda}} 
	\Big(\sup_{x\in \omega_2}|u(t,x)|
	+ e^{- \frac{\lambda}{2} (t-r)}\norm{u(r,\cdot)}_{L^2(\TT)}\Big).
\end{align*}
Proceeding with a similar argument as in the previous case, we conclude the proof.
\end{proof}
As a consequence of the above interpolation inequalities, we obtain the following result.
\begin{cor}\label{time_estimate}
	Let $m > 0$. Assume that $|\omega_1| > m$ and $\mathcal{C}^{d-s}_{\mathcal H}(\omega_2)>m$.
	Then, for any $D, B \ge 1$, there exist constants $A, C > 0$ such that for all 
	$0 < t_1 < t_2 \le T$ and for the solution  $u$ of \eqref{eq_lin} with $f=0$, the following estimates hold
	\begin{align*}
		e^{-\frac{A}{t_2 - t_1}}\big\| u(t_2,\cdot)  \big\|_{L^2(\TT)}
		-e^{- \frac{D A}{t_2 - t_1}} \big\|  u(t_1,\cdot) \big\|_{L^2(\TT)}
		&\le
		C \int_{t_1}^{t_2}\,
		\big\| u(r,\cdot) \big\|_{L^1(\omega_1)} \, dr.\\
		e^{-\frac{A}{t_2 - t_1}}\big\| u(t_2,\cdot)  \big\|_{L^2(\TT)}
		-e^{- \frac{D A}{t_2 - t_1}} \big\|  u(t_1,\cdot) \big\|_{L^2(\TT)}
		&\le
		C \int_{t_1}^{t_2}\,
		\sup_{x\in \omega_2}|u(r,x)| \, dr.
	\end{align*}
\end{cor}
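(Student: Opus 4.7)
The plan is to derive both estimates from the corresponding interpolation inequality—\eqref{lone} for the first and \eqref{linfty} for the second—together with the elementary semigroup bound $\|u(t_2)\|_{L^2}\le e^{C(t_2-r)}\|u(r)\|_{L^2}$ for $r\in[t_1,t_2]$, which is a consequence of \eqref{eq_dis1}. Since the two arguments are formally identical after replacing $\|\cdot\|_{L^1(\omega_1)}$ by $\sup_{x\in\omega_2}|\cdot|$, I focus on the first estimate and write $\tau:=t_2-t_1$.

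Fix $\theta\in(0,1)$ to be chosen, and let $r\in[(t_1+t_2)/2,t_2]$, so that $r-t_1\ge\tau/2$ and $e^{N/(r-t_1)}\le e^{2N/\tau}$. Applying \eqref{lone} at time $r$ with reference $t_1$ and using the semigroup bound to replace $\|u(r)\|_{L^2}$ by a multiple of $\|u(t_2)\|_{L^2}$, I get
\begin{equation*}
\|u(t_2)\|_{L^2}\le \tilde N\, e^{2N/\tau}\,\|u(r)\|_{L^1(\omega_1)}^{\theta}\,\|u(t_1)\|_{L^2}^{1-\theta}.
\end{equation*}
The weighted Young inequality $a^{\theta}b^{1-\theta}\le \alpha a+\alpha^{-\theta/(1-\theta)}b$ with $\alpha=e^{K/\tau}$, followed by multiplication by $e^{-A/\tau}$, yields
\begin{equation*}
e^{-A/\tau}\|u(t_2)\|_{L^2}\le \tilde N\,e^{(2N+K-A)/\tau}\|u(r)\|_{L^1(\omega_1)}+\tilde N\,e^{(2N-K\theta/(1-\theta)-A)/\tau}\|u(t_1)\|_{L^2}.
\end{equation*}
Choosing $A:=2N+K$ makes the first exponent vanish and collapses the second one to $-K/((1-\theta)\tau)$. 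Picking $\theta>(D_0-1)/D_0$ for some $D_0>D$ at our disposal, and then $K$ large enough that $K/(1-\theta)\ge D_0 A$ (compatible precisely because $1-D_0(1-\theta)>0$), I obtain the pointwise estimate
\begin{equation*}
e^{-A/\tau}\|u(t_2)\|_{L^2}\le C_1\|u(r)\|_{L^1(\omega_1)}+C_2\,e^{-D_0 A/\tau}\|u(t_1)\|_{L^2}
\end{equation*}
for every $r\in[(t_1+t_2)/2,t_2]$.

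Integrating this in $r$ over the interval of length $\tau/2$ produces the desired time integral of $\|u(r)\|_{L^1(\omega_1)}$ on the right but introduces a factor $1/\tau$ on the left:
\begin{equation*}
e^{-A/\tau}\|u(t_2)\|_{L^2}-C_2\,e^{-D_0 A/\tau}\|u(t_1)\|_{L^2}\le \frac{2C_1}{\tau}\int_{t_1}^{t_2}\|u(r)\|_{L^1(\omega_1)}\,dr.
\end{equation*}
The main technical obstacle is now to convert this into the form of the statement, namely to kill the $1/\tau$ on the right and reduce the multiplicative constant $C_2$ to $1$. Both issues are handled by a single perturbation: multiply through by $e^{-\epsilon/\tau}$ for a small $\epsilon>0$. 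The $1/\tau$ is absorbed because $\sup_{\tau>0}e^{-\epsilon/\tau}/\tau<\infty$; and provided $D_0$ was chosen so that $(D_0-D)A\ge T\ln C_2+(D-1)\epsilon$ (which is always achievable by taking $\theta$ close enough to $1$ and $A$ correspondingly large), the bound $C_2\,e^{-D_0 A/\tau}\le e^{-DA/\tau}$ holds uniformly on $\tau\in(0,T]$. Renaming $A+\epsilon\mapsto A$ gives the first estimate, and the same argument using \eqref{linfty} in place of \eqref{lone} yields the second.
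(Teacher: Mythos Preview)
Your argument is correct. The paper does not give a proof of this corollary and simply refers to \cite[Corollary 4.1]{BM23}, so there is nothing detailed to compare against; your proposal supplies the missing argument along the standard route (interpolation inequality, Young's inequality, integration in $r$, and absorption of the residual $1/\tau$ factor and the multiplicative constant via the $e^{-\epsilon/\tau}$ trick).

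One point worth flagging: in order to accommodate an arbitrary $D\ge 1$ you need $\theta>(D_0-1)/D_0$ with $D_0>D$, i.e.\ $\theta$ close to $1$. You are therefore using the freedom, granted by the \emph{statement} of the preceding proposition, to take any $\theta\in(0,1)$ (with $N$ depending on $\theta$). Be aware that the paper's own proof of that proposition, via Lemma \ref{opt}, only produces the single value $\theta=1/(N^2+1)$, which is close to $0$. The interpolation inequality does hold for every $\theta\in(0,1)$, but this requires going back to the $\lambda$-parametrized estimate \eqref{est7} and optimizing differently (choosing $\lambda\sim \ln(c/b)/(t-r)$ rather than balancing the two terms). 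Since your proof relies on this stronger form, you may want to note this dependence explicitly.
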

\begin{proof}
	The proof follows from \cite[Corollary 4.1]{BM23}.
\end{proof}
We are now in a position to prove the observability inequality stated in \Cref{observ}. The main idea follows the classical Lebeau--Robbiano method \cite{LR}, which derives observability inequalities from spectral estimates. However, since our setting differs from the original framework, we must adapt the method to the case of measurable observation regions. The detailed analysis in similar contexts can be found in the proofs of \cite[Theorem 1.3]{GLMO25} and \cite[Theorem 3]{BM23}. For the sake of completeness, we present the  proof here.

\subsubsection{Proof of \Cref{observ}}
\begin{proof}
	Thanks to Corollary \ref{time_estimate}, there exist constants 
	$A > 0$, $C > 0$ such that for all $0 < t_1 < t_2 \le T$ and for all 
	$u_0 \in L^2(\TT)$, the corresponding solution $u$ of \eqref{eq_lin} with $f=0$ satisfies
	\begin{equation}\label{est9}
		e^{-\frac{A}{t_2 - t_1}} \|u(t_2,\cdot)\|_{L^2(\TT)}
		- e^{-\frac{2A}{t_2 - t_1}} \|u(t_1,\cdot)\|_{L^2(\TT)}
		\le C \int_{t_1}^{t_2} 
		\|u(r,\cdot)\|_{L^1(\omega_1)} dr.
	\end{equation}
	Let us define
	\begin{equation*}
		l = \frac{T}{2}, 
		\qquad l_1 = \frac{3T}{4}, 
		\qquad l_{n+1} - l = \frac{(l_1 - l)}{2^{n}}, \quad n \ge 1.
	\end{equation*}
	We apply \eqref{est9} with $t_1 = l_{n+1}$ and $t_2 = l_n$ and obtain
	\begin{equation}\label{obs_est1}
		e^{-\frac{A}{l_n - l_{n+1}}} \|u(l_n,\cdot)\|_{L^2(\TT)}
		- e^{-\frac{2A}{l_n - l_{n+1}}} \|u(l_{n+1},\cdot)\|_{L^2(\TT)}
		\le C \int_{l_{n+1}}^{l_n} 
		\|u(r,\cdot)\|_{L^1(\omega_1)} dr.
	\end{equation}
	Note that
	$\frac{2}{(l_n - l_{n+1})} = \frac{1}{\,l_{n+1} - l_{n+2}\,}.$
	Thus inequality \eqref{obs_est1} reads as
	\begin{equation}\label{est10}
		e^{-\frac{A}{l_n - l_{n+1}}} \|u(l_n,\cdot)\|_{L^2(\TT)}
		- e^{-\frac{A}{l_{n+1} - l_{n+2}}} \|u(l_{n+1},\cdot)\|_{L^2(\TT)}
		\le C \int_{l_{n+1}}^{l_n} 
		\|u(r,\cdot)\|_{L^1(\omega_1)} \, dr.
	\end{equation}
	Summing the telescopic series \eqref{est10} and noting the fact 
	$\displaystyle \lim_{n \to +\infty} e^{-A/(l_{n+1} - l_{n+2})} = 0$, 
	we deduce
	\begin{equation*}
		e^{-\frac{A}{l_1 - l_2}} \|u(l_1,\cdot)\|_{L^2(\TT)} 
		\le C \int_{l}^{l_1} 
		\|u(r,\cdot)\|_{L^1(\omega_1)} \, dr.
	\end{equation*}
	Using the definitions of $l$, $l_1$, and $l_2$, it follows that
	\begin{equation}\label{eq:final}
		\|u(l_1,\cdot)\|_{L^2(\TT)} 
		\le C e^{\frac{C}{T}} 
		\int_0^T 
		\|u(r,\cdot)\|_{L^1(\omega_1)} \, dr.
	\end{equation}
	Combining \eqref{eq_dis1} with \eqref{eq:final}, we can complete the proof of \Cref{observ}--\Cref{point-1}. The proof of \Cref{observ}--\Cref{point-2} can be done in a similar manner.
	\end{proof}
	
\subsection{Source term method with $L^\infty$ control and local null controllability}
In this section, we apply the source term method introduced by Liu, Takahashi, and Tucsnak \cite{Tucsnak_nonlinear} to establish the local null controllability result \Cref{res_non} for the nonlinear system \eqref{eq_nonlin}. The proof relies on the null controllability results \Cref{control}–\Cref{control1} for the associated linearized system, together with suitable estimates of the control cost. By \Cref{control}, we have an $L^\infty$ estimate for the control cost. We then fix $M > 0$ and redefine the control cost as $M e^{M/T}.$ 

Let $q \in (1, \sqrt{2})$ and $p > \dfrac{q^2}{2 - q^2}$. 
We define the weights
\begin{align*}
	\rho_0(t) 
\notag	&:=  \exp\left(-\frac{Mp}{(q - 1)(T - t)}\right),\\
	\rho_{\mathcal S}(t) 
	&:= 
	\exp{\left(-\frac{(1+p)q^2 M}{ (q - 1)(T - t)}\right)}.
\end{align*}
The assumption 
$p > \frac{q^2}{2 - q^2}$ implies 
$2p > (1 + p) q^2$ which further
implies that
\begin{equation*}
	\frac{\rho_0^2}{\rho_{\mathcal S}} \in C([0,T]).
\end{equation*}
We then define the associated spaces for the source term, the state, and the control as follows:
\begin{align*}
	\mathcal{S} 
	&:= \Bigl\{
	S \in L^2\big(0,T; H^{-2}({\TT})\big) 
	\;\Big|\;
	\tfrac{S}{\rho_{\mathcal S}} \in L^2\big(0,T; H^{-2}({\TT})\big)
	\Bigr\}, \\
	\mathcal{Z}
	&:= \Bigl\{
	z \in C([0,T];L^2({\TT}))\cap L^2\big(0,T; H^2({\TT})\big) 
	\;\Big|\;
	\tfrac{z}{\rho_0} \in C([0,T];L^2({\TT}))\cap L^2\big(0,T; H^2({\TT})\big)
	\Bigr\},  \\
	\mathcal{H}
	&:= \Bigl\{
	f \in L^\infty\big((0,T)\times \omega )\big) 
	\;\Big|\;
	\tfrac{f}{\rho_0} \in L^\infty\big(0,T; L^\infty(\omega) \big)
	\Bigr\}. 
\end{align*}
Consider the equation
\begin{equation}\label{eq_lin1}
	\begin{cases}
		u_t +\Delta^2 u  +\Delta u=S+f\mathbf{1}_{\omega} & \text{in }  (0, T) \times \TT , \\
		u(0) = u_0 & \text{in } \TT.
	\end{cases}
\end{equation}
The following result is an adaptation of the work \cite{Tucsnak_nonlinear} in the $L^\infty$ control setting, as done in \cite{LB20}. For the sake of completeness, we include the proof.
\begin{prop}\label{lin}
	For every $S \in \mathcal{S}$ and $u_0 \in L^2(\TT)$, there exists 
	$f \in \mathcal{H}$ such that the solution $u$ of 
	satisfies $u \in \mathcal{Z}$. Furthermore, there exists a constant $C > 0$, 
	independent of $S$ and $u_0$, such that
	\begin{equation}\label{est12}
		\|u / \rho_0\|_{C([0,T]; L^2(\TT))}  
		+ \|f\|_{\mathcal{H}} 
		\le C \left(
		\|u_0\|_{L^2} 
		+ \|S\|_{\mathcal{S}}
		\right).
	\end{equation}
	In particular, since $\rho_0$ is a continuous function satisfying $\rho_0(T) = 0$, 
	the estimate~\eqref{est12} implies that
	\begin{equation*}
	u(T,\cdot) = 0.
	\end{equation*}
\end{prop}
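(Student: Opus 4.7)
The plan is to use the source term method of Liu--Takahashi--Tucsnak, adapted to the $L^\infty$ control setting as in \cite{LB20}. The two key inputs are the linear null-controllability result \Cref{control} together with its cost estimate \eqref{controlcost}; the weights $\rho_0$ and $\rho_{\mathcal S}$ have been designed precisely so that the iterative construction below converges.

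First, I would fix a geometric partition $a_k := T(1-q^{-k})$ for $k\ge 0$, so that $a_0=0$, $a_k\uparrow T$, and $\ell_k := a_{k+1}-a_k = T(q-1)q^{-k-1}$. On each subinterval $[a_k,a_{k+1}]$ I decompose the sought solution as $u = v_k + w_k$, where $w_k$ absorbs the source and $v_k$ carries the control:
\begin{align*}
(w_k)_t + \Delta^2 w_k + \Delta w_k &= S \text{ on } (a_k,a_{k+1}), \quad w_k(a_k)=0,\\
(v_k)_t + \Delta^2 v_k + \Delta v_k &= f_k\mathbf{1}_\omega \text{ on } (a_k,a_{k+1}), \quad v_k(a_k)=\xi_k, \ v_k(a_{k+1})=0,
\end{align*}
with the convention $\xi_0 := u_0$ and $\xi_k := w_{k-1}(a_k)$ for $k\ge 1$. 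The existence of a control $f_k\in L^\infty((a_k,a_{k+1})\times\omega)$ steering $\xi_k$ to zero at $a_{k+1}$ is provided by \Cref{control}, together with the cost bound
\begin{equation*}
\|f_k\|_{L^\infty((a_k,a_{k+1})\times\omega)} \le M\,e^{M/\ell_k}\,\|\xi_k\|_{L^2}.
\end{equation*}
Gluing the pieces produces a candidate pair $u := \sum_k (v_k+w_k)\mathbf{1}_{[a_k,a_{k+1}]}$ and $f := \sum_k f_k\mathbf{1}_{[a_k,a_{k+1}]}$. Continuity across the junctions is automatic, since at $t=a_{k+1}$ both sides equal $w_k(a_{k+1})=\xi_{k+1}$.

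The heart of the argument is the weight balance. Since $T-a_k = Tq^{-k}$ and $1/\ell_k = q^{k+1}/(T(q-1))$, the control cost $e^{M/\ell_k}$ grows like $\exp(Mq^{k+1}/(T(q-1)))$, whereas $\rho_0(a_{k+1})^{-1}$ grows like $\exp(Mpq^{k+1}/(T(q-1)))$. The assumption $p>q^2/(2-q^2)$ with $q\in(1,\sqrt{2})$ implies in particular $p\ge 1$, so that $e^{M/\ell_k}/\rho_0(a_{k+1})^{-1}$ remains bounded and $\|f_k/\rho_0\|_{L^\infty}$ is geometrically summable. Iterating, by \Cref{well_source} one gets $\|\xi_{k+1}\|_{L^2}=\|w_k(a_{k+1})\|_{L^2}\le C\|S\|_{L^2(a_k,a_{k+1};H^{-2})}$, and the latter is controlled by $\rho_{\mathcal S}$ times $\|S/\rho_{\mathcal S}\|_{\mathcal S}$. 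The design property $\rho_0^2/\rho_{\mathcal S}\in C([0,T])$ (equivalent to $2p\ge (1+p)q^2$) is exactly what is needed to absorb the product of these two contributions into $\|u/\rho_0\|_{C([0,T];L^2)\cap L^2(0,T;H^2)}$, so summing over $k$ delivers the estimate \eqref{est12}. The bound $u/\rho_0\in C([0,T];L^2)$ together with $\rho_0(T)=0$ then forces $u(T,\cdot)=0$.

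The principal difficulty is this weight bookkeeping: three exponential rates---the control cost $e^{M/\ell_k}$, the state weight $\rho_0$, and the source weight $\rho_{\mathcal S}$---must simultaneously be compatible with the geometric partition. The specific ranges $q\in(1,\sqrt{2})$ and $p>q^2/(2-q^2)$ are precisely the constraints that make all the resulting geometric series converge while preserving the continuity of $\rho_0^2/\rho_{\mathcal S}$ on $[0,T]$, and a secondary technical point will be verifying that the $L^\infty$ norm of the controls assembled from \Cref{control} patches together into a globally $L^\infty$ function rather than merely an $L^2$ one.
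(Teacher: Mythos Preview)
Your proposal is correct and follows essentially the same construction as the paper: the geometric partition $a_k=T(1-q^{-k})$ coincides with the paper's $T_k$, your decomposition $u=v_k+w_k$ matches the paper's $\tilde u+\hat u$, and your $\xi_k$ is the paper's $a_k$. The only refinement worth noting is that for the $L^\infty$-in-time control bound one takes the supremum over the disjoint subintervals rather than summing a geometric series (the paper writes this explicitly), and the paper exploits the exact identity $e^{M/(T_{k+2}-T_{k+1})}\rho_{\mathcal S}(T_k)=\rho_0(T_{k+2})$ to make the weight bookkeeping clean; both points you already flag as secondary.
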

\begin{proof}
	For $k \ge 0$, we define $T_k = T(1 - \frac{1}{q^{k}})$. Let us denote $a_0 = u_0$ and, for
	$k \ge 0$, we define $a_{k+1} = \hat u(T_{k+1}^-, \cdot),$ where $\hat u$ is the solution to
\begin{equation*}
	\begin{cases}
	\hat	u_t +\Delta^2 \hat u  +\Delta \hat u=S & \text{in }  (T_k, T_{k+1}) \times \TT , \\
		\hat u(T_k^+) = 0 & \text{in } \TT.
	\end{cases}
\end{equation*}
	From \Cref{well_source}, we have
	\begin{equation}\label{ak}
		\norm{a_{k+1}}_{L^2} \le 
		\|\hat u\|_{C([T_k,T_{k+1}];L^2(\TT)}
		\le C \|S\|_{L^2(T_k,T_{k+1};H^{-2}(\TT))}.
	\end{equation}
	On the other hand, for $k \ge 0$, we also consider the control systems
\begin{equation*}
	\begin{cases}
		\tilde u_t +\Delta^2 \tilde u  +\Delta \tilde u=\mathbf{1}_{\omega} f & \text{in }  (T_k, T_{k+1}) \times \TT , \\
		\tilde u(T_k^+) = a_k & \text{in } \TT.
	\end{cases}
\end{equation*}
	Using \Cref{control}, we get the existence of  control
	$f_k \in L^\infty(T_k,T_{k+1};L^\infty(\TT))$
	such that $\tilde u(T_{k+1}^-,\cdot) = 0$ and, thanks to the control cost estimate \eqref{controlcost} (recalling that we redefine the constant as $ M e^{M/T}$),
	\begin{equation}\label{fk2}
		\|f_k\|_{L^\infty(T_k,T_{k+1};L^\infty(\TT))} 
		\le M e^{M/(T_{k+1}-T_k)} \|a_k\|_{L^2}.
	\end{equation}
	In particular, for $k = 0$, we have
	\begin{equation*}
		\|f_0\|_{L^\infty(T_0,T_1;L^\infty(\TT))} 
		\le M e^{qM/T(q-1)} \|u_0\|_{L^2}.
	\end{equation*}
	And, since $\rho_0$ is decreasing,
	\begin{equation}\label{fzero}
		\left\|\frac{f_0}{\rho_0}\right\|_{L^\infty(T_0,T_1;L^\infty(\TT))}  \le 
		\rho_0^{-1}\left(\frac{T(q-1)}{q}\right) M e^{qM/T(q-1)} \|u_0\|_{L^2}.
	\end{equation}
	For $k \ge 0$, since $\rho_S$ is decreasing, combining \eqref{ak} and \eqref{fk2} yields
	\begin{equation*}
		\|f_{k+1}\|_{L^\infty(T_{k+1},T_{k+2};L^\infty(\TT))}
		\le C M e^{M/(T_{k+2}-T_{k+1})} \rho_{\mathcal S}(T_k)
		\left\|\frac{S}{\rho_{ \mathcal S}}\right\|_{L^2(T_k,T_{k+1};H^{-2}(\TT))}.
	\end{equation*}
	In particular, by using $e^{M/(T_{k+2}-T_{k+1})}\rho_S(T_k) = \rho_0(T_{k+2})$, we get
	\begin{equation}\label{fk1}
		\|f_{k+1}\|_{L^\infty(T_{k+1},T_{k+2};L^\infty(\TT))}
		\le C \rho_0(T_{k+2}) 
		\left\|\frac{S}{\rho_{\mathcal S}}\right\|_{L^2(T_k,T_{k+1};H^{-2}(\TT))}.
	\end{equation}
	Then, from \eqref{fk1}, by using the fact that $\rho_0$ is decreasing,
	\begin{equation}\label{fk}
		\left\|\frac{f_{k+1}}{\rho_0}\right\|_{L^\infty(T_{k+1},T_{k+2};L^\infty(\TT))}  \le 
		C 	\left\|\frac{S}{\rho_{\mathcal S}}\right\|_{L^2(T_k,T_{k+1};H^{-2}(\TT))}.
	\end{equation}
	We can glue the controls $f_k$ together for $k \ge 0$ by defining
	\begin{equation*}
	f := \sum_{k \ge 0} \mathbf{1}_{(T_k,T_{k+1})}f_k.
	\end{equation*}
	We have the estimate from \eqref{fzero} and \eqref{fk}, and using the fact that the essential supremum over disjoint intervals equals the supremum
	of the individual norms, we get
	\begin{align*}
	\notag	\left\|\frac{f}{\rho_0}\right\|^2_{L^\infty(0,T;L^\infty(\TT))}&=\sup_{k\in \mathbb{N}}\norm{\frac{f_{k}}{\rho_0}}^2_{L^\infty(T_{k},T_{k+1};L^\infty(\TT))} \\
	\notag	&
		\le C^2 \rho_0^{-2}(T_1) M^2 e^{2qM/T(q-1)} \|u_0\|^2_{L^2}+\sum_{k\in\mathbb N} \left\|\frac{S}{\rho_{\mathcal S}}\right\|^2_{L^2(T_k,T_{k+1};H^{-2}(\TT))}\\
		&\le C^2 \|S/\rho_{\mathcal S}\|^2_{L^2(0,T;H^{-2}(\TT))}
		+ C^2 \rho_0^{-2}(T_1) M^2 e^{2qM/T(q-1)} \|u_0\|^2_{L^2}.
	\end{align*}
The full state $u$ is obtained by joining $\hat{u}$ and $\tilde{u}$, which match continuously at each
junction time $T_k$ by construction. We then proceed to estimate the state. On every interval $(T_k, T_{k+1})$,
we apply the energy estimate given in \Cref{well_source} and deduce 
	\begin{align*}
		\|\hat u\|_{C([T_k,T_{k+1}];L^2(\TT))} &\le C \|S\|_{L^2(T_k,T_{k+1}; H^{-2}(\TT))},\\
		\|\tilde u\|_{C([T_k,T_{k+1}];L^2(\TT))} 
		&\le C \|a_k\|_{L^2} 
		+ C \|f_k\|_{L^\infty(T_k,T_{k+1};L^\infty(\TT))}.
	\end{align*}
	Proceeding similarly as for the estimate on the control, for all $k\geq 0,$ we obtain respectively
	\begin{align*}
	\|\hat u/\rho_0\|_{C([T_k,T_{k+1}];L^2(\TT))}
	&\le C  \|S/\rho_{\mathcal S}\|_{L^2(0,T; H^{-2}(\TT))}\\
		\|\tilde u/\rho_0\|_{C([T_k,T_{k+1}];L^2(\TT))}
		&\le C  \|S/\rho_{\mathcal S}\|_{L^2(0,T; H^{-2}(\TT))}
		+ C \rho_0^{-1}(T_1) M e^{qM/T(q-1)} \|u_0\|_{L^2}. 
	\end{align*}
	Therefore, for an appropriate choice of constant $C > 0$, $u$ and $f$ satisfy \eqref{eq_lin1} and \eqref{est12}. 
	This concludes the proof of \Cref{lin}.
	\end{proof}
Let us restate the previous result (\Cref{lin})  in a more regular setting so that we may apply it to the nonlinear case in dimension 
$3$. Let $u_0\in H^2(\TT)$. Thanks to \Cref{control}, we have the existence of a control $f\in L^\infty\left(0,T;L^\infty(\TT)\right)$ supported in $(0,T)\times \omega$, 
such that the  solution $u$ of \eqref{eq_lin} satisfies $u(T,\cdot) = 0$
and the control satisfies the cost estimate \eqref{controlcost}. Using this fact together with Lemma \ref{est_semigroup}, we can employ a similar technique as done in \Cref{lin}, to get controllability of the linear problem \eqref{eq_lin1} in a weighted space.
For that we denote 
\begin{align*}&\mathcal Y=\Bigl\{
	z \in C([0,T];H^2({\TT}))\cap L^2\big(0,T; H^4({\TT})\big) 
	\;\Big|\;
	\tfrac{z}{\rho_0} \in C([0,T];H^2({\TT}))\cap L^2\big(0,T; H^4({\TT})\big)
	\Bigr\}.\\
	& \mathcal W=\Bigl\{
	S \in L^2\big(0,T; L^{2}({\TT})\big) 
	\;\Big|\;
	\tfrac{S}{\rho_{\mathcal S}} \in L^2\big(0,T; L^{2}({\TT})\big)
	\Bigr\}. \end{align*}
\begin{prop}\label{lin1}
	For every $S \in \mathcal{W}$ and $u_0 \in H^2(\TT)$, there exists 
	$f \in \mathcal{H}$ such that the solution $u$ of 
	satisfies $u \in \mathcal{Y}$. Furthermore, there exists a constant $C > 0$, 
	independent of $S$ and $u_0$, such that
	\begin{equation}\label{est14}
		\|u / \rho_0\|_{C([0,T]; H^2(\TT))}  
		+ \|f\|_{\mathcal{H}} 
		\le C \left(
		\|u_0\|_{H^2} 
		+ \|S\|_{\mathcal{W}}
		\right).
	\end{equation}
	In particular, since $\rho_0$ is a continuous function satisfying $\rho_0(T) = 0$, 
the estimate~\eqref{est14} implies that
\begin{equation*}
	u(T,\cdot) = 0.
\end{equation*}
\end{prop}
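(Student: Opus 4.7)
The plan is to follow, step by step, the iterative construction used in the proof of Proposition~\ref{lin}, but measured in $H^2$ rather than $L^2$. Define $T_k=T(1-q^{-k})$ as before, set $a_0=u_0\in H^2(\TT)$, and on each interval $(T_k,T_{k+1})$ split the trajectory into $u=\hat u+\tilde u$, where $\hat u$ solves the inhomogeneous free equation with source $S$ and zero initial data at $T_k^+$, and $\tilde u$ solves the controlled equation with initial data $a_k=\hat u(T_k^-)+\tilde u(T_k^-)$ at $T_k^+$ (with $a_0=u_0$) and is driven to $0$ at $T_{k+1}^-$ by the control $f_k$ produced by Proposition~\ref{control}. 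The candidate control is then $f=\sum_{k\ge 0}\mathbf 1_{(T_k,T_{k+1})}f_k$, and the full state is the continuous concatenation of the pieces.

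The only new point compared with Proposition~\ref{lin} is that all estimates must be upgraded from $L^2$ to $H^2$. First, for $\hat u$, since $S\in L^2(T_k,T_{k+1};L^2(\TT))$ and $\hat u(T_k^+)=0\in H^2(\TT)$, standard maximal regularity for the fourth-order parabolic operator $\partial_t+\Delta^2+\Delta$ on $\TT$ gives
\[
\|\hat u\|_{C([T_k,T_{k+1}];H^2)}+\|\hat u\|_{L^2(T_k,T_{k+1};H^4)}\le C\,\|S\|_{L^2(T_k,T_{k+1};L^2)},
\]
which in particular controls $\|a_{k+1}\|_{H^2}$. Second, for $\tilde u$, the control $f_k$ belongs to $L^\infty\subset L^2$ and $a_k\in H^2$, so the same maximal regularity (together with the $L^\infty$ cost estimate \eqref{controlcost} and Lemma~\ref{est_semigroup} to pass from the $L^2$-level control bound to an $H^2$-level state bound) yields
\[
\|\tilde u\|_{C([T_k,T_{k+1}];H^2)}+\|\tilde u\|_{L^2(T_k,T_{k+1};H^4)}\le C\bigl(\|a_k\|_{H^2}+\|f_k\|_{L^\infty(T_k,T_{k+1};L^\infty)}\bigr),
\]
with
\[
\|f_k\|_{L^\infty(T_k,T_{k+1};L^\infty)}\le M\exp\!\left(\tfrac{M}{T_{k+1}-T_k}\right)\|a_k\|_{L^2}\le M\exp\!\left(\tfrac{M}{T_{k+1}-T_k}\right)\|a_k\|_{H^2}.
\]

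With these two ingredients, the weighted summation argument of Proposition~\ref{lin} applies verbatim. Indeed, for $k\ge 1$ one has $\|a_k\|_{H^2}\le C\|S\|_{L^2(T_{k-1},T_k;L^2)}$, and combining this with the exponential cost yields, thanks to the identity $e^{M/(T_{k+2}-T_{k+1})}\rho_{\mathcal S}(T_k)=\rho_0(T_{k+2})$ that underlies the choice of weights, the bounds
\[
\left\|\frac{f_{k+1}}{\rho_0}\right\|_{L^\infty(T_{k+1},T_{k+2};L^\infty)}\le C\left\|\frac{S}{\rho_{\mathcal S}}\right\|_{L^2(T_k,T_{k+1};L^2)},
\qquad
\left\|\frac{f_0}{\rho_0}\right\|_{L^\infty(T_0,T_1;L^\infty)}\le C\|u_0\|_{H^2}.
\]
Since the intervals $(T_k,T_{k+1})$ are disjoint, taking the supremum over $k$ gives $f\in\mathcal H$ with the desired bound, and analogous weighted estimates on $\hat u/\rho_0$ and $\tilde u/\rho_0$ in $C([T_k,T_{k+1}];H^2)\cap L^2(T_k,T_{k+1};H^4)$ piece together to give $u\in\mathcal Y$ and \eqref{est14}. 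The decay of $\rho_0$ at $t=T$ finally forces $u(T,\cdot)=0$.

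The main obstacle is making sure that the regularity upgrade from $L^2$ to $H^2$ is uniform across the infinitely many subintervals $(T_k,T_{k+1})$, whose lengths shrink like $q^{-k}$. The constants in the parabolic regularity estimate on each slab must be independent of $k$; this is guaranteed by the translation invariance in time of the operator and by Lemma~\ref{est_semigroup}, but it must be invoked carefully so that no factor depending on $T_{k+1}-T_k$ creeps in outside of the controlled exponential appearing in \eqref{controlcost}. Once this uniformity is in place, the remainder of the argument is a direct transcription of the proof of Proposition~\ref{lin}.
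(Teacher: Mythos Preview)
Your proposal is correct and follows essentially the same approach as the paper, which merely remarks that the proof of Proposition~\ref{lin} carries over using the $X_T^2$ estimates of Lemma~\ref{est_semigroup} (applied with $k=2$) in place of the $X_T^0$ estimate of Proposition~\ref{well_source}. Your explicit identification of the one point requiring care---that the parabolic regularity constants on each slab $(T_k,T_{k+1})$ must be uniform in $k$---is well placed, and indeed Lemma~\ref{est_semigroup} provides exactly this uniformity since its constant is of the form $e^{CT}$ with $C$ independent of the interval length.
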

\subsubsection{Proof of \Cref{res_non}--\Cref{point-1}}
Here we restrict ourselves $d\in\{1, 2\}.$
Let us consider the map 
$\mathcal{N} : S \in \mathcal{S} \longmapsto \Delta(u^3) \in \mathcal{S},$
where $(u,f)$ is the solution of \eqref{eq_lin1}.
We show that $\mathcal{N}$ is well-defined and that there exists $R > 0$ such that 
$\mathcal{N}\big(B(0,R)\big) \subset B(0,R),$
where $B(0,R)$ denotes the closed ball in $\mathcal{S}$ of radius~$R$.
First, we invoke Gagliardo–Nirenberg inequality 
$\| u \|_{L^\infty} \leq C \| u \|_{L^2}^{1/2} \| u \|_{H^2}^{1/2}$, and estimate the following
$$
\|u\|^6_{L^6}\leq C \| u \|_{L^\infty}^4\| u \|_{L^2}^{2}
\le C\,\|u\|_{L^2}^{4}
\|u\|_{H^2}^{2}.
$$
We further deduce that
\begin{align*}
	\|\mathcal{N}(S)\|_{L^2(0,T;H^{-2}(\mathbb{T}^d))} 
	\le C \|u^3\|_{L^2(0,T;L^2(\mathbb{T}^d))}
	\le C \|u\|^2_{ C([0,T];L^2({\mathbb{T}^d}))} \|u\|_{L^2(0,T; H^2({\mathbb{T}^d}))}.
\end{align*}
Therefore, we obtain
\begin{align*}
&\|\mathcal N(S)\|_{\mathcal{S}} 
\le C \Big( 
\|u_0\|_{L^2({\mathbb{T}^2})}^3 
+ \|S\|_{\mathcal{S}}^3
\Big),\\
&\|\mathcal N(S_1)-\mathcal N(S_2)\|_{\mathcal{S}} 
\le C \Big( \norm{u_0}^2_{L^2}+\norm{S_1}^2_{\mathcal S}+\norm{S_2}^2_{\mathcal S}
\Big)\|S_1-S_2\|_{\mathcal{S}}.
\end{align*}
Using these estimates, we can apply the Banach fixed-point theorem to conclude that 
$\mathcal{N}$ has a fixed point. Hence, we complete the proof of \Cref{res_non}.
\subsubsection{Proof of \Cref{res_non}--\Cref{point-2}}
\begin{proof}
Let us restrict ourselves $d=3.$
We consider the map 
$\mathcal{N} : S \in \mathcal{W} \longmapsto \Delta(u^3) \in \mathcal{W},$
where $(u,f)$ is the solution of \eqref{eq_lin1}.
Using the fact that $H^2(\mathbb T^{3})$ is an algebra, we  deduce
\begin{align*}
	\|\mathcal{N}(S)\|_{L^2(0,T;L^{2}(\mathbb{T}^3))} 
	\le C \|u^3\|_{L^2(0,T;H^2(\mathbb{T}^3))}
	\le C \sqrt{T}\|u\|^3_{ C([0,T];H^2({\mathbb{T}^3}))} .
\end{align*}
Therefore, we finally obtain
\begin{align*}
&	\|\mathcal N(S)\|_{\mathcal{W}} 
	\le C \Big( 
	\|u_0\|_{H^2}^3 
	+ \|S\|_{\mathcal{W}}^3
	\Big),\\
	&\|\mathcal N(S_1)-\mathcal N(S_2)\|_{\mathcal{W}} 
	\le C \Big( \norm{u_0}^2_{H^2}+\norm{S_1}^2_{\mathcal W}+\norm{S_2}^2_{\mathcal W}
	\Big)\|S_1-S_2\|_{\mathcal{W}}.
\end{align*}
Using these estimates, we can apply the Banach fixed-point theorem to conclude that 
$\mathcal{N}$ has a fixed point. Hence, we complete the proof of \Cref{res_non}.
\end{proof}
\subsection{Proof of Theorem \ref{global_null}}
\begin{proof}
Let us take $d\in \{1, 2, 3\}$ and $T>0$. Assume $u_0\in L^2(\TT).$ Thanks to \Cref{global_wellposed}, equation \eqref{eq_main} with $\eta=0$ possesses a unique solution $u \in C([0,T];L^2(\TT))\cap L^2(0,T;H^2(\TT)).$ Thus there always exits $0<\epsilon<T$ small enough such that $u(\epsilon,\cdot)\in H^2(\TT).$ Let us divide the interval $[0,T]$ in three sub-intervals $[0,\epsilon],$ $[\epsilon,\delta]$ and $[\delta, T],$ where $\epsilon<\delta<T$. We consider the size $R>0$ of the ball of \Cref{res_non} associated with $\delta$ and $T$. As $u(\epsilon)\in H^2(\TT),$  we can use Theorem \ref{small_time}. Thus for $R>0$, there exists a control $\eta\in L^{\infty}(\epsilon,\delta;\mathscr{H}_0)$ such that $\norm{u(\delta)}_{H^2}<R.$ Next, we will apply \Cref{res_non} to conclude the global null controllability.
\end{proof}
\subsection{Proof of Theorem \ref{global_nulld2}}
This proof follows a similar approach as Theorem \ref{global_null}. Thus we just need to take care of the third step, that is, the corresponding local controllability result:
\begin{prop}\label{local2}
	Let $d\in \{1,2\}$, $T>0$, $m>0$, there exists $R>0$ and $s\in (0,1)$ such that for any $u_0\in L^2(\mathbb{T}^d)$ satisfying $\norm{u_0}_{L^2}\leq R$ and for any measurable set $\omega\subset \TT$ satisfying $\mathcal{C}^{d-s}_{\mathcal H}(\omega)>m$, there is $f \in L^\infty\left(0,T; H^{-2}(\TT) \right)$ such that system \eqref{eq_nonlin} satisfies $ u(T,\cdot) = 0$ in $\mathbb{T}^d.$
\end{prop}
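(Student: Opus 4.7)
The plan is to follow the proof of \Cref{res_non}--\Cref{point-1} verbatim, simply replacing the $L^\infty$-valued control cost estimate \Cref{control} by the measure-valued analog \Cref{control1}, and then closing the fixed-point argument in the same weighted spaces. The crucial observation that makes this substitution painless in dimensions $d\in\{1,2\}$ is the continuous embedding $\mathcal M \hookrightarrow H^{-s}(\TT)$ for any $s>d/2$; since $d/2<2$, we obtain in particular
$$L^\infty(0,T;\mathcal M) \hookrightarrow L^\infty(0,T;H^{-2}(\TT)) \hookrightarrow L^2(0,T;H^{-2}(\TT)),$$
so that a measure-valued control fits in the same functional framework as the $H^{-2}$-source appearing in \Cref{well_source}, and the linear Cahn--Hilliard dynamics still delivers a state in $\mathcal Z = C([0,T];L^2(\TT))\cap L^2(0,T;H^2(\TT))$.

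First I would prove the analog of \Cref{lin} with exactly the same weights $\rho_0$ and $\rho_{\mathcal S}$, the same source space $\mathcal S$ and state space $\mathcal Z$, but with the control space replaced by
$$\mathcal H' := \Bigl\{\,f \in L^\infty(0,T;\mathcal M)\ \text{supported in}\ (0,T)\times\omega\;:\; f/\rho_0 \in L^\infty(0,T;\mathcal M)\,\Bigr\}.$$
The telescoping construction on the times $T_k = T(1-q^{-k})$ is carried out identically: on each interval $(T_k,T_{k+1})$ we split $u = \hat u + \tilde u$, use \Cref{well_source} to estimate the free contribution $\hat u$, and use \Cref{control1} instead of \Cref{control} to obtain a measure-valued control $f_k$ driving $\tilde u$ to zero at $T_{k+1}$ with cost $Me^{M/(T_{k+1}-T_k)}\|a_k\|_{L^2}$. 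Summing yields the weighted estimate
$$\|u/\rho_0\|_{C([0,T];L^2)} + \|f\|_{\mathcal H'} \;\leq\; C\bigl(\|u_0\|_{L^2} + \|S\|_{\mathcal S}\bigr),$$
where the embedding $\mathcal M \hookrightarrow H^{-2}$ guarantees that the parabolic energy estimates for $\tilde u$ are unchanged.

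Next I would close the nonlinearity exactly as for \Cref{res_non}--\Cref{point-1}. Define the map $\mathcal N : \mathcal S \to \mathcal S$ by $\mathcal N(S) := \Delta(u^3)$, where $(u,f)$ is the linear solution constructed in the previous step. For $d\in\{1,2\}$, the Gagliardo--Nirenberg inequality $\|u\|_{L^\infty}\leq C\|u\|_{L^2}^{1/2}\|u\|_{H^2}^{1/2}$ gives
$$\|u^3\|_{L^2(0,T;L^2)} \;\leq\; C\,\|u\|_{C([0,T];L^2)}^{2}\,\|u\|_{L^2(0,T;H^2)},$$
which combined with the weighted linear estimate produces the self-map inclusion $\mathcal N(B_{\mathcal S}(0,R))\subset B_{\mathcal S}(0,R)$ and the contraction property on $B_{\mathcal S}(0,R)$, for $R>0$ and $\|u_0\|_{L^2}$ sufficiently small. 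Banach's fixed-point theorem then delivers $S^\star\in\mathcal S$ with $S^\star = \Delta((u^\star)^3)$, and the corresponding $u^\star$ solves the nonlinear system with $u^\star(T,\cdot)=0$, while the associated control $f^\star$ lies in $L^\infty(0,T;\mathcal M)\subset L^\infty(0,T;H^{-2}(\TT))$ and is supported in $(0,T)\times\omega$.

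The main (and mild) obstacle is purely bookkeeping: one must check that every norm appearing in the proof of \Cref{lin} admits its measure-valued counterpart via $\mathcal M\hookrightarrow H^{-2}(\TT)$, and that the $L^\infty$-in-time structure of the control survives the gluing step. No new analytic ingredient is needed beyond what is already developed in the paper; the observability side is handled by \Cref{observ}--\Cref{point_2} and its control counterpart \Cref{control1}, and the fixed-point side is identical to the Lebesgue-measure case, so the restriction $d\in\{1,2\}$ enters only through the Gagliardo--Nirenberg step and through the requirement $d/2<2$ used for the embedding of $\mathcal M$ into $H^{-2}$.
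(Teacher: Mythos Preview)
Your proposal is correct and follows essentially the same approach as the paper. Both arguments hinge on the embedding $\mathcal M \hookrightarrow H^{-2}(\TT)$ valid for $d\in\{1,2\}$ (since $d/2<2$), which allows the measure-valued control from \Cref{control1} to be used in place of the $L^\infty$-control from \Cref{control} within the source-term framework of \Cref{lin}, after which the fixed-point argument of \Cref{res_non}--\Cref{point-1} goes through verbatim.
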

\begin{proof}
Thanks to \Cref{control1}, 	we have the existence of a control $f\in L^\infty(0,T;\mathcal M)$ supported in $(0,T)\times \omega$, 
such that the  solution $u$ of \eqref{eq_lin} satisfies $u(T,\cdot) = 0$
and the control satisfies the cost estimate \eqref{controlcost1}. Here we will show that for almost every $t \in (0,T),$ the control belongs to a negative-order Sobolev space in the space variable, more precisely $f\in  L^\infty(0,T;H^{-2}(\TT)).$ 
Note that for $k>\tfrac d2$, the embedding $H^k(\mathbb{T}^d)\hookrightarrow C(\mathbb{T}^d)$ holds. Thus there exists $C>0$ depending only on $d$ and $k$ such that $\|\phi\|_{L^\infty} \le C \|\phi\|_{H^k},$ for all $\phi \in H^k(\TT).$
 Let $\mu \in \mathcal M$ be a Borel measure on $\mathbb{T}^d$, and define
\begin{equation*}
	L_\mu(\phi)=\int_{\mathbb{T}^d} \phi\, d\mu , \qquad \phi\in H^k(\TT).
\end{equation*}
A straightforward computation shows that
\begin{align*}
\sup_{\phi\in H^k(\TT) \atop \norm{\phi}_{H^k}\leq 1}\left|\int_{\TT} \phi d\mu\right|\leq C \sup_{\phi\in C(\TT) \atop \norm{\phi}_{L^\infty}\leq 1}\left|\int_{\TT} \phi d\mu\right|=C\norm{\mu}.
\end{align*}
Thus $L_\mu$ defines a continuous functional on $H^k(\mathbb{T}^d)$, and
\begin{equation}\label{norm}
	\|L_\mu\|_{H^{-k}(\TT)} \le C \|\mu\|.
\end{equation} 
By definition, $L_{\mu}$ is indeed the restriction of $\mu$ in $H^k(\TT)$.
Therefore, any measure $\mu \in \mathcal M$ can be viewed as a member of
$ H^{-k}(\mathbb{T}^d).$ Moreover, the density of $H^k(\TT)$ in $C(\TT)$ ensures that any member of $H^{-k}(\TT)$ can be uniquely extended to a Borel measure as well. Consequently, in our setting, when $d=1$ or $2,$ we have $f\in L^{\infty}(0,T;H^{-2}(\TT))$ and using \eqref{norm} and \eqref{controlcost1} we have 
\begin{equation*}
	\norm{f}_{L^{\infty}(0,T;H^{-2}(\TT))}\leq Ce^{\frac{C}{T}}\norm{u_0}_{L^2}. 
\end{equation*}
The proof of the local control result can be done  in a similar manner as \Cref{res_non}--\Cref{point-1}.
\end{proof}

\noindent
\textbf{Acknowledgments.} Subrata Majumdar expresses his sincere gratitude to Shirshendu Chowdhury for introducing him to the geometric control approach, drawing his attention to the related literature and for fruitful discussions. He also thanks Debanjit Mondal for fruitful discussions on this topic.

\appendix

\section{Proof of well-posedness results}\label{app}
Let us recall the linearized Cahn-Hilliard system
\begin{equation}\label{abs2}
	\begin{cases}
	u_t=\mathcal A u+f & \text{ in } (0,T),\\
	u(0)=u_0,
	\end{cases}
\end{equation}
where $\mathcal A$ is defined in \eqref{abstract}. Recall the space $X_T^k$ defined in \eqref{XT}.
We have the following result.
\begin{lem}\label{est_semigroup}
Let $k>d/2.$	For any $T > 0$, $u_0 \in H^k(\TT)$ and 
	$f \in L^2(0, T; H^{k-2}(\TT))$, there exists a constant $C>0$ independent of $T$, $u_0$ and $f$ such that
	\begin{itemize}
		\item $\|\mathcal{S}(\cdot) u_0\|_{X_T^k} \le e^{CT} \|u_0\|_{H^k}$, 
		\item 
		$\Big\| \int_0^{\cdot} \mathcal{S}(\cdot - s) f(s) \, ds \Big\|_{X_T^k} 
		\le e^{CT} \|f\|_{L^2(0,T;H^{k-2}(\TT))}$.
	\end{itemize}
	\end{lem}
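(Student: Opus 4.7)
The plan is to diagonalize $\mathcal A$ on the Fourier basis $\{e^{ix\cdot j}\}_{j\in\mathbb Z^d}$ and reduce both estimates to elementary scalar inequalities on the symbols. The operator $\mathcal A=-\Delta^{2}-\Delta$ has Fourier symbol $\mu_j:=|j|^{2}-|j|^{4}=|j|^{2}(1-|j|^{2})$, so $\mathcal S(t)u_0=\sum_{j}e^{\mu_j t}(u_0)_j\, e^{ix\cdot j}$. A direct check shows $\mu_j\le 0$ for every $j\in\mathbb Z^d$, with equality precisely on the finite set of \emph{neutral modes} $\mathcal N:=\{j:|j|^{2}\in\{0,1\}\}$ (of cardinality $1+2d$), while $\mu_j\le -|j|^{4}/2$ whenever $|j|^{2}\ge 2$. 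In particular $\mathcal S(t)$ is a contraction on every $H^{s}(\TT)$, and the $e^{CT}$ factor is only needed to absorb the contribution of the finitely many neutral modes.

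For the homogeneous inequality, the $C([0,T];H^{k})$-bound is immediate, since
\begin{equation*}
\|\mathcal S(t)u_0\|_{H^{k}}^{2}=\sum_{j\in\mathbb Z^d}(1+|j|^{2})^{k}e^{2\mu_j t}|(u_0)_j|^{2}\le\|u_0\|_{H^{k}}^{2}.
\end{equation*}
For the $L^{2}(0,T;H^{k+2})$-component, I would split the Fourier sum across $\mathcal N$ and its complement. For $j\in\mathcal N$ one has $\int_{0}^{T}e^{2\mu_j t}\,dt=T$, and, since $\mathcal N$ is finite, this contributes at most $CT\,\|u_0\|_{H^{k}}^{2}$. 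For $|j|^{2}\ge 2$, $\int_{0}^{T}e^{2\mu_j t}\,dt\le \tfrac{1}{2|\mu_j|}\le |j|^{-4}$, so the factor $(1+|j|^{2})^{k+2}|j|^{-4}$ is bounded by $C(1+|j|^{2})^{k}$, yielding a tail contribution of at most $C\|u_0\|_{H^{k}}^{2}$. Summing gives $\|\mathcal S(\cdot)u_0\|_{L^{2}(0,T;H^{k+2})}^{2}\le C(1+T)\|u_0\|_{H^{k}}^{2}\le Ce^{2CT}\|u_0\|_{H^{k}}^{2}$.

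For the Duhamel term $v(t):=\int_{0}^{t}\mathcal S(t-s)f(s)\,ds$, the $j$-th Fourier coefficient is $v_j(t)=\int_{0}^{t}e^{\mu_j(t-s)}f_j(s)\,ds$. Cauchy--Schwarz in time gives
\begin{equation*}
|v_j(t)|^{2}\le \Bigl(\int_{0}^{t}e^{2\mu_j(t-s)}\,ds\Bigr)\int_{0}^{T}|f_j(s)|^{2}\,ds,
\end{equation*}
where the prefactor is $\le |j|^{-4}$ for $|j|^{2}\ge 2$ and $\le T$ for $j\in\mathcal N$. Multiplying by $(1+|j|^{2})^{k}$, using that $(1+|j|^{2})^{k}|j|^{-4}\le C(1+|j|^{2})^{k-2}$ off $\mathcal N$ and a crude $CT$-bound on $\mathcal N$, and summing in $j$, I obtain $\|v\|_{C([0,T];H^{k})}\le Ce^{CT}\|f\|_{L^{2}(0,T;H^{k-2})}$. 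The $L^{2}(0,T;H^{k+2})$-bound is handled analogously, using Young's convolution inequality
\begin{equation*}
\|v_j\|_{L^{2}(0,T)}\le \bigl\|e^{\mu_j\cdot}\mathbf 1_{[0,\infty)}\bigr\|_{L^{1}}\|f_j\|_{L^{2}(0,T)},
\end{equation*}
which yields $\|e^{\mu_j\cdot}\mathbf 1_{[0,\infty)}\|_{L^{1}}\le |\mu_j|^{-1}\le 2|j|^{-4}$ off $\mathcal N$ and is bounded by $T$ on $\mathcal N$; the factor $(1+|j|^{2})^{k+2}|j|^{-8}\le C(1+|j|^{2})^{k-2}$ then closes the estimate after summation.

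The only mild obstacle is the fact that the symbol $\mu_j$ is \emph{not} coercive on the full lattice: it vanishes on the neutral set $\mathcal N$, where the quartic decay fails. These modes must be separated from the high frequencies and estimated directly; their contributions are harmless because $\mathcal N$ is finite and independent of $u_0$ and $f$, and they are the sole source of the $e^{CT}$ factor. No elliptic or commutator identities are needed—everything reduces to summing explicit one-dimensional integrals in Fourier space.
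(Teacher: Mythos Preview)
Your argument is correct, and it takes a genuinely different route from the paper. The paper proceeds by energy estimates: it multiplies the equation $u_t=\mathcal A u$ (resp.\ $u_t=\mathcal A u+f$) by $u+(-\Delta)^{k}u$, integrates over $\TT$, absorbs the cross terms $\int_{\TT}\Delta u\,u$ and $\int_{\TT}\Delta u\,(-\Delta)^{k}u$ via Young's inequality, and concludes with Gr\"onwall. Your approach instead diagonalizes $\mathcal A$ on the Fourier basis, splits the lattice into the finite neutral set $\mathcal N=\{j:|j|^{2}\le 1\}$ and its complement where $\mu_j\le -|j|^{4}/2$, and reduces everything to explicit one-dimensional integrals and Young's convolution inequality mode by mode. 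On the torus this is cleaner and more explicit---no integration by parts, no Gr\"onwall, and the origin of the $e^{CT}$ factor (the $2d+1$ neutral modes) is transparent. The energy method, on the other hand, is more portable: it would survive on bounded domains with Neumann or Dirichlet boundary conditions where the Fourier diagonalization is not available in closed form. One small remark: when you invoke Young's inequality on $\mathcal N$ you should restrict the kernel to $[0,T]$ (otherwise $\|e^{\mu_j\cdot}\mathbf 1_{[0,\infty)}\|_{L^{1}}=\infty$ for $\mu_j=0$); you do this implicitly when you write ``bounded by $T$ on $\mathcal N$'', but it is worth saying explicitly.
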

\begin{proof}
	It is easy to check that $(\mathcal A, D(\mathcal A))$ is quasi-dissipative and maximal operator in $L^2(\TT)$ and thus thanks to \cite[Theorem 12.22]{RR06}, we have that $(\mathcal A, D(\mathcal A))$ generates strongly continuous semigroup $\{\mathcal S(t)\}_{t\geq 0} $ on $L^2(\TT)$. Consequently, for every $u_0\in L^2(\TT)$, $\mathcal S(\cdot)u_0 \in C([0, T]; L^2(\TT))$  is the unique solution of the following system:
	\begin{equation}\label{eq_semigroup}
		\begin{cases}
			u_t +\mathcal A u = 0 & \text{in }   (0, T) \times \TT, \\
			u(0) = u_0 & \text{in } \TT.
		\end{cases}
	\end{equation}
	Multiplying equation \eqref{eq_semigroup} by $u$ and $ (-\Delta)^k u$ successively, integrating over $\TT$, performing integration by parts and then adding the two resulting identities yields the following identity
	\begin{equation}\label{eq_identity}
	\begin{split}
		\frac{1}{2} &\frac{d}{dt} \left( \|u\|_{L^2}^2 + \|(-\Delta)^{\frac{k}{2}} u\|_{L^2}^2 \right)
		\\
		&+ \|(-\Delta)^{\frac{k+2}{2}} u\|_{L^2}^2+\norm{\Delta u}_{L^2}^2+\int_{\TT}\Delta u u+\int_{\TT}\Delta u (-\Delta)^{{k}}u = 0.
		\end{split}
	\end{equation}
	Let us apply Young's inequality to the last two terms of the above identity and for sufficiently small $\epsilon>0$, we obtain the following:  \begin{align}\label{eq_youngs}
		\notag	&\int_{\TT}\Delta u u \leq \epsilon \norm{\Delta u}^2_{L^2}+ C_\epsilon\norm{ u}^2_{L^2}\\
		&\int_{\TT}\Delta u (-\Delta)^{{k}}u \leq \epsilon \|(-\Delta)^{\frac{k+2}{2}} u\|_{L^2}^2+ C_\epsilon \|(-\Delta)^{\frac{k}{2}} u\|_{L^2}^2.
	\end{align}
	Combining \eqref{eq_youngs} with \eqref{eq_identity}, application of Gr\"onwall’s inequality yields that
	\begin{equation*}
	\|u\|_{X_T^k} \le e^{CT} \|u_0\|_{H^k}.
	\end{equation*}
	For the second estimate, we note that 
	\begin{equation*}
	\int_0^{\cdot}\mathcal  S(\cdot - s) f(s) \, ds
	\end{equation*}
	is the unique solution of
	\begin{equation}\label{eq_nonhomogeneous_semigroup}
		\begin{cases}
			u_t = \mathcal A u + f & \text{ in } (0, T) \times \TT, \\
			u(0) = 0 & \text{ in } \TT.
		\end{cases}
	\end{equation}
Multiplying equation \eqref{eq_nonhomogeneous_semigroup} by $u + (-\Delta)^k u$, integrating over $\TT$, performing integration by parts, we obtain for any $\epsilon > 0$
	\begin{align*}
		\frac{1}{2} \frac{d}{dt} &\left( \|u\|_{L^2}^2 + \|(-\Delta)^\frac{k}{2} u\|_{L^2}^2 \right)
		+ \|(-\Delta)^{\frac{k+2}{2}} u\|_{L^2}^2+\norm{\Delta u}_{L^2}^2\\
		&\leq \epsilon \left( \|\Delta u\|_{L^2}^2 + \|(-\Delta)^{\frac{k+2}{2}} u\|_{L^2}^2 \right)
		+ C_\epsilon \bigg( \|u\|_{L^2}^2 + \|(-\Delta)^\frac{k}{2} u\|_{L^2}^2+\|f\|_{ {H^{k-2}}}^2 \bigg).
	\end{align*}
	Choosing sufficiently small $\epsilon>0$, we obtain
	\begin{align*}
	\frac{d}{dt} \left( \|u\|_{L^2}^2 + \|(-\Delta)^\frac{k}{2} u\|_{L^2}^2 \right)
	&+ \|(-\Delta)^{\frac{k+2}{2}} u\|_{L^2}^2+ \|(-\Delta)^\frac{k}{2} u\|_{L^2}^2\\
&	\leq C \bigg( \|u\|_{L^2}^2 +\|(-\Delta)^\frac{k}{2} u\|_{L^2}^2+ \|f\|_{ {H^{k-2}}}^2\bigg).
	\end{align*}
	Applying Gr\"onwall's inequality, we deduce that
	$
	\|u\|_{X_T^k} \le e^{CT} \left( \|f\|_{L^2(0, T; H^{k-2}(\TT))}\right).
	$
\end{proof}
From Lemma \ref{est_semigroup}, the following results follows immediately
\begin{prop}
	Let $k>d/2$ and $T > 0$. For any $u_0 \in H^k(\TT)$ and 
	$f \in L^2(0, T; H^{k-2}(\TT))$, equation \eqref{abs2} has a unique solution in $X_T^k.$
	\end{prop}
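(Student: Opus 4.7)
The plan is to construct the solution by Duhamel's formula and read off both existence and regularity directly from Lemma \ref{est_semigroup}. Specifically, since $(\mathcal A,D(\mathcal A))$ generates a $C_0$-semigroup $\{\mathcal S(t)\}_{t\geq 0}$ on $L^2(\TT)$ (as already established in the proof of Lemma \ref{est_semigroup}), I would define the mild solution by
\[
u(t) := \mathcal S(t)u_0 + \int_0^t \mathcal S(t-s)f(s)\,ds, \qquad t\in[0,T],
\]
and verify it satisfies \eqref{abs2} in the mild sense. The two bounds in Lemma \ref{est_semigroup}, applied to the linear and the source-term parts separately, together with the triangle inequality, give
\[
\|u\|_{X_T^k} \leq e^{CT}\bigl(\|u_0\|_{H^k} + \|f\|_{L^2(0,T;H^{k-2}(\TT))}\bigr),
\]
so $u\in X_T^k = C([0,T];H^k(\TT))\cap L^2(0,T;H^{k+2}(\TT))$, which settles existence.

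For uniqueness, by linearity it suffices to show that any $u\in X_T^k$ solving \eqref{abs2} with $u_0=0$ and $f=0$ is identically zero. This is an immediate consequence of the energy identity \eqref{eq_identity} combined with the Young-type absorption \eqref{eq_youngs} and Gr\"onwall's inequality, which is exactly the estimate derived inside the proof of Lemma \ref{est_semigroup}; I would simply note that the computation applies verbatim to any solution in $X_T^k$, not only to the one constructed via the semigroup.

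There is no real obstacle here: the well-posedness statement is essentially a corollary of Lemma \ref{est_semigroup}, and the only mild bookkeeping issue is to verify that the Duhamel convolution makes sense as a Bochner integral in $H^{k-2}(\TT)$ (ensured by $f\in L^2(0,T;H^{k-2}(\TT))$) and that the resulting function does gain the two derivatives needed to lie in $L^2(0,T;H^{k+2}(\TT))$, which is precisely the maximal-regularity-type content of the second bound in Lemma \ref{est_semigroup}. The continuity in time with values in $H^k(\TT)$ follows from the strong continuity of $\mathcal S(\cdot)$ on $H^k(\TT)$ (which is invariant under $\mathcal S(t)$, as visible from the Fourier-side expression) together with the dominated convergence theorem applied to the convolution term.
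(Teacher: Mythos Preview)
Your proposal is correct and matches the paper's approach: the paper states this proposition with no proof beyond the remark ``From Lemma \ref{est_semigroup}, the following results follows immediately'', and your argument via Duhamel's formula combined with the two semigroup bounds is exactly the intended justification. Your uniqueness argument via the energy identity \eqref{eq_identity}--\eqref{eq_youngs} is also the natural one and consistent with the paper's framework.
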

\subsection{Proof of \Cref{wellposed2}}\label{pr_of_1}
\begin{proof}
	To prove this result, we first establish the local existence in time and uniqueness of the solution using a fixed-point argument. We will show that there exists $\theta>0$ such that equation \eqref{eq_extension} has a unique solution $u\in C([0,\theta];H^{k}(\TT)).$ 
	Let $\tau >0$, and $v \in X_{\tau}^k$. For any $t \in [0, \tau]$, we set
	\begin{equation*}
		\Phi_{u_0}(v)(t) = \mathcal S(t) u_0 + \int_0^t \mathcal S(t - \sigma) F(v)(\sigma) \, d\sigma,
	\end{equation*}
	where
	\begin{equation*}
		F(v) = -\Delta^2 \varphi - \Delta \varphi +\Delta(v + \varphi)^3  + \eta.
	\end{equation*}
	Since $H^{k}(\TT)$ is a Banach algebra when $k>d/2$, we estimate the nonlinear term of $F(v)$ as follows
	\begin{equation*}
		\|\Delta(v + \varphi)^3\|_{H^{k-2}} \leq C \|(v + \varphi)^3\|_{H^{k}} 
		\leq C\|(v + \varphi)\|^3_{H^{k}}
		\leq C\left( \|v\|^3_{H^{k}} + \|\varphi\|^3_{H^{k}}\right),
	\end{equation*}
for some $C>0$ independent of $u_0$ and $\tau.$
Consequently, we deduce
	\begin{equation*}
		\norm{	F(v)}_{H^{k-2}}\leq C\big(\norm{\varphi}_{H^{k+2}}+\|v\|^3_{H^{k}} + \|\varphi\|^3_{H^{k}}+\|\eta\|_{ {H^{k-2}}}\big).
	\end{equation*}	
	Using this bound, we have
	\begin{equation*}
		\norm{	F(v)}_{L^2(0,\tau;H^{k-2}(\TT)}\leq C\bigg(\left(\norm{\varphi}_{H^{k+2}}+\norm{\varphi}^3_{H^{k}}\right)+\sqrt{\tau}\|v\|^3_{X_{\tau}^k} +\|\eta\|_{ {L^2(0,\tau;H^{k-2}(\TT))}}\bigg).
	\end{equation*}	
	Finally, by applying the semigroup estimate  Lemma \ref{est_semigroup}, we get a positive constant $C>0$ such that
	\begin{align*}
		\norm{\Phi_{u_0}(v)}_{X_{\tau}^k}&\leq e^{C\tau}\bigg(\norm{u_0}_{H^{k}}+\|F(v)\|_{ {L^2(0,\tau;H^{k-2}(\TT))}}\bigg)\\
		& \leq e^{C\tau}\bigg(\norm{u_0}_{H^{k}(\TT)}+\norm{\varphi}_{H^{k+2}}+\norm{\varphi}^3_{H^{k}}+\|\eta\|_{ {L^2(0,\tau;H^{k-2}(\TT))}}\bigg)+e^{C\tau}\sqrt{\tau}\|v\|^3_{X_{\tau}^k}.
	\end{align*}
	For any $v_1, v_2 \in X_{\tau}^k,$ we obtain the following estimate
	\begin{align*}
		\norm{\Phi_{u_0}(v_1)-\Phi_{u_0}(v_2)}_{X_{\tau}^k}&\leq e^{C\tau} \norm{F(v_1)-F(v_2)}_{{ {L^2(0,\tau;H^{k-2}(\TT))}}}\\
		&\leq e^{C\tau} \norm{v_1-v_2}_{X_{\tau}^k}\sqrt{\tau}\left(\norm{v_1}^2_{X_{\tau}^k}+\norm{v_2}^2_{X_{\tau}^k}+\norm{\varphi}^2_{H^{k}}\right).
	\end{align*}
	Next, let us denote $R:=2e^{C\tau}\bigg(\norm{u_0}_{H^{k}}+\norm{\varphi}_{H^{k+2}}+\norm{\varphi}^3_{H^{k}}+\|\eta\|_{ {L^2(0,\tau;H^{k-2}(\TT))}}\bigg).$
	We now choose $v_1, v_2\in \mathcal{B}_{\tau}^k(R):=\{v\in X_{\tau}^k: \norm{v}_{X_{\tau}^k}\leq R \}$. We can always find $\theta>0$ sufficiently small such that for every $\tau\leq \theta$, the following conditions hold
	\begin{equation*}
		\begin{cases}
			2e^{C\tau}\sqrt{\tau}R^2\leq 1,\\
			e^{C\tau}\sqrt{\tau}\left(2R^2+\norm{\varphi}^2_{H^{k}}\right)\leq \frac{1}{2}.
		\end{cases}
	\end{equation*}
	Under these assumptions, one can easily check that $\Phi_{u_0}$ is a contraction on $\mathcal{B}_{\theta}^k(R)$ and thus has a unique fixed point $u\in \mathcal{B}_{\theta}^k(R)$, which gives a solution of \eqref{eq_extension} for small $\theta>0,$ moreover we have
	\begin{equation*}
		\sup_{0\leq t\leq \theta}\norm{u(t)}_{H^k}\leq R.
	\end{equation*}  
Thus, there exists $\mathcal T(u_0, \varphi,\eta)>0$ such that $[0,\mathcal T)$ be the maximal interval of existence of $u$, solution of  \eqref{eq_extension}. If $\mathcal T < +\infty$, then $\|u(t)\|_{H^{k}} \to +\infty$ as $t \to \mathcal T^{-}$, otherwise 
$u$ could be extended beyond $\mathcal T$, which contradicts the maximality of $\mathcal T$. 
Thus for any $0 < T < \mathcal T$, \eqref{eq_extension} possesses unique solution $u\in C([0,T];H^k(\TT)).$ 
\end{proof}

	\subsection{Proof of \Cref{max}}\label{pr_of_2}
	Let $T<\mathcal T(u_0,\varphi,\eta)$ and $\hat u_0\in H^k(\TT)$, $ \varphi\in H^{k+2}(\TT)$ and $ \eta \in L^2_{\mathrm{loc}}(0,\infty;H^{k-2}(\TT))$. Thanks to \Cref{wellposed2}, solution $\hat u$ with $\hat u_0$, $\varphi$ and $\eta$ exists up to the maximal time $\mathcal T(\hat u_0, \varphi, \eta).$ We will show that $T<\mathcal T(\hat u_0, \varphi, \eta)),$ provided $\delta>0$ is sufficiently small. set 
	$w = u - \hat{u}$. Then $w$ satisfies the following problem when $t< \min\{\mathcal T(u_0,\varphi,\eta), T(\hat u_0, \varphi, \eta)\}$
	\begin{equation}\label{eq_diff1}
		\begin{cases}
			w_t + \Delta^2 w +\Delta w  = \Delta \left(w^3-3w^2(u+\varphi)+3w(u+\varphi)^2\right),  \\
			w(0) = w_0=u_0 - \hat{u}_0.
		\end{cases}
	\end{equation}
Let us multiply equation \eqref{eq_diff1} by $w$ and integrate it over $\TT$. Performing integration by parts and applying Young's inequality, we obtain for sufficiently small $\epsilon>0$ 
	\begin{align}\label{max_est1}
	\frac{1}{2} \frac{d}{dt}  \|w\|_{L^2}^2 
		+\norm{\Delta w}_{L^2}^2
		\leq \epsilon  \|\Delta w\|_{L^2}^2
		+ C_\epsilon \|w\|_{L^2}^2 +\mathcal I,
	\end{align}
	where $
	\mathcal I = \int_{\TT} w\Delta w^3-3\int_{\TT} w\Delta w^2(u+\varphi)+3\int_{\TT} w\Delta w(u+\varphi)^2=:\mathcal I_1+\mathcal I_2+\mathcal I_3.
	$
	Again, performing integrating by parts and using Young's inequality and noting the fact that $H^k(\TT)$ is an algebra when $k>d/2,$ we estimate $\mathcal I$ as follows
	\begin{align}\label{max_est2}
	\notag	&\mathcal I_1 = -3\int_{\TT}w^2|\nabla w|^2\,dx \leq 0,\\
	\notag	&\mathcal I_2\leq 3\norm{\Delta w}_{L^2}\norm{w^2(u+\varphi)}_{L^2}\leq \epsilon \norm{\Delta w}_{L^2}^2+C_\epsilon \norm{w}_{H^k}^4\norm{u+\varphi}_{H^k}^2\\
		&\mathcal I_3\leq 3\norm{\Delta w}_{L^2}\norm{w(u+\varphi)^2}_{L^2}\leq \epsilon \norm{\Delta w}_{L^2}^2+C_\epsilon \norm{w}_{H^k}^2\norm{u+\varphi}_{H^k}^4.
	\end{align}
Thus combining \eqref{max_est1} and \eqref{max_est1}, we obtain
\begin{align}\label{max_est10}
	\frac{d}{dt}  \|w\|_{L^2}^2 
	+\norm{\Delta w}_{L^2}^2
	\leq 
	 C \big(\|w\|_{L^2}^2 +\norm{w}_{H^k}^2\norm{u+\varphi}_{H^k}^2+\norm{w}_{H^k}^4\norm{u+\varphi}_{H^k}^4\big).
\end{align}
Next, multiplying \eqref{eq_diff1} by $(-\Delta)^k w$  when $k>d/2$ and using integration by parts we have:
	\begin{align}\label{max_est5}
	\notag	\frac{1}{2} \frac{d}{dt} \left(\|(-\Delta)^{\frac{k}{2}} w\|_{L^2}^2 \right)
		&+ \|(-\Delta)^{\frac{k+2}{2}} w\|_{L^2}^2
		\leq \epsilon  \|(-\Delta)^{\frac{k+2}{2}} w\|_{L^2}^2 \\
		&
		+ C_\epsilon  \|(-\Delta)^{\frac{k}{2}} w\|_{L^2}^2+\mathcal J.
	\end{align}
where $\mathcal J=\int_{\TT}(-\Delta)^k w \Delta w^3-3\int_{\TT} (-\Delta)^k w\Delta w^2(u+\varphi)+3\int_{\TT}(-\Delta)^k w\Delta w(u+\varphi)^2=:\mathcal J_1+\mathcal J_2+\mathcal J_3.$ 
Let us estimate the term $\mathcal J$ as above.
	\begin{align*}
	\notag|\mathcal J_1|=	\left|\int_{\TT}(-\Delta)^k w \, \Delta(w)^3\right|&=\left|\int_{\TT}(-\Delta)^{\frac{k+2}{2}} w \, (-\Delta)^{\frac{k}{2}}(w^3)\right|\\
	\notag	&\leq 
		\epsilon \|(-\Delta)^{\frac{k+2}{2}} w\|^2_{L^2}+C_{\epsilon} \|(-\Delta)^{\frac{k}{2}} (w)^3\|_{L^2}^2\\
	\notag	&\leq \epsilon \|(-\Delta)^{\frac{k+2}{2}} w\|^2_{L^2}+C_{\epsilon} \| w^3\|_{H^k}^2\\
		&\leq \epsilon \|(-\Delta)^{\frac{k+2}{2}} w\|^2_{L^2}+C_{\epsilon} \| w\|_{H^k}^6.
	\end{align*}
Proceeding with arguments similar to those used in the estimate \eqref{max_est2}, we estimate the remaining terms $\mathcal J_2, \mathcal J_3$ and after combining these with \eqref{max_est5}, obtain the following
	\begin{align}\label{max_est3}
	\notag	 \frac{d}{dt} \left(   \|(-\Delta)^{\frac{k}{2}} w\|_{L^2}^2 \right)
		+ \|(-\Delta)^{\frac{k+2}{2}} w\|_{L^2}^2
	\notag	&	\leq 
		C \bigg(\|w\|_{H^k}^2+ \|w\|_{H^k}^6 +\norm{w}_{H^k}^2\norm{u+\varphi}_{H^k}^2 \\
		&\qquad\quad  +\norm{w}_{H^k}^4\norm{u+\varphi}_{H^k}^4
		 \bigg).
	\end{align}
	Putting together \eqref{max_est10} and \eqref{max_est3}, we obtain a constant $C>0$ depending on $\Lambda$, such that following estimates holds.
	\begin{align*}
		 \frac{d}{dt} \left(   \| w\|_{H^k}^2 \right)
		&
		\leq 
		C \left( \norm{w}^2_{H^k}+ \| w\|_{H^k}^6\right), \qquad t<\min\{T, \mathcal T(u_0,\varphi,\eta),\mathcal T(\hat u_0, \varphi, \eta)\}. 
	\end{align*}
Thanks to Gr\"onwall inequality, for some $M>0$, we deduce the following 
\begin{align}\label{grn5}
	 \| w(t)\|_{H^k}^2 
	&
	\leq 
	e^{M T} \left( \delta+\int_{0}^{t}\| w(s)\|_{H^k}^6\right), \qquad t<\min\{T,  \mathcal T(\hat u_0, \varphi, \eta)\}.
\end{align}
Let us denote that $ \tau:=\sup\{t< \min\{T,  \mathcal T(\hat u_0, \varphi, \eta)\}: \norm{w(t)}_{H^k}<1\}.$
We also define
\begin{equation*}
\Phi(t) := \delta + \int_0^t \|w(s)\|_{H^k}^{6}\,ds .
\end{equation*}
Inequality \eqref{grn5} implies that there exists a constant $C>0$ such that the following holds
\begin{equation*}
\frac{\dot\Phi}{\Phi^{\,3}} \le C.
\end{equation*}
Integrating this differential inequality, we obtain
\[
\Phi(t) \le
	\displaystyle 
	\delta\big(1 - 2C\delta^{\,2} t\big)^{-1/2}, 
\qquad t < \min\{T,  \mathcal T(\hat u_0, \varphi, \eta)\}.
\]
Choosing $\delta>0$ sufficiently small, we deduce that
\begin{equation*}
	\Phi(t) < C_1\,\delta, 
	\qquad t < \min\{T, \mathcal  T(\hat u_0, \varphi, \eta)\}.
\end{equation*}
Choosing $\delta $ further small enough, we have $\| w(t)\|_{H^k} <1$ for $t< \min\{T, \mathcal  T(\hat u_0, \varphi, \eta)\},$
which implies that $T < \mathcal T(\hat u_0, \varphi, \eta)$ and this completes the proof.

\subsection{Proof of \Cref{Lipschitz type}}\label{pr_of_3}
\begin{proof}
	Let $k > d/2$, $R > 0$, and 
	$\eta \in L^{2}_{\mathrm{loc}}((0, \infty); H^{k-2}(\mathbb{T}))$. 
	Assume that $u_0, \hat u_0 \in H^k(\mathbb{T}^d)$ satisfy 
	$\|u_0\|_{H^k} \le R$ and $\|\hat u_0\|_{H^k} \le R$. 
Thanks to the \Cref{wellposed2}, for any time
	$T < \min\{\mathcal T(u_0, \eta), \mathcal T(\hat u_0, \eta)\}$, the corresponding  solutions to equation \eqref{eq_extension} satisfy
	$u, \hat u\in C([0, T]; H^k(\mathbb{T}^d))$ 
	with initial data $u(0) = u_0$ and $\hat u(0) = \hat u_0$ respectively. 
	To this purpose, let us 
	set 
	$w = u - \hat{u}$. Then $w$ satisfies the problem
	\begin{equation}\label{eq_diff}
		\begin{cases}
			w_t + \Delta^2 w +\Delta w  = \Delta (w(u^2+u\hat u+\hat u^2)) & \text{in }  (0, T) \times \mathbb{T}^d, \\
			w(0) = w_0=u_0 - \hat{u}_0 & \text{in } \mathbb{T}^d.
		\end{cases}
	\end{equation}
	Multiplying equation \eqref{eq_diff} by $w$ and integrating over $\TT$, we obtain for $k>d/2$:
	\begin{align}\label{est15}
		 \frac{d}{dt} \left( \|w\|_{L^2}^2 \right)
		&+\norm{\Delta w}_{L^2}^2
		\leq 
		C\norm{w}^2_{L^2}\left(1+\norm{u}^4_{H^k}+\norm{\hat u}^4_{H^k}\right),
	\end{align}
where the nonlinear terms are estimated as below using the facts: for $k>d/2$, $H^k(\TT)\hookrightarrow L^\infty(\TT)$ and $H^k(\TT)$ is an algebra, more precisely
\begin{align*}
	\left|\int_{\TT} w \, \Delta(w\, v_1\,v_2)\right|&=\left|\int_{\TT}\Delta w \, (w\, v_1\, v_2)\right|\\
	&\leq 
	\epsilon \|\Delta w\|_{L^2}^2+C_{\epsilon}  \| w\, v_1\, v_2\|_{L^2}^2\\
	&\leq \epsilon \|\Delta w\|_{L^2}^2+C_{\epsilon}\| w\|_{L^2}^2\norm{v_1v_2}^2_{L^{\infty}}\\
	&\leq \epsilon \|\Delta w\|_{L^2}^2+C_{\epsilon} \| w\|_{L^2}^2\| v_1\|_{H^k}^2\| v_2\|_{H^k}^2.
\end{align*}
for any $v_1,v_2\in H^k(\TT)$.

		Next multiplying \eqref{eq_diff} by $(-\Delta)^k w$  when $k>d/2$ and using integration by parts we have the following:
	\begin{align}\label{est16}
		 \frac{d}{dt} \left(   \|(-\Delta)^{\frac{k}{2}} w\|_{L^2}^2 \right)
		+ \|(-\Delta)^{\frac{k+2}{2}} w\|_{L^2}^2
		&	\leq 
		C \| w\|_{H^k}^2 \left(1+ \norm{u}^4_{H^k}+\norm{\hat u}^4_{H^k}\right).
	\end{align}
At this point, we have estimated the nonlinear terms in the following manner
	\begin{align*}
	\left|\int_{\TT}(-\Delta)^k w \, \Delta(w\, v_1\,v_2)\right|&=\left|\int_{\TT}(-\Delta)^{\frac{k+2}{2}} w \, (-\Delta)^{\frac{k}{2}}(w\, v_1\, v_2)\right|\\
	&\leq 
	\epsilon \|(-\Delta)^{\frac{k+2}{2}} w\|_{L^2}^2+C_{\epsilon} \|(-\Delta)^{\frac{k}{2}} (w v_1 v_2)\|_{L^2}^2\\
	&\leq \epsilon \|(-\Delta)^{\frac{k+2}{2}} w\|_{L^2}^2+C_{\epsilon} \| (w\, v_1\, v_2)\|_{H^k}^2\\
	&\leq \epsilon \|(-\Delta)^{\frac{k+2}{2}} w\|_{L^2}^2+C_{\epsilon} \| w\|_{H^k}^2\| v_1\|_{H^k}^2\| v_2\|_{H^k}^2,
\end{align*}
for any $v_1,v_2\in H^k(\TT)$.

Adding \eqref{est15} and \eqref{est16}, we have a positive constant $C$ such that
	\begin{align*}
	 \frac{d}{dt} \left(   \|w\|_{H^k}^2 \right)
		&	\leq 
		C \| w\|_{H^k}^2 \left(1+ \norm{u}^4_{H^k}+\norm{\hat u}^4_{H^k}\right).
	\end{align*}
Integrating the above inequality over $(0,T)$ and using the fact that $u, \hat u \in C([0, T]; H^k(\mathbb{T}^d)),$ we obtain the following estimate for some $C>0$
	\begin{align*}
		    \| w\|_{C([0,T];H^k(\TT))} 
		&	\leq 
		e^{CT} \left(\norm{w_0}_{H^k}+ \sqrt{T}\| w\|_{C([0,T];H^k(\TT))} \left(\| u\|^2_{C([0,T];H^k(\TT))}+ \|\hat u\|^2_{C([0,T];H^k(\TT))}\right)\right) .
	\end{align*}
Observe that, using the fact $u_0, \hat u_0 \in B_{H^k}(0,R)$ for some $R>0$, from the proof of \Cref{wellposed2}, it follows that, 
 there exists $C_1>0$ depending only on $\norm{\eta}_{L^2(0,T; H^{k-2}(\TT))}$ and another constant $C>0$ such that	\begin{align*}
	\| w\|_{C([0,T];H^k(\TT))} 
	&	\leq 
	e^{CT} \left(\norm{w_0}_{H^k}+ 2(C_1+R^2)\sqrt{T}\| w\|_{C([0,T];H^k(\TT))} \right).
\end{align*}
	If $e^{CT}(C_1+R^2)\sqrt{T}<\frac{1}{4}$, we obtain the following required inequality for some constant $C>0$ 
	such that
	\begin{align*}
		\| w\|_{C([0,T];H^k(\TT))} \leq e^{CT} \norm{w_0}_{H^k}.
	\end{align*} 
If the above condition does not hold on the entire interval $[0,T]$, we divide $[0,T]$ into smaller subintervals on which the inequality $e^{CT}(C_1+R^2)\sqrt{T}<\frac{1}{4}$ is satisfied. Applying the same argument on each subinterval yields the desired result. 
\end{proof}

\subsection{Proof of \Cref{global_wellposed}}\label{app_globalwellposed}
From the local well-posedness result, we know that for any 
$u_0 \in L^{2}(\TT)$ there is a time $\mathcal T(u_0,\eta) > 0$ such that for every $0 < T < \mathcal T(u_0,\eta)$ there exists a unique solution $
u \in C\big([0,T];L^{2}(\TT)\big)\cap L^{2}\big(0,T;H^{2}(\mathbb{T}^d)\big)
$
of problem \eqref{eq_main}. Moreover, if $\mathcal T(u_0,\eta)<+\infty$, then
$
\|u(t)\|_{L^{2}} \to +\infty 
\text{ as } t\to \mathcal T(u_0,\eta).
$
We shall prove that 
$
\|u(t)\|_{L^{2}} \le C  
\quad \text{as } t \to \mathcal T(u_0,\eta),
$
and therefore we deduce that $\mathcal T(u_0,\eta)=+\infty$, i.e., the 
solution is globally well-defined. For that, if possible let us assume that $\mathcal T(u_0,\eta)<+\infty.$ 
Multiplying equation \eqref{eq_main} by $u$ and integrating over $\TT$, we obtain for almost every $0<t<\mathcal T(u_0,\eta)$
\begin{align*}
	\frac{1}{2} \frac{d}{dt} \left( \|u\|_{L^2}^2 \right)
	&+\norm{\Delta u}_{L^2(\mathbb{T}^d)}^2
	\leq \epsilon  \|\Delta u\|_{L^2}^2
	+ C_\epsilon \left( \|u\|^2_{L^2} +  \|\eta\|_{ {H^{-2}}}^2 \right)+\int_{\mathbb{T}^d}u\Delta(u)^3.
\end{align*}
Noting the fact $\int_{\mathbb{T}^d}u\Delta(u)^3\leq 0,$ we have $\frac{1}{2} \frac{d}{dt} \left( \|u(t)\|_{L^2(\mathbb{T}^d)}^2 \right)
\leq 
C \left( \|u(t)\|^2_{L^2(\mathbb{T}^d)} +  \|\eta\|_{ {H^{-2}(\mathbb{T}^d)}} \right).$
Using Gr\"onwall's inequality we have 
\begin{equation*}
	\|u(t)\|_{L^2}^2\leq e^{2C\mathcal T(u_0,\eta)}\left(\|u_0\|_{L^2}^2+\norm{\eta}^2_{L^2(0,\mathcal T(u_0,\eta); L^2(\mathbb{T}^d))}\right).
\end{equation*}
Since the right-hand side does not depend on $t$, we can take the limit as 
$t \to \mathcal T(u_0,\eta)$ and conclude that 
$
\|u(t)\|_{L^2} \le C$ as  $t \to \mathcal T(u_0,\eta).
$ Thus, we arrive at a contradiction and this completes the proof.

\section{Proof of some technical lemmas}\label{estp3}
\subsection{Estimate of $\mathcal P_3$ in \Cref{prop_asympt}}
The goal of this section is to prove the following. 
\begin{lem}\label{estimateofp3}
	Assume $v,$ $\varphi,$ and $w$ are as in \Cref{prop_asympt}. There exists a constant $C>0$  only depending on $k$ and $d$, such that the following estimate holds
	\begin{equation*}
		\int_{\TT} (-\Delta)^{k}v\Delta((v+w)\phi^2)\leq C \|\varphi\|_{H^{3k+2}}^2\left( \|v\|_{H^k}^2+\|w\|_{H^k}^2\right). 
	\end{equation*}  
\end{lem}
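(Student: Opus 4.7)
The integrand has total differential order $2k+2$: namely $2k$ from $(-\Delta)^k$ acting on $v$ and $2$ from the outer $\Delta$. My plan is to perform iterated integration by parts on $\TT$ (where no boundary terms appear) in order to rewrite
\[
\int_{\TT} (-\Delta)^k v \cdot \Delta((v+w)\varphi^2)\,dx
\]
as a finite linear combination of terms of the form
\[
\int_{\TT}\partial^a v\cdot\partial^b(v+w)\cdot\partial^{c_1}\varphi\cdot\partial^{c_2}\varphi\,dx,\qquad |a|,|b|\le k,\qquad |a|+|b|+|c_1|+|c_2|=2k+2.
\]
Once such a form is achieved, Hölder's inequality with the distribution $(L^2,L^2,L^\infty,L^\infty)$, combined with $\|\partial^a v\|_{L^2}\le\|v\|_{H^k}$ and $\|\partial^b(v+w)\|_{L^2}\le\|v\|_{H^k}+\|w\|_{H^k}$, yields a bound involving $\|\partial^{c_j}\varphi\|_{L^\infty}$ for $j=1,2$. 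The Sobolev embedding $H^k(\TT)\hookrightarrow L^\infty(\TT)$ (valid since $k>d/2$) then gives $\|\partial^{c_j}\varphi\|_{L^\infty}\le C\|\varphi\|_{H^{k+|c_j|}}\le C\|\varphi\|_{H^{3k+2}}$, because $|c_j|\le 2k+2$. Summing over the finitely many terms produces the claimed estimate.

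To realize the IBP reduction, I first expand $\Delta((v+w)\varphi^2)$ via Leibniz into three pieces: $\varphi^2\Delta(v+w)$, $2\nabla(v+w)\cdot\nabla(\varphi^2)$, and $(v+w)\Delta(\varphi^2)$. The last two contributions carry at most one derivative on $v+w$; after transferring $(-\Delta)^k$ onto the remaining factor by self-adjointness, the Banach-algebra property of $H^k(\TT)$ (since $k>d/2$) gives bounds of the form $\|v\|_{H^k}(\|v\|_{H^k}+\|w\|_{H^k})\|\varphi\|_{H^{k+2}}^2$, absorbed into the desired one. The principal difficulty lies in $\int (-\Delta)^k v\cdot\varphi^2\Delta(v+w)$, for which a naive estimate would require $v+w\in H^{k+2}$, which is unavailable. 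Here I iteratively use the identity $\Delta(fg)=\Delta f\,g+2\nabla f\cdot\nabla g+f\,\Delta g$ to move derivatives off $v$ (which initially carries $2k$ of them) onto $\varphi^2$, ensuring at each step that $v+w$ never accumulates more than $k$ derivatives; whenever a branch of the expansion threatens to exceed this bound on $v+w$, one performs an additional IBP on $v+w$, sending the derivative either onto $\varphi^2$ or back onto $v$ (which still has room up to $k$ derivatives).

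The main obstacle is this combinatorial bookkeeping: verifying that the tree of terms generated by the iterated Leibniz/IBP steps is finite and that all leaves have $|a|,|b|\le k$. At the symbolic level this corresponds to rewriting the Fourier multiplier $|p|^{2k}|r|^2$ (where $p,r,q$ are the Fourier modes of $v$, $v+w$, $\varphi^2$ with $p+q+r=0$) as a polynomial combination in which both the $p$- and $r$-degrees are at most $k$, using the single relation $p+q+r=0$. This can be achieved by iteratively applying $p_i=-q_i-r_i$ (or $r_i=-q_i-p_i$) and preferentially routing the transferred derivative onto $q$; a monovariant argument on $\max(|a|-k,0)+\max(|b|-k,0)$ shows that the procedure terminates after finitely many steps, with the number of resulting terms and the constant depending only on $k$ and $d$. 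Applying the Hölder/Sobolev bound described above to each such term then completes the proof.
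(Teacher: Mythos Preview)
Your reduction scheme has a genuine gap: the rewriting you propose---expressing the integral as a finite sum of terms $\int \partial^a v\,\partial^b(v+w)\,\partial^{c_1}\varphi\,\partial^{c_2}\varphi$ with $|a|,|b|\le k$---is \emph{not} achievable as an algebraic identity. Already for $k=1$ in one space dimension, the Fourier multiplier $p^2 r^2$ (on the variety $p+q+r=0$) cannot be written as a linear combination of monomials $p^a r^b q^c$ with $a,b\le 1$: the only homogeneous degree-$4$ candidates are $q^4,\ pq^3,\ rq^3,\ prq^2$, and matching the coefficients of $p^4,\ r^4,\ p^3r,\ pr^3,\ p^2r^2$ after restricting to $q=-(p+r)$ gives an inconsistent linear system. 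Your monovariant $\max(|a|-k,0)+\max(|b|-k,0)$ does not strictly decrease under the substitution you describe: replacing one factor $p_i$ by $-q_i-r_i$ produces, via Leibniz, a branch where the derivative lands on $r$ rather than on $q$, and in that branch the $r$-degree goes up by exactly one while the $p$-degree drops by one, so the monovariant is unchanged (e.g.\ $p^2r^2\mapsto -pqr^2 - pr^3$, and $pr^3$ still has monovariant $2$). The procedure therefore cycles and does not terminate in the form you claim.

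What the paper does instead is to accept that one irreducible top-order piece survives. After integrating by parts to put all $2k+2$ derivatives on $v\varphi^2$ and then moving half of them back, the term
\[
-\sum_{|\alpha|=k+1}\frac{(k+1)!}{\alpha!}\int_{\TT}|\partial^\alpha v|^2\,\varphi^2
\]
remains; it carries $k+1$ derivatives on each copy of $v$ and cannot be reduced further by any IBP/Leibniz manipulation. The key observation is that this term has a sign: since $\varphi^2\ge 0$ pointwise, it is nonpositive and is simply dropped from the upper bound. All the remaining cross terms genuinely satisfy $|a|,|b|\le k$ (after one more integration by parts, and a symmetrization/cancellation trick for the borderline case $|\gamma|=2k+1$), and those are estimated exactly along the lines you outline. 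So the missing ingredient in your argument is precisely the use of the pointwise nonnegativity of $\varphi^2$; without it the inequality cannot be closed at the $H^k$ level.
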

\begin{proof}Our main goal is to estimate a term like 
\begin{equation*}
I:=\int_{\TT} (-\Delta)^{k}v\Delta(v\phi^2).
\end{equation*}
Let $\alpha\in\mathbb N^d$ be a multi-index of length $|\alpha|:=\alpha_1+\ldots+\alpha_d$. We denote by $\alpha!$ the number $\alpha_1!\alpha_2!\ldots\alpha_d!$ and we say that for two multi-indices $\alpha$ and $\beta$, $\alpha\leq \beta$ if $\alpha_i\leq \beta_i$ for all $i\in\inter{1,d}$ and write $\alpha<\beta$ if we have strict inequality entry-wise. 

For sufficiently regular functions $u$ and $v$, we recall the usual Leibniz rule
\begin{equation}\label{leibniz_formula}
\dif{\alpha}(uv)=\sum_{\beta\leq \alpha}\binom{\alpha}{\beta}\dif{\beta}u\dif{\alpha-\beta}v
\end{equation}
where $\binom{\alpha}{\beta}:=\prod_{i=1}^{d}\binom{\alpha_i}{\beta_i}=\frac{\alpha!}{\beta!(\alpha-\beta)!}$. Let $k\in\mathbb N^*$. The operator $(-\Delta)^k$ of order $2k$ is defined as
\begin{equation*}
u \mapsto (-\Delta)^{k}u:=(-1)^k\sum_{|\alpha|=k} \frac{k!}{\alpha!}\dif{2\alpha}u.
\end{equation*}
To estimate $I$, we integrate by parts and note that
\begin{equation}\label{I_initial}
I=-\int_{\TT}v(-\Delta)^{k+1}(v\varphi^2).
\end{equation}
So, by Leibniz formula \eqref{leibniz_formula}
\begin{equation*}
(-\Delta)^{k+1}(v\varphi^2)=(-1)^{k+1}\sum_{|\alpha|=k+1}\frac{(k+1)!}{\alpha!}\sum_{\gamma\leq 2\alpha}\binom{2\alpha}{\gamma}\dif{\gamma}v\,\dif{2\alpha-\gamma}(\varphi^2).
\end{equation*}
Hence, replacing the above identity on \eqref{I_initial} we have
\begin{equation*}
\begin{split}
I&=(-1)^{k+2}\sum_{|\alpha|=k+1}\frac{(k+1)!}{\alpha!}\int_{\TT}v\dif{2\alpha}v \varphi^2 \\
&\quad +(-1)^{k+2}\sum_{|\alpha|=k+1}\frac{(k+1)!}{\alpha!}\sum_{\gamma\leq 2\alpha \atop \gamma\neq 2\alpha}\binom{2\alpha}{\gamma}\int_{\TT}v\dif{\gamma}v\,\dif{2\alpha-\gamma}(\varphi^2)=: I_{1}+I_{2}
 .\end{split}
\end{equation*}
Our task is reduced to estimate the terms $I_1$ and $I_2$. To this end, we present the following auxiliary lemmas.

\begin{lem}\label{lem_I_1}
Let $k\in\mathbb N^*$. We have that
\begin{align*}
I_1=&-\sum_{|\alpha|=k+1}\frac{(k+1)!}{\alpha!}\int_{\TT}|\dif{\alpha}v|^2\varphi^2-\sum_{|\alpha|=k+1}\frac{(k+1)!}{\alpha!}\sum_{\beta\leq \alpha \atop \beta\neq \alpha}\int_{\TT}\binom{\alpha}{\beta}\dif{\alpha}v\dif{\beta}v\dif{\alpha-\beta}(\varphi^2).
\end{align*}
\end{lem}
\begin{proof}
To prove it, it is enough to perform integration by parts and apply Leibniz formula to the term $\int_{\TT}u\dif{2\alpha}u\varphi^2$. More precisely,
\begin{equation}\label{I1_1}
\int_{\TT}v\dif{2\alpha}v\varphi^2=(-1)^{k+1}\int_{\TT}\dif{\alpha}v\dif{\alpha}(v\varphi^2)
\end{equation}
whence
\begin{equation}\label{I1_2}
\dif{\alpha}(v\varphi^2)=\varphi^2\dif{\alpha}v+\sum_{\beta\leq \alpha \atop \beta\neq \alpha}\binom{\alpha}{\beta}\dif{\beta}v\dif{\alpha-\beta}(\varphi^2).
\end{equation}
Putting together \eqref{I1_1} and \eqref{I1_2} and replacing in $I_1$ yields the desired result. 
\end{proof}

\begin{lem}
Let $\alpha\in\mathbb N^d$ be such that $|\alpha|=k+1$. For any multi-index $\beta\leq \alpha$, $\beta\neq\alpha$, we have
\begin{equation*}
\left|\int_{\TT}\dif{\alpha}v\dif{\beta}v\dif{\alpha-\beta}(\varphi^2)\right|\leq C\|\varphi\|_{H^{2k+2}}^2\|v\|_{H^k}^2.
\end{equation*}
\end{lem}
\begin{proof}
Since $|\alpha|=k+1$, we note that $0\leq |\beta|\leq k$. In this way, the most difficult case is to estimate the one that corresponds to $|\alpha|=k+1$ and $|\beta|=k$. We note that in this case we can write $\alpha=\beta+e_i$ where $e_i$ is a canonical vector from the basis of $\mathbb R^d$. Then
\begin{align*}
{\int_{\TT}\dif{\alpha}v\dif{\beta}v\dif{\alpha-\beta}(\varphi^2)}=\int_{\TT}\dif{}_i\dif{\beta}v\dif{\beta}v\dif{}_i(\varphi^2)=-\frac{1}{2}\int_{\TT}|\dif{\beta}v|^2\dif{}_{ii}(\varphi^2).
\end{align*}
Therefore
\begin{equation*}
\left|{\int_{\TT}\dif{\alpha}v\dif{\beta}v\dif{\alpha-\beta}(\varphi^2)}\right|\leq C \|\Delta\varphi^2\|_{L^\infty}\|u\|_{H^k}^2
\end{equation*}
for all $|\beta|=k$. 

Next, we focus in the case when $|\beta|\leq k-1$. Note that $\alpha$ can be written as $\alpha=\alpha^\prime+e_i$ for some canonical vector from the basis of $\mathbb R^{d}$ and such that $|\alpha^\prime|=k$. Arguing as above
\begin{align*}
{\int_{\TT}\dif{\alpha}v\dif{\beta}v\dif{\alpha-\beta}(\varphi^2)}&=\int_{\TT}\dif{}_i\dif{\alpha^\prime}v\dif{\beta}v\dif{\alpha-\beta}(\varphi^2) \\
&=-\int_{\TT}\dif{\alpha^\prime}v\dif{\beta+e_{i}}v\dif{\alpha-\beta}(\varphi^2)-\int_{\TT}\dif{\alpha^\prime}v\dif{\beta}v\dif{\alpha-\beta+e_i}(\varphi^2).
\end{align*}
So
\begin{align*}
\notag&\left|{\int_{\TT}\dif{\alpha}v\dif{\beta}v\dif{\alpha-\beta}(\varphi^2)}\right| \\
&\quad \leq \|\dif{\alpha-\beta}\varphi^2\|_{L^\infty}\|\dif{\alpha^\prime}v\|_{L^2}\|\dif{\beta+e_i}v\|_{L^2}
+\|\dif{\alpha-\beta+e_i}\varphi^2\|_{L^\infty}\|\dif{\alpha^\prime}v\|_{L^2}\|\dif{\beta}v\|_{L^2}.
\end{align*}
for all multi-index $\beta$ such that $|\beta|\leq k-1$. Thus, $|\beta+e_i|\leq k$ and $|\alpha-\beta+e_i|\leq k+2$ and therefore
\begin{align*}
\left|{\int_{\TT}\dif{\alpha}v\dif{\beta}v\dif{\alpha-\beta}(\varphi^2)}\right| \leq C\|\varphi\|^2_{H^{2k+2}}\|v\|_{H^k}^2.
\end{align*}
This ends the proof. 
\end{proof}

\begin{lem}
Let $\alpha\in\mathbb N^d$ be such that $|\alpha|=k+1$. For any multi-index $\gamma\leq 2\alpha$, $\gamma\neq2\alpha$, we have
\begin{equation*}
\left|\int_{\TT} v\dif{\gamma}v\dif{2\alpha-\gamma}(\varphi^2)\right|\leq C\|\varphi\|_{H^{3k+2}}^2\|v\|_{H^k}^2.
\end{equation*}
\end{lem}

\begin{proof}

We separate in several cases according to the length of the multi-index $\gamma$. 

\smallskip
-- \textit{Case 1: $|\gamma|\leq k$.} First, we note that for any $\gamma\in\mathbb N^d$ such that $|\gamma|\leq k$, we have by H\"older and Cauchy-Schwarz inequalities  
\begin{equation*}
\left|\int_{\TT} v\dif{\gamma}v\dif{2\alpha-\gamma}(\varphi^2)\right|\leq \|\dif{2\alpha-\gamma}\varphi^2\|_{L^\infty} \|v\|_{L^2}\|\dif{\gamma}v\|_{L^2}\leq C\|\varphi\|_{H^{3k+2}}^2\|v\|_{H^k}^2.
\end{equation*}

\smallskip

-- \textit{Case 2: $k+1\leq|\gamma|\leq 2k$.} Let $m=|\gamma|$, we can split $\gamma=\gamma_1+\gamma_2$ with $|\gamma_1|=k$ and $|\gamma_2|=m-k\geq 1$, so by Leibniz formula
\begin{align*}
\int_{\TT} v\dif{\gamma}v\dif{2\alpha-\gamma}(\varphi^2) & = \int_{\TT} v\dif{\gamma_1}(\dif{\gamma_2} v)\dif{2\alpha-\gamma}(\varphi^2)=(-1)^{|\gamma_1|}\int_{\TT}\dif{\gamma_2}v\,\dif{\gamma_1}\left(v\dif{2\alpha-\gamma}(\varphi)^2\right) \\
&=(-1)^{k}\sum_{\beta\leq \gamma_1}\binom{\gamma_1}{\beta}\int_{\TT} \dif{\gamma_2}v\, \dif{\beta}v \dif{2\alpha-\gamma+\gamma_1-\beta}(\varphi^2).
\end{align*}
We note that $|\gamma_2|\leq k$, $|\beta|\leq |\gamma_1|=k$ and $|2\alpha-\gamma+\gamma_1-\beta|\leq 2k+1$, so by direct computations using Cauchy-Schwarz inequality we get
\begin{align*}
\left|\int_{\TT} v\dif{\gamma}v\dif{2\alpha-\gamma}(\varphi^2)\right|&\leq \sum_{\beta\leq \gamma_1}\binom{\gamma_1}{\beta} \|\dif{2\alpha-\gamma+\gamma_1-\beta}\varphi^2\|_{L^\infty} \|\dif{\gamma_2}v\|_{L^2}\|\dif{\beta}v\|_{L^2}\\
&\leq C\|\varphi\|_{H^{3k+1}}^2\|v\|_{H^k}^2.
\end{align*}

\smallskip

-- \textit{Case 3: $\gamma\leq 2\alpha$, $\gamma\neq 2\alpha$, and $|\gamma|=2k+1$}. We first observe that if $\gamma\leq 2\alpha$ and $|\gamma|=2k+2$, then necessarily $\gamma=2\alpha$, so the condition $\gamma\neq 2\alpha$ excludes that case. Now, since $\gamma\leq 2\alpha$ and $|\gamma|=2k+1$, it follows that $|2\alpha-\gamma|=1$ and hence $\gamma=2\alpha-e_i$ for some $i\in\inter{1,d}$ with $\alpha_i\geq 1$, where $e_i$ is the $i$-th canonical basis vector of $\mathbb{R}^d$. Thus, we may write $\gamma = \zeta + \xi$ with $|\zeta|=k+1$ and $|\xi|=k$ (for instance, $\zeta = \alpha$ and $\xi = \alpha - e_i$). The specific decomposition does not matter in what follows.

By Leibniz formula
\begin{align}\notag
\int_{\TT} v\dif{\gamma}v\dif{2\alpha-\gamma}(\varphi^2)&=\int_{\TT} v\dif{\zeta}(\dif{\xi}v)\dif{2\alpha-\zeta-\xi}(\varphi^2) = (-1)^{|\zeta|} \int_{\TT}\dif{\xi}v\dif{\zeta}\left(v\dif{2\alpha-\zeta-\xi}(\varphi^2)\right) \\ \label{iden_zeta_xi}
&= (-1)^{k+1} \sum_{\beta\leq \zeta} \binom{\zeta}{\beta} \int_{\TT}\dif{\xi}v\dif{\beta}v\dif{2\alpha-\xi-\beta}(\varphi^2).
\end{align}
On the other hand
\begin{align}\notag
\int_{\TT} v\dif{\gamma}v\dif{2\alpha-\gamma}(\varphi^2)&=\int_{\TT} v\dif{\xi}(\dif{\zeta}v)\dif{2\alpha-\zeta-\xi}(\varphi^2) = (-1)^{|\xi|} \int_{\TT}\dif{\zeta}v\dif{\xi}\left(v\dif{2\alpha-\zeta-\xi}(\varphi^2)\right) \\ \label{iden_xi_zeta}
&= (-1)^{k} \sum_{\beta\leq \xi} \binom{\xi}{\beta} \int_{\TT}\dif{\zeta}v\dif{\beta}v\dif{2\alpha-\zeta-\beta}(\varphi^2).
\end{align}
Adding up \eqref{iden_zeta_xi} and \eqref{iden_xi_zeta}, we see that the terms corresponding to $\beta=\zeta$ and $\beta=\xi$ cancel out, leading us to 
\begin{align}\label{eq:iden_dupl}
&\int_{\TT} v\dif{\gamma}v\dif{2\alpha-\gamma}(\varphi^2) = \frac{(-1)^k}{2}\left(-\sum_{\beta\leq \zeta \atop \beta\neq \zeta}I_{\beta}(\zeta,\xi)+ \sum_{\beta\leq \xi \atop\beta\neq \xi}I_{\beta}(\xi,\zeta)\right),
\end{align}
where we have used the notation
\begin{equation*}
I_{\beta}(\Psi,\Upsilon):= \binom{\Psi}{\beta}\int_{\TT}\dif{\Upsilon}v\dif{\beta}v\dif{2\alpha-\Upsilon-\beta}(\varphi^2).
\end{equation*}
We observe that the first term in the right-hand side of \eqref{eq:iden_dupl} (namely, the one containing $I_{\beta}(\zeta,\xi)$) can be bounded easily by following Case 1 above since $|\xi|\leq k$ and $\beta\leq \zeta$ with $\beta\neq \zeta$ implies that $|\beta|\leq k$, so
\begin{equation}\label{eq:est_i_zeta_xi}
\left|\sum_{\beta\leq \zeta \atop \beta \neq \zeta}I_{\beta}(\zeta,\xi)\right|\leq C \|\varphi\|^2_{H^{2k+2}}\|v\|^2_{H^k}. 
\end{equation}
For the last term in \eqref{eq:iden_dupl}, we observe that $|\zeta|=k+1$ but $\beta\leq \xi$ with $\beta\neq \xi$ imply that $|\beta|\leq k-1$, which falls into Case 2 above where we can pass first-order derivatives to the lower order terms. With some direct computations and arguing as in Case 2, we can prove that
\begin{equation}\label{eq:est_i_xi_zeta}
\left|\sum_{\beta\leq \xi \atop \beta \neq \xi}I_{\beta}(\xi,\zeta)\right|\leq C\|\varphi\|^2_{H^{2k+2}}\|v\|_{H^k}^2.
\end{equation}
Thus, using \eqref{eq:est_i_zeta_xi}--\eqref{eq:est_i_xi_zeta} in \eqref{eq:iden_dupl} yield that 
\begin{equation*}
\left|\int_{\TT} v\dif{\gamma}v\dif{2\alpha-\gamma}(\varphi^2)\right|\leq C\|\varphi\|^2_{H^{2k+2}}\|v\|^2_{H^k}.
\end{equation*}
Lastly, combining Cases 1 to 3 yields the estimate for the term $I$. 
\end{proof}
Replacing $v$ by $w$ leads to the same estimate, therefore the proof is complete.
\end{proof}
\subsection{Proof of Lemma \ref{opt}}\label{proof_aux_lema}
\begin{proof}
	We split the proof into two cases.
	
	\noindent\textbf{Case 1}: $b \le c$.
	Choose $\epsilon \in (0,1]$ by balancing the two terms
	$\epsilon 	= \Big( \frac{b}{c} \Big)^{N/(N^2+1)} \le 1.
	$ For this value of $\epsilon$ and choosing $\theta=\frac{1}{N^2+1}$, we compute
	$
	\epsilon^{-N} b 
	= b^{1-\frac{N^2}{N^2+1}} c^{\frac{N^2}{N^2+1}}
	= b^{\theta} c^{1-\theta},$ $
	\epsilon^{1/N} c
	= b^{\theta} c^{1-\theta}.
	$
	Thus both terms coincide, and using the hypothesis, we obtain
	$a 
	\le M\Big( \epsilon^{-N} b + \epsilon^{1/N} c \Big)
	= 2M\, b^{\theta} c^{1-\theta}.
	$	Hence the desired inequality holds with $C_0 \ge 2M$.
	
	\smallskip
	
	\noindent\textbf{Case 2}: $b > c$.
	From the assumption $a \le C_1 c$, and since $b>c$, we have
	$b^{\theta} c^{1-\theta}
	= c\,(b/c)^{\theta} > c,$
	so it follows that
	$a \le C_1 c \le C_1\, b^{\theta} c^{1-\theta}.
	$ Thus the desired inequality holds with $C_0 \ge C_1$.
	Combining both cases, take $C_0 := \max\{2M, C_1\}$, which yields the result.
\end{proof}

\bibliographystyle{alpha}
\small{\bibliography{bib_nonlinear_coupled}}


\medskip
\medskip
\medskip
\medskip

\begin{flushleft}

\bigskip

\textbf{Víctor Hernández-Santamaría}\\
Departamento de Matem\'aticas, Facultad de Ciencias\\
Universidad Nacional Autónoma de México \\
Circuito Exterior, C.U.\\
04510, Coyoacán, CDMX, Mexico\\
\texttt{victor.santamaria@ciencias.unam.mx}

\bigskip
\bigskip
	
	\textbf{Subrata Majumdar and Luz de Teresa}\\
	Instituto de Matemáticas\\
	Universidad Nacional Autónoma de México \\
	Circuito Exterior, Ciudad Universitaria\\
	04510 Coyoacán, Ciudad de México, México\\
	\texttt{subrata.majumdar@im.unam.mx}\\
	\texttt{ldeteresa@im.unam.mx}
\end{flushleft}

\end{document}